\newtheorem{thm}{Theorem}[section]
\newtheorem{lem}[thm]{Lemma}
\newtheorem{prop}[thm]{Proposition}
\newtheorem{rem}[thm]{Remark}
\numberwithin{equation}{section}
\begin{document}
\title{A stabilized second order exponential time differencing multistep method for thin film growth model without slope selection}

\author{
Wenbin Chen\thanks{
Shanghai Key Laboratory 
of Mathematics for Nonlinear Sciences, School of Mathematical Sciences; Fudan University, Shanghai, China 200433 ({\tt wbchen@fudan.edu.cn})}
\and
Weijia Li\thanks{
School of Mathematical Sciences; Fudan University, Shanghai, China 200433 ({\tt 15110180013@fudan.edu.cn})}
\and
Zhiwen Luo\thanks{
School of Mathematical Sciences; Fudan University, Shanghai, China 200433 ({\tt 15210180015@fudan.edu.cn})}
\and
Cheng Wang\thanks{Mathematics Department; University of Massachusetts; North Dartmouth, MA 02747, USA ({\tt corresponding author: cwang1@umassd.edu})}
\and
Xiaoming Wang\thanks{Department of Mathematics, Southern University of Science and Technology, Shenzhen, China 518055, and Fudan University, Shanghai, China 200433, and Florida State University, Tallahassee, FL 32306, USA ({\tt wxm@math.fsu.edu})}
			}

\maketitle

	\begin{abstract}
In this paper, a stabilized second order in time accurate linear exponential time differencing (ETD) scheme for the no-slope-selection thin film growth model is presented. An artificial stabilizing term $A\tau^2\frac{\partial\Delta^2 u}{\partial t}$ is added to the physical model to achieve energy stability, with ETD-based multi-step approximations and Fourier collocation spectral method applied in the time integral and spatial discretization of the evolution equation, respectively. Long time energy stability and detailed $\ell^{\infty}(0,T; \ell^2)$ error analysis are provided based on the energy method, with a careful estimate of the aliasing error. In addition, numerical experiments are presented to demonstrate the energy decay and convergence rate.
	\end{abstract}

	\begin{keywords}
epitaxial thin film growth, exponential time differencing, long time energy stability, convergence analysis, second order scheme	
	\end{keywords}

	\begin{AMS}
65M12, 65M70, 65Z05	
	\end{AMS}

\pagestyle{myheadings}
\thispagestyle{plain}
\markboth{W.~CHEN, W.~LI, Z.~LUO, C.~WANG AND X.~WANG}{sETDMs2 for no-slope-selection thin film model}

\section{Introduction}
Consider the continuum model of no-slope-selection thin film epitaxial growth, which is the $L^2$ gradient flow of the following energy functional:
\begin{equation}\label{MBE energy}
E(u) = \int_\Omega \frac{\varepsilon^2}{2}|\Delta u|^2 -\frac{1}{2}\ln (1+|\nabla u|^2) ~\mbox{d}\mathbf{x},
\end{equation}
where $\Omega = [0,L]^2$, $u:\Omega\times [0,T]\rightarrow \mathbb{R}$ is a scaled height function of thin film with periodic boundary condition, and $\varepsilon>0$ is a constant. Here the first term in $E(u)$ represents the surface diffusion, and the logarithm term models the Ehrlich-Schowoebel (ES) effect which describes the effect of kinetic asymmetry in the adatom attachment-detachment; see \cite{ehrlich1966atomic,li2006high,li2003thin, schwoebel1969step}. Consequently, the growth equation is the $L^2$ gradient flow of \eqref{MBE energy}:
\begin{align}\label{MBE}
\frac{\partial u}{\partial t} = -\varepsilon^2\Delta^2 u - \nabla\cdot\left(\frac{\nabla u}{1+|\nabla u|^2}\right), ~x\in\Omega, ~t\in (0,T].
\end{align}
Taking the inner product with~\eqref{MBE} by $\frac{\partial u}{\partial t}$, we obtain the energy decay property for the continuous system:
\begin{align*}
\frac{\mbox{d} E}{\mbox{d} t} = - \| u_t \|_{L^2}^2.
\end{align*}
The equation \eqref{MBE} is referred to as the no-slope-selection (NSS) equation, since \eqref{MBE} predicts an unbounded mound slope $m(t) = O(t^{\frac{1}{4}})$ \cite{golubovic1997interfacial,li2003thin}. On the other hand, the slope-selection (SS) case refers to the case when the logarithm term $- \nabla\cdot\left(\frac{\nabla u}{1+|\nabla u|^2}\right)$ in \eqref{MBE} is replaced by $\nabla\cdot(|\nabla u|^2\nabla u)$, which predicts that mound-like or pyramid structures in the surface profile tend to have a uniform, constant mound slope \cite{li2003thin}.

The solution $u(\mathbf{x},t)$ is mass conservative, i.e.,
\begin{align}\label{mass conservation}
\int_{\Omega} u(\mathbf{x},t)~\mbox{d}\mathbf{x} = \int_{\Omega} u(\mathbf{x},0)~\mbox{d}\mathbf{x}, ~ t>0 ,
\end{align}
due to the periodic boundary condition. For simplicity of presentation, herein we assume $u$ to have zero mean-value. Scaling laws that characterize the surface coarsening during the film growth have been a physically interesting problem; see \cite{golubovic1997interfacial, kohn2003upper, li2006high, li2004epitaxial, moldovan2000interfacial}. As for the continuum model, the well-posedness of the initial-boundary-value problem for both SS and NSS equation has been given by \cite{li2006high, li2003thin} through the perturbation analysis and Galerkin spectral approximations. Li and Liu \cite{li2004epitaxial} proved that the minimum energy scales as $\log \varepsilon$ for small $\varepsilon>0$, and the average energy is bounded below by $O(-\log t)$ for large $t$. This implies that long time energy stability is required to simulate the coarsening process. 

One popular way to construct energy stable numerical scheme is to split the energy functional into convex and concave parts, then apply implicit and explicit time stepping algorithms to the corresponding terms~\cite{eyre1998unconditionally}, respectively; see the first such numerical scheme in~\cite{wang2010unconditionally} for the molecular epitaxy growth model. Since then, there have been various works dedicated to deriving high order and energy stable schemes under this idea, such as \cite{chen2012linear, chen2014linear, chen2012mixed, li2017convergence, qiao2015error, qiao2017error, shen2012second}. In particular, a first order in time linear scheme was proposed in~\cite{chen2012linear} to solve the NSS equation, with the energy stability established. On the other hand, the concave term turns out to be nonlinear in this approach, and there has been a well-known difficulty of the convex-splitting approach to construct unconditionally stable higher-order schemes for a nonlinear concave term. Many efforts have been devoted to overcome this subtle difficulty, such as introducing an artificial stabilizing term in the growth equation to balance the explicit treatment of the nonlinear term; see \cite{feng2017second, li2016characterizing, li2018second, xu2006stability}. In addition, there have been some other interesting approaches for the stability of a numerically modified energy functional, such as the invariant energy quadratization (IEQ)  \cite{yang2017numerical} and the scalar auxiliary variable (SAV) methods   \cite{shen2018scalar}.

Other than these direct approaches to establish the energy stability, some other ideas have been reported in recent years to obtain higher order temporal accuracy with explicit treatment of nonlinear terms, such as the time exponential time differencing (ETD) approach. In general, an exact integration of the linear part of the NSS equation is involved in the ETD-based scheme, followed by multi-step explicit approximation of the temporal integral of the nonlinear term~\cite{Suli14, Beylkin98, Cox02, Hochbruck10, Hochbruck11}. An application of such an idea to various gradient models has been reported in recent works~\cite{Wang18, Ju17, Ju14, Ju15a, Ju15b, wangx16, zhu16}, with the high order accuracy and preservation of the exponential behavior observed in the numerical experiments. At the theoretical side, some related convergence analyses have also been reported, while a rigorous energy stability analysis has only been provided for the first order scheme~\cite{Ju17}. For the second and higher order numerical schemes, a theoretical justification of the energy bound has not been available.


In this paper, we work on a second order in time accurate ETD-based scheme, with Fourier pseudo-spectral approximation in space, and the energy stability analysis is established at a theoretical level. We use an alternate approach to overcome the above-mentioned difficulty, instead of applying convex splitting to the energy functional \eqref{MBE energy}. In more details, an artificial stabilizing term $A\tau^2\frac{\partial\Delta^2 u}{\partial t}$ is added in the growth equation, where $A$ is a positive constant and $\tau$ is the time step size. Also, we apply a linear Lagrange approximation to the nonlinear term as in \cite{Ju17}.  
This approach enables us to perform a careful energy estimate, so that a decay property for a modified discrete energy functional is proved. This in turn leads to a uniform in time bound for the original energy functional. In our knowledge, this is the first such result for a second order ETD-based scheme, with the primary difficulty focused on the explicit extrapolation for the nonlinear terms. Moreover, we provide a novel convergence analysis for the proposed scheme. Instead of analyzing the operator form of the numerical error function, here we start from the continuous in time ODE system satisfied by the error function, which is derived from the corresponding equations of the numerical solution. With a careful treatment of the aliasing error and $H^3$ estimate of the numerical solution, we are able to derive an $\ell^{\infty}(0,T; \ell^2)$ error estimate for the proposed scheme.

There have been quite a few physically interesting quantities that may be obtained from the solutions of the NSS equation~\eqref{MBE}, such as the energy, average surface roughness and the average slope. A theoretical analysis in~\cite{li2004epitaxial} has implied a lower bound for the energy dissipation as the order of $O (- \ln t)$, and an upper bound for the average roughness as the order of $O (t^{1/2})$, which in turn implies an $O (t^{1/4})$ order for the average slope. In particular, the  $O (- \ln t)$ energy dissipation scale leads to a long time scale nature of the NSS equation~\eqref{MBE}, in comparison with the $O (t^{-1/3}$ scale for the slope-selection version and the standard Cahn-Hilliard model. In addition, a lower bound for the energy, estimated as $\gamma:= \frac{L^2}{2}  \left( \ln (4 \varepsilon^2 L^{-2} ) - 4 \varepsilon^2 \pi^2 + 1 \right)$ as derived in~\cite{chen2012linear, chen2014linear, wang2010unconditionally}, indicates an intuitive $O (\varepsilon^{-2})$ law for the saturation time scale for $\varepsilon \ll\min L$, because of the $O (- \ln t)$ energy dissipation scale. All these facts have demonstrated the necessity of energy stable numerical approach to accurately capture the physically interesting quantities, since the energy stability turns out to be a global in time nature for the numerical scheme. In the extensive numerical experiments, the proposed stabilized ETD scheme has produced highly accurate solutions, which obtain the long time asymptotic growth rate of the surface roughness and average slope with relative accuracy within $4\%$.

The rest of this article is organized as follows. In Section~\ref{sec: sETDMs2 scheme} we present the fully discrete numerical scheme. The numerical energy stability is proved in Section~\ref{sec: sETDMs2 stab}, followed by $\ell^{\infty}(0,T; H_h^2)$ and $\ell^{\infty}(0,T; H_h^3)$ bounds of the numerical solution. Subsequently, an $\ell^{\infty}(0,T; \ell^2)$ error analysis is given by Section~\ref{sec: error analysis}, consisting of two lemmas concerning the error of the nonlinear term. In addition, numerical experiments are provided in Section~\ref{sec: numerical results}, including temporal convergence test and simulation results of the scaling laws for energy, average surface roughness and average slope. Finally, some concluding remarks are given by Section~\ref{sec:conclusion}.

\section{The stabilized second order in time ETD mulstistep scheme (sEDTMs2)}\label{sec: sETDMs2 scheme}

Some definitions in \cite{adams2003} are recalled. Consider $\Omega = [0,L]^d$. We define $W^{m,p}(\Omega):=\{v\in L^p(\Omega)\mid D^{\alpha}v\in L^p(\Omega)\text{ for }0\leq |\alpha|\leq m\}$, where $\alpha = (\alpha_1,\ldots,\alpha_d)$ is a $d$-tuple of non-negative integers with $|\alpha| = \sum_{i=1}^d\alpha_i$, and $D_{\alpha} = \prod_{i=1}^d D_{x_i}^{\alpha_i}$. For simplicity, we denote $\|\cdot\|_{W^{m,p}(\Omega)}$ as $\|\cdot\|_{m,p}$, and the related semi-norm as $|\cdot|_{m,p}$. In particular, if $p=2$, we set $W^{m,p}(\Omega)$ as $H^m(\Omega)$, and $\|\cdot\|_{m,2}$ as $\|\cdot\|_{H^m}$.

In this paper, we focus on the case when $d=2$ and follow the notations used in collocation spectral method; see \cite{canuto2006spectrala, canuto2007spectralb, gottlieb1977numerical, gottlieb2012stability, Ju17, shen2011spectral}. Let $N$ be a positive integer, $\Omega_\mathcal{N}$ be a uniform $2N\times 2N$ mesh on $\Omega$, with $(2N+1)^2$ grid points $(x_i,y_j)$, where $x_i = ih$, $y_j = jh$ with $h:=\frac{L}{2N}$, $0\leq i,j\leq 2N$. Denote $u_e$ as the exact solution of \eqref{MBE}, $u(t) = u_e(\cdot,t)|_{\Omega_{\mathcal{N}}}$ be the restriction of the exact solution on $\Omega_{\mathcal{N}}$, $\tau$ be the time step size $\frac{T}{N_t}$, and $t_i = i\tau$ for $0\leq i \leq N_t$. Let $H^m_{per}(\Omega)=\{v\in H^m(\Omega)\mid v\text{ is periodic} \}$, $\mathcal{M}^{\mathcal{N}}$ be the space of 2D periodic grid functions on $\Omega_\mathcal{N}$, and $\mathcal{B}^{N}$ be the space of trigonometric polynomials in $x$ and $y$ of degree up to $N$. In this paper, we denote $C$ one generic constant which may depend on $\varepsilon$, the solution $u$, the initial value $u_0$ and time $T$, but is independent of the mesh size $h$ and time step size $\tau$.

Let us firstly recall the following Berstein inequality introduced in \cite[p. 33, Lemma 2.5]{shen2011spectral}:
\begin{lem}\label{inverse estimate}
For any $u\in \mathcal{B}^{N}$ and $1\leq p\leq\infty$, we have
\begin{equation}
\|\partial_x^m u\|_{L^p(\Omega)}\leq CN^{m}\|u\|_{L^p(\Omega)}, \quad m\geq 1.
\label{inverse estimate-0}
\end{equation}
\end{lem}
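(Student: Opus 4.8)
To establish the stated Bernstein inequality, the plan is to represent $u$ as a convolution against a fixed trigonometric reproducing kernel and then shift the $x$-derivatives onto that kernel. Since every $u\in\mathcal{B}^N$ has all of its $x$-Fourier modes supported on frequencies of modulus at most $N$, one may fix a one-dimensional, $L$-periodic trigonometric kernel $V_N$ of degree comparable to $N$ — for instance the de la Vall\'ee Poussin kernel, a suitable combination of Fej\'er kernels of orders $N$ and $2N$ — whose Fourier coefficients satisfy $\widehat{V_N}(k)=1$ for $|k|\le N$. Then, viewing $y$ as a spectator,
\[
u(x,y)=\int_0^L V_N(x-\xi)\,u(\xi,y)\,\mathrm{d}\xi ,
\]
and differentiating $m$ times in $x$ under the integral sign gives $\partial_x^m u(x,y)=\int_0^L V_N^{(m)}(x-\xi)\,u(\xi,y)\,\mathrm{d}\xi$.

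Next I would apply Young's convolution inequality in the $x$-variable combined with Fubini in $y$ (the case $p=\infty$ being immediate from a pointwise bound) to obtain, uniformly in $1\le p\le\infty$,
\[
\|\partial_x^m u\|_{L^p(\Omega)}\le \|V_N^{(m)}\|_{L^1([0,L])}\,\|u\|_{L^p(\Omega)} .
\]
The claim is thereby reduced to the purely one-dimensional kernel bound $\|V_N^{(m)}\|_{L^1([0,L])}\le C N^m$, with $C=C(m)$.

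The crux is this last estimate, and it cannot be obtained by the naive Parseval route: the bound $\|V_N^{(m)}\|_{L^1}\le\sqrt{L}\,\|V_N^{(m)}\|_{L^2}=\sqrt{L}\,\bigl(\sum_k|k|^{2m}|\widehat{V_N}(k)|^2\bigr)^{1/2}$ only yields $CN^{m+1/2}$, losing a factor $N^{1/2}$, so one genuinely needs the pointwise decay of the kernel. Using the closed form of $V_N$ as a combination of $\bigl(\sin(Mx/2)/\sin(x/2)\bigr)^2$-type expressions with $M$ of order $N$, repeated differentiation and elementary trigonometric bounds give $|V_N^{(m)}(x)|\le C_m\min\{N^{m+1},\,N^{m-1}|x|^{-2}\}$ for $0<|x|\le L/2$; integrating the first bound over $|x|\lesssim 1/N$ contributes $\lesssim N^{m+1}N^{-1}=N^m$, and integrating the second over $1/N\lesssim|x|\lesssim L$ contributes $\lesssim N^{m-1}N=N^m$, which is exactly the desired bound. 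I expect this kernel-decay computation to be the only genuinely non-routine step; everything else is bookkeeping with Fourier series and Minkowski's inequality.

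As a shortcut one may instead first reduce to $m=1$: since $\partial_x^{m-1}u\in\mathcal{B}^N$ whenever $u\in\mathcal{B}^N$, a one-line induction upgrades the case $m=1$ to general $m$ at the cost of replacing $C$ by $C^m$. For $m=1$ the sharp inequality $\|\partial_x u\|_{L^p}\le N\|u\|_{L^p}$ follows from M.~Riesz's interpolation identity, which expresses $\partial_x u(x)$ as a finite combination $\sum_k c_k\,u(x+\theta_k)$ of translates with $\sum_k|c_k|=N$; the triangle inequality in $L^p$ then finishes the proof with no kernel estimate at all. Either route delivers the stated bound, and the proof is elementary modulo the one classical ingredient (the kernel-decay estimate, respectively the verification of Riesz's identity).
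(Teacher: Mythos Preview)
The paper does not give a proof of this lemma at all: it is stated as a recalled fact with a citation to \cite[p.~33, Lemma~2.5]{shen2011spectral}. Your proposal therefore goes well beyond what the paper supplies, and it is correct. Both routes you sketch are standard and valid: the de~la~Vall\'ee~Poussin reproducing-kernel argument plus Young's inequality, reduced to the $L^1$ bound $\|V_N^{(m)}\|_{L^1}\le C_m N^m$, is the usual textbook proof, and your pointwise splitting estimate $|V_N^{(m)}(x)|\lesssim\min\{N^{m+1},N^{m-1}|x|^{-2}\}$ is exactly the right way to get that $L^1$ bound. The alternative via M.~Riesz's interpolation identity is also legitimate and gives the sharp constant in the case $m=1$; just note that on a period-$L$ torus the identity scales accordingly, so the sharp constant is $2\pi N/L$ rather than $N$, which is of course absorbed into the generic $C$ in the statement. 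Either way, the inductive reduction to $m=1$ is clean and costs nothing. There is no gap in your argument.
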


Now we introduce an interpolation operator $\mathcal{I}_N$ onto $\mathcal{B}^{N}$ that reserves the function value on $(2N+1)^2$ grid points, i.e., $(\mathcal{I}_Nf)(x_i, y_j) = f(x_i, y_j)$ for $0\leq i,j\leq 2N$:
\begin{align}\label{interpolation operator}
(\mathcal{I}_N f)(x,y) = \sum_{k,l = -N+1}^{N}(\hat{f}_c)_{k,l}\exp\left(\frac{2\pi\mathrm{i}}{L}(kx+ly)\right), \quad \mbox{with $\mathrm{i} = \sqrt{-1}$} ,
\end{align}
where the coefficients $\{(\hat{f}_c)_{k,l}\}$ are given by the discrete Fourier transform of the $4N^2$ grid points:
\begin{align}
(\hat{f}_c)_{k,l} = \frac{1}{4N^2}\sum_{i,j = 1}^{2N} f(x_i,y_j)\exp\left(-\frac{2\pi\mathrm{i}}{L}(kx_i+ly_j)\right).
\end{align}
For any $f\in \mathcal{M}^{\mathcal{N}}$, denote $\tilde{f} = \mathcal{I}_N f$ as the continuous extension of $f$. When $f$ and $\partial^{\alpha}f$ ($|\alpha|\leq m$) are continuous and periodic on $\Omega$, $\mathcal{I}_N$ has the following approximation property (\cite[Theorem 1.2, p. 72]{canuto1982approximation}):
\begin{equation}\label{I_N convergence}
\|\partial^k f - \partial^k \mathcal{I}_Nf\|_{L^2} \leq Ch^{m-k}\|f\|_{H^m}, ~\text{for }0\leq k\leq m, ~m>\frac{d}{2},
\end{equation}
for dimension $d$. Also, the following $H^m$ bound of $\mathcal{I}_N$ is excerpted in \cite[Lemma 1]{gottlieb2012stability}:
\begin{lem}\label{I_N Hm bound}
For any $\varphi\in\mathcal{B}^{2N}$ in dimension $d$, we have
\begin{align}
\|\mathcal{I}_N\varphi\|_{H^k} \leq (\sqrt{2})^d\|\varphi\|_{H^k}, ~k\geq 0.
\end{align}
\end{lem}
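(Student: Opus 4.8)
The plan is to prove the bound $\|\mathcal{I}_N\varphi\|_{H^k}\le(\sqrt{2})^d\|\varphi\|_{H^k}$ by tracking exactly how $\mathcal{I}_N$ aliases Fourier modes and then exploiting the fact that aliasing can only move spectral weight from higher frequencies onto lower ones. First I would fix the Fourier representation $\varphi=\sum_{\mathbf{m}}\hat{\varphi}_{\mathbf{m}}\exp(\tfrac{2\pi\mathrm{i}}{L}\mathbf{m}\cdot\mathbf{x})$ with $\mathbf{m}$ ranging over $\{-2N+1,\dots,2N\}^d$, adapted to the $2N$-point-per-direction grid underlying the definition of $\mathcal{I}_N$. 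Substituting this into the discrete Fourier transform defining $\mathcal{I}_N$ and using the elementary identity that $\sum_{p=1}^{2N}\exp(\tfrac{2\pi\mathrm{i}}{2N}(m-m')p)$ equals $2N$ when $m\equiv m'\pmod{2N}$ and vanishes otherwise, one obtains the aliasing formula
\begin{equation*}
\mathcal{I}_N\varphi = \sum_{\mathbf{m}'\in\{-N+1,\dots,N\}^d}\Big(\sum_{\mathbf{m}\equiv\mathbf{m}'}\hat{\varphi}_{\mathbf{m}}\Big)\exp\Big(\tfrac{2\pi\mathrm{i}}{L}\mathbf{m}'\cdot\mathbf{x}\Big),
\end{equation*}
where the inner sum runs over those $\mathbf{m}$ in the index set of $\varphi$ that agree with $\mathbf{m}'$ modulo $2N$ in every component. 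Since each component $m_j'\in\{-N+1,\dots,N\}$ has exactly two preimages in $\{-2N+1,\dots,2N\}$, namely $m_j'$ and $m_j'\mp 2N$, this aliasing sum has precisely $2^d$ terms, and every such preimage $\mathbf{m}$ satisfies $|m_j|\ge|m_j'|$ for each $j$.

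Next I would express the $H^k$ norm spectrally: by orthogonality of the exponentials and of all their derivatives in $L^2(\Omega)$, one has $\|\psi\|_{H^k}^2 = |\Omega|\sum_{\mathbf{m}}w(\mathbf{m})\,|\hat{\psi}_{\mathbf{m}}|^2$ for every trigonometric polynomial $\psi=\sum_{\mathbf{m}}\hat{\psi}_{\mathbf{m}}\exp(\tfrac{2\pi\mathrm{i}}{L}\mathbf{m}\cdot\mathbf{x})$, with the weight $w(\mathbf{m})=\sum_{|\alpha|\le k}(\tfrac{2\pi}{L})^{2|\alpha|}\prod_{j}m_j^{2\alpha_j}$. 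This $w$ is a sum of monomials in the variables $m_j^2$ with positive coefficients, hence is nondecreasing in each $|m_j|$; combined with the componentwise inequality $|m_j|\ge|m_j'|$ from the previous step, this gives $w(\mathbf{m})\ge w(\mathbf{m}')$ for every preimage $\mathbf{m}$ of $\mathbf{m}'$. These two observations — the exact count $2^d$ of aliasing terms and the monotonicity of the frequency weight under aliasing — are all that the estimate needs.

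Finally I would assemble the pieces. The Cauchy--Schwarz inequality applied to the $2^d$-term aliasing sum yields $\big|\sum_{\mathbf{m}\equiv\mathbf{m}'}\hat{\varphi}_{\mathbf{m}}\big|^2\le 2^d\sum_{\mathbf{m}\equiv\mathbf{m}'}|\hat{\varphi}_{\mathbf{m}}|^2$, and therefore
\begin{equation*}
\|\mathcal{I}_N\varphi\|_{H^k}^2 = |\Omega|\sum_{\mathbf{m}'}w(\mathbf{m}')\Big|\sum_{\mathbf{m}\equiv\mathbf{m}'}\hat{\varphi}_{\mathbf{m}}\Big|^2 \le 2^d|\Omega|\sum_{\mathbf{m}'}\sum_{\mathbf{m}\equiv\mathbf{m}'}w(\mathbf{m})\,|\hat{\varphi}_{\mathbf{m}}|^2 = 2^d|\Omega|\sum_{\mathbf{m}}w(\mathbf{m})\,|\hat{\varphi}_{\mathbf{m}}|^2 = 2^d\|\varphi\|_{H^k}^2,
\end{equation*}
where $w(\mathbf{m}')\le w(\mathbf{m})$ was used in the middle step and, in the penultimate equality, that the pairs $(\mathbf{m}',\mathbf{m})$ with $\mathbf{m}\equiv\mathbf{m}'$ run over the index set of $\varphi$ exactly once. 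Taking square roots gives the claim. I expect the only delicate part to be the combinatorial bookkeeping of the aliasing map — correctly identifying the residue classes, handling the extreme modes $m_j=\pm N$, and verifying the monotonicity of $w$ — after which the chain of (in)equalities is essentially forced; in any event the statement is precisely \cite[Lemma 1]{gottlieb2012stability}, so it may simply be invoked.
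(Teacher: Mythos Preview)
The paper does not actually prove this lemma: it is stated without proof and attributed to \cite[Lemma~1]{gottlieb2012stability}. Your proposal supplies the standard aliasing argument---identify the $2^d$ modes of $\varphi\in\mathcal{B}^{2N}$ that fold onto each output mode of $\mathcal{I}_N$, use Cauchy--Schwarz on that sum, and exploit the monotonicity of the $H^k$ frequency weight under aliasing---which is essentially the proof given in the cited reference, and you correctly note at the end that one may simply invoke that citation. So the proposal is correct and fully consistent with the paper's treatment (indeed more detailed than it).
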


Given $\mathcal{I}_Nf$, the discrete spatial partial derivatives can be defined as:
\begin{align*}
(D_x f)_{i,j} &= \sum_{k,l = -N+1}^{N}\frac{2k\pi\mathrm{i}}{L}(\hat{f}_c)_{k,l}\exp\left(\frac{2\pi\mathrm{i}}{L}(kx_i+ly_j)\right),\\
(D^2_x f)_{i,j} &= \sum_{k,l = -N+1}^{N} -\frac{4k^2\pi^2}{L^2}(\hat{f}_c)_{k,l}\exp\left(\frac{2\pi\mathrm{i}}{L}(kx_i+ly_j)\right).
\end{align*}
Similarly we have $D_y$ and $D^2_y$. For any $f, ~g\in \mathcal{M}^\mathcal{N}$, and $\mathbf{f} = (f_1,f_2)^T, ~\mathbf{g} = (g_1,g_2)^T\in\mathcal{M}^\mathcal{N}\times\mathcal{M}^\mathcal{N}$, we can define the discrete gradient, divergence and Laplace operators:
\begin{equation*}
\nabla_{\mathcal{N}}f=\left(
\begin{matrix}
D_x f\\
D_y f\\
\end{matrix}
\right), \quad\nabla_{\mathcal{N}}\cdot f = D_x f^1+D_y f^2, \quad\Delta_{\mathcal{N}}f=D^2_x f+D^2_y f.
\end{equation*}
Also, to measure the discrete differentiation operators defined above, we introduce the discrete $L^2$ (denoted as $\ell^2$) inner product $(\cdot,\cdot)_{\mathcal{N}}$ and norm $\|\cdot\|_{\mathcal{N}}$:
\begin{align*}
&(f,g)_{\mathcal{N}} = h^2\sum_{i,j=1}^{2N} f_{ij}g_{ij}, \qquad \|f\|_{\mathcal{N}}=\sqrt{(f,f)_{\mathcal{N}}},\\
&(\mathbf{f},\mathbf{g})_{\mathcal{N}} = h^2\sum_{i,j=1}^{2N} (f^1_{ij}g^1_{ij}+f^2_{ij}g^2_{ij}), \qquad \|\mathbf{g}\|_{\mathcal{N}}=\sqrt{(\mathbf{f},\mathbf{f})_{\mathcal{N}}}.
\end{align*}
Similarly, we can define the discrete Sobolev norm $\|\cdot\|_{H^2_h}$ and the discrete Sobolev semi-norm $|\cdot|_{H^2_h}$:
\begin{align*}
\|f\|_{H^2_h} = \Bigl( \sum_{|\alpha|\leq 2}\|D_{\alpha}f\|_{\mathcal{N}}^2 \Bigr)^{\frac{1}{2}}, \quad |f|_{H^2_h} = \Bigl( \sum_{|\alpha|= 2}\|D_{\alpha}f\|_{\mathcal{N}}^2 \Bigr)^{\frac{1}{2}},
\end{align*}
where as above $\alpha = (\alpha_1,\alpha_2)$ is a $2$-tuple of nonnegative integers with $|\alpha| = \alpha_1 + \alpha_2$, and $D_{\alpha} = D_x^{\alpha_1}D_y^{\alpha_2}.$ Furthermore, the following summation by parts formulas are available in \cite[proposition 2.2]{Ju17}:
\begin{lem}\label{IBP}
For any $f, ~g\in \mathcal{M}^\mathcal{N}$, and $\mathbf{f} = (f_1,f_2)^T, ~\mathbf{g} = (g_1,g_2)^T\in\mathcal{M}^\mathcal{N}\times\mathcal{M}^\mathcal{N}$, we have the following summation by parts formula:
\begin{equation*}
(f,\nabla_{\mathcal{N}}\cdot \mathbf{g})_{\mathcal{N}}=-(\nabla_{\mathcal{N}}f,\mathbf{g})_{\mathcal{N}}, \quad (f,\Delta_{\mathcal{N}}g)_{\mathcal{N}}=-(\nabla_{\mathcal{N}}f,\nabla_{\mathcal{N}}g)_{\mathcal{N}} = (\Delta_{\mathcal{N}}f, g)_{\mathcal{N}}.
\end{equation*}
\end{lem}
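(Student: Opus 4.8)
The plan is to move the whole identity to the frequency side, where the difference operators $D_x, D_y, D_x^2, D_y^2$ become diagonal Fourier multipliers, and then simply shift these multipliers between the two arguments of the $\ell^2$ inner product. First I would record the discrete orthogonality of the collocation exponentials on $\Omega_{\mathcal{N}}$: for frequencies $k,k',l,l'\in\{-N+1,\dots,N\}$, the quantity $h^2\sum_{i,j=1}^{2N}\exp\!\big(\tfrac{2\pi\mathrm{i}}{L}((k-k')x_i+(l-l')y_j)\big)$ equals $L^2$ when $k\equiv k'$ and $l\equiv l'$ modulo $2N$, and vanishes otherwise. Combined with the definition \eqref{interpolation operator} of the discrete Fourier coefficients $(\hat f_c)_{k,l}$, this yields a discrete Parseval identity expressing $(f,g)_{\mathcal{N}}$ as $L^2$ times a sum over the frequency lattice of products of the Fourier coefficients of $f$ and $g$.

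Next I would read off, from the very definitions of the discrete derivatives, that the $(k,l)$-th Fourier coefficient of $D_x f$ (resp.\ $D_y f$) is $\tfrac{2k\pi\mathrm{i}}{L}(\hat f_c)_{k,l}$ (resp.\ $\tfrac{2l\pi\mathrm{i}}{L}(\hat f_c)_{k,l}$), while those of $D_x^2 f$ and $D_y^2 f$ are $-\tfrac{4k^2\pi^2}{L^2}(\hat f_c)_{k,l}$ and $-\tfrac{4l^2\pi^2}{L^2}(\hat f_c)_{k,l}$; in particular $\Delta_{\mathcal{N}}$ has the real Fourier symbol $-\tfrac{4\pi^2}{L^2}(k^2+l^2)$ and $\nabla_{\mathcal{N}}\cdot\nabla_{\mathcal{N}}=\Delta_{\mathcal{N}}$. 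For the first identity I would expand $(f,\nabla_{\mathcal{N}}\cdot\mathbf g)_{\mathcal{N}}=(f,D_x g^1)_{\mathcal{N}}+(f,D_y g^2)_{\mathcal{N}}$, apply Parseval, and transfer the multipliers $\tfrac{2k\pi\mathrm{i}}{L}$, $\tfrac{2l\pi\mathrm{i}}{L}$ from $\mathbf g$ onto $f$; since these symbols are odd in the frequency, the transfer produces exactly the factor $-1$, giving $-(D_x f,g^1)_{\mathcal{N}}-(D_y f,g^2)_{\mathcal{N}}=-(\nabla_{\mathcal{N}}f,\mathbf g)_{\mathcal{N}}$. For the second identity I would set $\mathbf g=\nabla_{\mathcal{N}}g$ in the first one and use $\nabla_{\mathcal{N}}\cdot\nabla_{\mathcal{N}}g=\Delta_{\mathcal{N}}g$ to obtain $(f,\Delta_{\mathcal{N}}g)_{\mathcal{N}}=-(\nabla_{\mathcal{N}}f,\nabla_{\mathcal{N}}g)_{\mathcal{N}}$, and the remaining equality $-(\nabla_{\mathcal{N}}f,\nabla_{\mathcal{N}}g)_{\mathcal{N}}=(\Delta_{\mathcal{N}}f,g)_{\mathcal{N}}$ follows because the symbol of $\Delta_{\mathcal{N}}$ is real and can be attached to either argument.

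The one point that needs care is the bookkeeping at the Nyquist frequency $k=N$ (and likewise $l=N$): since the index set $\{-N+1,\dots,N\}$ is not symmetric, the modes $N$ and $-N$ are indistinguishable on the grid, so the Parseval step picks up contributions both from $k+k'=0$ and from $k+k'=2N$, and the ``odd symbol'' sign flip used above fails for the pure Nyquist contribution. The clean way around this is to adopt the standard convention for an even number of collocation points, whereby the Nyquist-mode symbol of an odd-order difference operator is set to zero, so that $D_x,D_y$ are genuinely skew-symmetric (and $D_x^2, D_y^2, \Delta_{\mathcal{N}}$ symmetric) as real operators on grid values — equivalently, the circulant matrices representing $D_x,D_y$ are antisymmetric. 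This is exactly the setting of \cite{Ju17}, from which the statement is quoted, so I would simply cite that convention; everything else is routine manipulation of the Fourier sums.
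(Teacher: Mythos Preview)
The paper does not actually prove this lemma: it simply records the identities and attributes them to \cite[Proposition~2.2]{Ju17}. So there is no ``paper's own proof'' to compare against; your sketch is in fact supplying what the paper omits.

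Your approach---pass to discrete Fourier coefficients via a Parseval identity on the collocation grid, then move the multipliers $\tfrac{2k\pi\mathrm{i}}{L}$, $\tfrac{2l\pi\mathrm{i}}{L}$ (purely imaginary, hence sign-flipping) and $-\tfrac{4\pi^2}{L^2}(k^2+l^2)$ (real, hence symmetric) between the two arguments---is the standard and correct one. Deducing the Laplacian identity from the divergence identity by taking $\mathbf g=\nabla_{\mathcal N}g$ is also clean.

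Your caution about the Nyquist index $k=N$ is well placed and is really the only nontrivial point. Note, however, that the definitions displayed in the paper do \emph{not} set the $k=N$ symbol of $D_x$ to zero: as written, $(D_x f)$ carries the coefficient $\tfrac{2N\pi\mathrm{i}}{L}(\hat f_c)_{N,l}$. With that literal definition the odd-symbol sign flip fails at the boundary mode and $D_x$ is not exactly skew-adjoint in $(\cdot,\cdot)_{\mathcal N}$ unless the Nyquist coefficients vanish. You resolve this by invoking the usual even-$2N$ convention (zero odd-order symbol at Nyquist) and citing \cite{Ju17}; that is appropriate here, since the paper itself defers to \cite{Ju17} for the lemma, but it would be worth stating explicitly that you are adopting that convention rather than the formula literally displayed in the paper.
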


Recall that the exact solution $u$ in \eqref{MBE} is assumed to be mean value free, thus we consider the subset of zero-mean grid functions:
\begin{equation*}
{\mathcal{M}}^{\mathcal{N}}_0 = \{v\in {\mathcal{M}}^{\mathcal{N}}\mid (v,1)_{\mathcal{N}}=0\}=\{v\in {\mathcal{M}}^{\mathcal{N}}\mid \hat{v}_{00}=0\}.
\end{equation*}
Define $L_{\mathcal{N}} = \varepsilon^2\Delta_{\mathcal{N}}^2|_{{\mathcal{M}}^{\mathcal{N}}_0}$, which is symmetric positive definite on ${\mathcal{M}}_0^{\mathcal{N}}$.

In turn, the spatial discretization of \eqref{MBE} becomes: Given $u_0\in {\mathcal{M}}^{\mathcal{N}}_0$, find $\hat{u}: [0,T]\rightarrow {\mathcal{M}}^{\mathcal{N}}_0$ such that
\begin{equation}\label{MBE spatial discrete}
\begin{aligned}
&\frac{\mbox{d}\hat{u}}{\mbox{d}t} = - L_{\mathcal{N}}\hat{u}-f_{\mathcal{N}}(\hat{u}),
  \quad L_{\mathcal{N}} = \varepsilon^2 \Delta_{\mathcal{N}}^2 ,  \quad t\in(0,T],\\
&\hat{u}(0) = u(0),
\end{aligned}
\end{equation}
where $f_{\mathcal{N}}(\hat{u})=\nabla_{\mathcal{N}}\cdot\left(\frac{\nabla_{\mathcal{N}}\hat{u}}{1+|\nabla_{\mathcal{N}}\hat{u}|^2}\right)$. Multiplying both sides of \eqref{MBE spatial discrete} by $e^{L_{\mathcal{N}}t}$ yields
\begin{equation}\label{derive step: eLt}
\frac{\mbox{d}e^{L_{\mathcal{N}}t}\hat{u}}{\mbox{d}t}=-e^{L_{\mathcal{N}}t}f_{\mathcal{N}}(\hat{u}).
\end{equation}
Integrating \eqref{derive step: eLt} from $t_n$ to $t_{n+1}$ gives
\begin{equation}\label{MBE spatial discrete2}
\hat{u}(t_{n+1}) = e^{-L_{\mathcal{N}}\tau}\hat{u}(t_n) - \int^{t_{n+1}}_{t_n}e^{-L_{\mathcal{N}}(t_{n+1}-t)}f_{\mathcal{N}}(\hat{u}(t))\mbox{d}t.
\end{equation}
By \cite[Lemma 4.1]{Ju17}, the error between the exact solution $u(t)$ and the solution $\hat{u}(t)$ of \eqref{MBE spatial discrete2} is of order $O(N^{-m})$, given that $u_e(t)$ is smooth enough.

\subsection{The ETD1 and ETDMs2 schemes}
Two exponential time differencing schemes (ETD1 and ETDMs2) have been proposed in~\cite{Ju17}, using the energy convex splitting method. Ju et al used a similar spatial discretization as \eqref{MBE spatial discrete2} is derived, with modified $L_{\mathcal{N}}$ and $f_{\mathcal{N}}(\hat{u}(t))$:
\begin{align}
L_{c} &= L_{\mathcal{N}} - \kappa\Delta_{\mathcal{N}},  \quad \kappa > 0 , \label{Lc}\\
f_{e}(\hat{u}(t)) &= f_{\mathcal{N}}(\hat{u}(t)) + \kappa\Delta_{\mathcal{N}}\hat{u}(t) . \label{fe}
\end{align}
For the ETD1, the term $f_{e}(\hat{u}(t))$ in $[t_n,t_{n+1}]$ is simply approximated by $f_{e}(\hat{u}(t_n))$. For the ETDMs2, $f_{e}(\hat{u}(t))$ is approximated by the linear Lagrange interpolation:
\begin{align}
 f_{e}(\hat{u}(t_{n})) + \frac{t-t_n}{\tau}[f_{e}(\hat{u}(t_{n})) - f_{e}(\hat{u}(t_{n-1}))], ~t\in [t_n, t_{n+1}]. \notag
\end{align}
Let $u_h(t)$ be the numerical solution of ETD1 and ETDMs2. Denote $u_h(t_n)$ as $u_h^{n}$ for $n\geq 0$. Integrating from $t_n$ to $t_{n+1}$, the following expressions are obtained in~\cite{Ju17}:
\begin{itemize}
\item \bf{ETD1:} 
$$u_h^{n+1} = e^{-\tau L_{c}}u_h^n - \phi_0(L_{c})f_{e}(u_h^n).$$
\item \bf{ETDMs2:} $$u_h^{n+1} = e^{-\tau L_{c}}u_h^n - \phi_0(L_{c})f_{e}(u_h^n) - \phi_1(L_{c})[f_{e}(u_h^n) - f_{e}(u_h^{n-1})],
$$
\end{itemize}
in which
\begin{align*}
&\phi_0(x) = x^{-1}(I - e^{-x\tau}),
&\phi_1(x) = x^{-1}[1 - (x\tau)^{-1}(I - e^{-x\tau})].
\end{align*}
The convergence analysis for both ETD1 and ETDMs2 was given by~\cite{Ju17}. However, a theoretical proof of the long time energy stability of ETDMs2 was not provided.

\subsection{The sETDMs2 scheme}
In this section we propose a second order in time stabilized exponential time differencing multistep scheme (sETDMs2). In order to guarantee the long time energy stability, a stabilizing term $A\tau^2\frac{\mbox{d} \Delta^2_{\mathcal{N}} \hat{u}(t)}{\mbox{d} t}$ is introduced, in which $A$ is a positive constant.  Also, we apply the same Lagrange approximation for $f_{\mathcal{N}}$ as in~\cite{Ju17}. Define a continuous in time function for $s\in [0, \tau]$:
\begin{align*}
f_{\mathcal{N},L}(s, \hat{u}(t_{n}),\hat{u}(t_{n-1})):= f_{\mathcal{N}}(\hat{u}(t_{n})) + \frac{s}{\tau}[f_{\mathcal{N}}(\hat{u}(t_{n})) - f_{\mathcal{N}}(\hat{u}(t_{n-1}))].
\end{align*}
We present the following sETDMs2 scheme:
\begin{itemize}
\item {\bfseries sETDMs2: } For $n\geq 1$, find $u_s^{n+1}: [0, t_{n+1}]\rightarrow {\mathcal{M}}^{\mathcal{N}}_0$ such that for any $t\in (t_n, t_{n+1}]$,
\begin{align}
&~\frac{\mbox{d} u_s^{n+1}(t)}{\mbox{d} t} + A\tau^2\frac{\mbox{d}\Delta^2_{\mathcal{N}} u_s^{n+1}(t)}{\mbox{d} t}= -L_{\mathcal{N}} u_s^{n+1}(t) - f_{\mathcal{N},L}(t - t_n, u_s^n,u_s^{n-1}),\label{sETDMs2: discrete}
\end{align}
with $u_s^{n+1}(t) := u_s^{n}(t)$ for $t\in [0, t_{n}]$.
\end{itemize}
The initial step is obtained as follow: Find $u_s^{1}: ~[0,t_{1}]\rightarrow {\mathcal{M}}^{\mathcal{N}}_0$ such that
\begin{align}
\frac{\mbox{d} u_s^{1}(t)}{\mbox{d} t} + A\tau^2\frac{\mbox{d} \Delta^2_{\mathcal{N}} u_s^{1}(t)}{\mbox{d} t}
= -L_{\mathcal{N}} u_s^{1}(t) - f_{\mathcal{N}}(u_s^{0}), ~t\in (0,t_1],\label{sETDMs2: discrete-init}
\end{align}
where $u_s^{0}= u(0)$.
In fact, as for the initial step, the stabilizing term can also be replaced by $-A\tau\frac{\mbox{d} \Delta_{\mathcal{N}} u_s^{1}(t)}{\mbox{d} t}$. Integrating \eqref{sETDMs2: discrete} and \eqref{sETDMs2: discrete-init} from $t_n$ to $t_{n+1}$, one obtains explicit expressions of $u_s^1$ and $u_s^{n+1}$ for sETDMs2:
\begin{itemize}
\item {\bfseries sETDMs2 (matrix form): }  For $n\ge 1$,
\end{itemize}
\begin{equation}
u_s^{n+1} = e^{-\mathcal{K}_{\mathcal{N}}\tau}u_s^n - \phi_0(\mathcal{K}_{\mathcal{N}})\mathcal{G}_{\mathcal{N}}(u_s^n) - \phi_1(\mathcal{K}_{\mathcal{N}})[\mathcal{G}_{\mathcal{N}}(u_s^n) - \mathcal{G}_{\mathcal{N}}(u_s^{n-1})],\label{sETDMs2}
\end{equation}
and the intial step is computed by
\begin{equation}
u_s^1 = e^{-\mathcal{K}_{\mathcal{N}}\tau}u_s^0 - \phi_0(\mathcal{K}_{\mathcal{N}})\mathcal{G}_{\mathcal{N}}(u_s^0),\label{sETDMs2-init}
\end{equation}
in which
\begin{align*}
&\mathcal{K}_{\mathcal{N}} = \varepsilon^2 (I+A\tau^2\Delta_{\mathcal{N}}^2)^{-1}\Delta_{\mathcal{N}}^2,
&\mathcal{G}_{\mathcal{N}}(v) = (I+A\tau^2\Delta_{\mathcal{N}}^2)^{-1}f_{\mathcal{N}}(v),\\
&\phi_0(\mathcal{K}_{\mathcal{N}}) = \mathcal{K}_{\mathcal{N}}^{-1}(I - e^{-\mathcal{K}_{\mathcal{N}}\tau}),
&\phi_1(\mathcal{K}_{\mathcal{N}}) = \mathcal{K}_{\mathcal{N}}^{-1}[I - (\mathcal{K}_{\mathcal{N}}\tau)^{-1}(I - e^{-\mathcal{K}_{\mathcal{N}}\tau})].
\end{align*}
Note that sETDMs2 and ETDMs2 have similar matrix forms for solutions at $\{t_i\}_{i=1}^{N_t}$, and they share similar computational complexity.

\section{Long time energy stability of the sETDMs2 scheme}\label{sec: sETDMs2 stab}
In later analysis we shall make frequent use of the Gagliardo-Nirenberg inequality and Agmon's inequality (see~\cite{adams2003, agmon2010lectures, Nirenberg1959on}): for function $v$ in $\mathbb{R}^d$,
\begin{align}
|v|_{W^{j,p}(\Omega)}&\leq C\|v\|_{L^q(\Omega)}^{1-\theta}\|v\|_{W^{m,r}(\Omega)}^{\theta}, \quad \frac{1}{p} = \frac{j}{d}+\theta \left(\frac{1}{r}-\frac{m}{d}\right)+(1-\theta)\frac{1}{q},\label{Gagliardo-Nirenberg}\\
\|v\|_{L^{\infty}(\Omega)}&\leq C\|v\|_{L^2(\Omega)}^{1-\frac{n}{2m}}\|v\|_{H^m(\Omega)}^{\frac{n}{2m}}, \quad m>\frac{d}{2},~ v\in H^m(\Omega) , \label{Agmon}
\end{align}
with $1\leq q,r\leq \infty, ~0\leq j<m, ~\frac{j}{m}\leq \theta\leq 1$.

Consider the following numerical energy functional:
\begin{align}
\tilde{E}(u_s^{n+1}(t))&= E_{\mathcal{N}}(u_s^{n+1}(t)) +\frac{\sqrt{3}-1}{4} \left\|\frac{\mbox{d} u_s^{n+1}(t)}{\mbox{d} t}\right\|_{L^2(t_n,t_{n+1}; \ell^2)}^2 \nonumber\\
&\quad + \frac{(\sqrt{3}+1)\tau^2}{24}\left\|\frac{\mbox{d} \Delta_{\mathcal{N}} u_s^{n+1}(t)}{\mbox{d} t}\right\|_{L^2(t_n,t_{n+1}; \ell^2)}^2,
\end{align}
where $E_{\mathcal{N}}(v)$ is the discrete version of the continuous energy functional $E(v)$ in \eqref{MBE energy}:
\begin{equation}\label{MBE energy: discrete}
E_{\mathcal{N}}(v)=\left(-\frac{1}{2}\ln(1+|\nabla_{\mathcal{N}}v|^2),1\right)_{\mathcal{N}} + \frac{\varepsilon^2}{2}\|\Delta_{\mathcal{N}} v\|_{\mathcal{N}}^2,~~~\forall v \in {\mathcal{M}^{\mathcal{N}}}.
\end{equation}

The main theoretical result of this section is the following theorem.
\begin{thm}\label{thm: energy stab}
Assume that $A\geq \frac{2+\sqrt{3}}{6}$. The numerical system \eqref{sETDMs2: discrete} is energy stable in the sense that for any $1 \leq n\leq N_t-1$,
\begin{align*}
\tilde{E}(u_s^{n+1}) + \left(A-\frac{2+\sqrt{3}}{6}\right)\tau^2\left\|\frac{{\rm d} \Delta_{\mathcal{N}} u_s^{n+1}(t)}{{\rm d} t}\right\|_{L^2(t_n,t_{n+1}; \ell^2)}^2 \leq \tilde{E}(u_s^{n}) .
\end{align*}
\end{thm}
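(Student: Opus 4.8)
The plan is to derive the energy inequality by testing the differential equation \eqref{sETDMs2: discrete} against a suitable combination of $\frac{\mathrm{d} u_s^{n+1}}{\mathrm{d}t}$ and then integrating over the subinterval $[t_n,t_{n+1}]$, in the spirit of the continuous energy-law computation $\frac{\mathrm dE}{\mathrm dt} = -\|u_t\|_{L^2}^2$ already recorded in the introduction. Concretely, take the $\ell^2$ inner product of \eqref{sETDMs2: discrete} with $\frac{\mathrm{d} u_s^{n+1}(t)}{\mathrm{d}t}$. On the left this produces $\left\|\frac{\mathrm{d} u_s^{n+1}}{\mathrm{d}t}\right\|_{\mathcal N}^2 + A\tau^2\left\|\frac{\mathrm{d}\Delta_{\mathcal N} u_s^{n+1}}{\mathrm{d}t}\right\|_{\mathcal N}^2$ after using the summation-by-parts identity from Lemma~\ref{IBP} on the stabilizing term; the $L_{\mathcal N}$ term becomes $\frac{\varepsilon^2}{2}\frac{\mathrm d}{\mathrm dt}\|\Delta_{\mathcal N}u_s^{n+1}\|_{\mathcal N}^2$, an exact time derivative. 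On the right, the awkward term is $-\left(f_{\mathcal N,L}(t-t_n,u_s^n,u_s^{n-1}),\frac{\mathrm{d}u_s^{n+1}}{\mathrm{d}t}\right)_{\mathcal N}$: since $f_{\mathcal N,L}$ is a linear-in-$s$ extrapolation built from the \emph{previous} two time levels rather than the current unknown, it is not itself a time derivative of the logarithmic energy contribution, and this mismatch is the crux of the argument.

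The key device for handling that term is to write $f_{\mathcal N}(\hat u) = \nabla_{\mathcal N}\cdot\!\big(\tfrac{\nabla_{\mathcal N}\hat u}{1+|\nabla_{\mathcal N}\hat u|^2}\big)$ as the variational derivative of the concave part of $E_{\mathcal N}$, so that $\big(f_{\mathcal N}(v),w\big)_{\mathcal N}$ relates to the directional derivative of $-\big(\tfrac12\ln(1+|\nabla_{\mathcal N}v|^2),1\big)_{\mathcal N}$; then I would split $f_{\mathcal N,L}(t-t_n,u_s^n,u_s^{n-1}) = f_{\mathcal N}(u_s^{n+1}(t)) + \big[f_{\mathcal N,L}(\cdot) - f_{\mathcal N}(u_s^{n+1}(t))\big]$. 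The first piece assembles with the $\varepsilon^2$ term into $\frac{\mathrm d}{\mathrm dt}E_{\mathcal N}(u_s^{n+1}(t))$. The second, error, piece must be controlled by a Taylor-type / Lipschitz estimate: $f_{\mathcal N}$ is globally Lipschitz (its symbol $\tfrac{p}{1+|p|^2}$ has bounded derivative) with respect to $\nabla_{\mathcal N}$, so $f_{\mathcal N,L}(t-t_n,u_s^n,u_s^{n-1}) - f_{\mathcal N}(u_s^{n+1}(t))$ is bounded in $\ell^2$ by a constant times the sum of differences $\|u_s^{n+1}(t)-u_s^n\|_{H^1_h}$ and $\|u_s^n - u_s^{n-1}\|_{H^1_h}$, each of which is $\int$ of $\frac{\mathrm{d}}{\mathrm{d}t}$ over a subinterval. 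This is where the specific numerical constants $\tfrac{\sqrt3-1}{4}$, $\tfrac{(\sqrt3+1)}{24}$ and the threshold $A\ge\tfrac{2+\sqrt3}{6}$ enter: one applies Cauchy–Schwarz and Young's inequality to absorb the cross terms, using an inverse inequality (Lemma~\ref{inverse estimate}, or rather the fact that on the zero-mean space $\Delta_{\mathcal N}$ controls lower-order norms) to dominate $H^1_h$-type differences by $\tau\|\frac{\mathrm d\Delta_{\mathcal N}u_s}{\mathrm dt}\|$ contributions; the quadratic form that results in the coefficients must be sign-definite, and optimizing it pins down exactly those constants and the bound on $A$.

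After this estimate the inequality reads, schematically,
\begin{align*}
\frac{\mathrm d}{\mathrm dt}E_{\mathcal N}(u_s^{n+1}(t)) + \Big\|\tfrac{\mathrm{d}u_s^{n+1}}{\mathrm{d}t}\Big\|_{\mathcal N}^2 + A\tau^2\Big\|\tfrac{\mathrm{d}\Delta_{\mathcal N}u_s^{n+1}}{\mathrm{d}t}\Big\|_{\mathcal N}^2 \le \text{(cross terms in derivatives at }t,t_n,t_{n-1}).
\end{align*}
Integrating over $t\in[t_n,t_{n+1}]$ converts the cross terms into the $L^2(t_n,t_{n+1};\ell^2)$ and $L^2(t_{n-1},t_n;\ell^2)$ norms of the time derivatives, which are precisely the quantities appearing in $\tilde E(u_s^{n+1})$ and $\tilde E(u_s^n)$; collecting everything and using the definition of $\tilde E$ yields the telescoping bound with the nonnegative remainder $\big(A-\tfrac{2+\sqrt3}{6}\big)\tau^2\|\tfrac{\mathrm d\Delta_{\mathcal N}u_s^{n+1}}{\mathrm dt}\|_{L^2(t_n,t_{n+1};\ell^2)}^2$ on the left. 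I expect the main obstacle to be the bookkeeping of the quadratic form in the three derivative quantities — choosing the weights so that the $t_{n-1}$-level term is exactly reabsorbed into $\tilde E(u_s^n)$ while leaving a nonnegative leftover — rather than any single estimate; a secondary technical point is justifying that $u_s^{n+1}(t)$ is smooth enough in $t$ on $(t_n,t_{n+1}]$ for all these manipulations, which follows from the explicit exponential representation \eqref{sETDMs2}.
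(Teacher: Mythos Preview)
Your overall strategy matches the paper's proof closely: test \eqref{sETDMs2: discrete} against $\tfrac{\mathrm d u_s^{n+1}}{\mathrm dt}$, split off $\beta(\nabla_{\mathcal N}u_s^{n+1}(t))$ from the extrapolated nonlinearity to reconstruct $\tfrac{\mathrm d}{\mathrm dt}E_{\mathcal N}$, bound the two remainder pieces via the Lipschitz property $|\beta(\mathbf v)-\beta(\mathbf w)|\le|\mathbf v-\mathbf w|$, integrate over $[t_n,t_{n+1}]$, and optimize the resulting quadratic form to produce the constants and the threshold on $A$.

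The one step that needs correcting is how you pass from the $H^1_h$-type quantity $\bigl\|\tfrac{\mathrm d\nabla_{\mathcal N}u_s}{\mathrm dt}\bigr\|_{\mathcal N}$ to the terms appearing in $\tilde E$. Neither the Bernstein inverse inequality (Lemma~\ref{inverse estimate}) nor a Poincar\'e-type bound ``$\Delta_{\mathcal N}$ controls lower-order norms'' will do: the former introduces an $N$-dependent constant, and the latter would leave you needing $A\tau^2\gtrsim C\tau$, i.e.\ a $\tau$-dependent lower bound on $A$, destroying unconditional stability. What the paper uses is the \emph{interpolation} inequality
\[
\Bigl\|\tfrac{\mathrm d\nabla_{\mathcal N}u_s}{\mathrm dt}\Bigr\|_{\mathcal N}^2
\;\le\; \frac{1}{2\tau\lambda}\Bigl\|\tfrac{\mathrm d u_s}{\mathrm dt}\Bigr\|_{\mathcal N}^2
+ \frac{\tau\lambda}{2}\Bigl\|\tfrac{\mathrm d\Delta_{\mathcal N}u_s}{\mathrm dt}\Bigr\|_{\mathcal N}^2,
\]
which follows from $\|\nabla_{\mathcal N}v\|_{\mathcal N}^2=-(v,\Delta_{\mathcal N}v)_{\mathcal N}\le\|v\|_{\mathcal N}\|\Delta_{\mathcal N}v\|_{\mathcal N}$ and Young with a free parameter $\lambda>0$. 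This split is exactly what allows the $H^1$ contribution to be shared between the $O(1)$ term $\|\tfrac{\mathrm du}{\mathrm dt}\|^2$ on the left and the $O(\tau^2)$ stabilizer $A\tau^2\|\tfrac{\mathrm d\Delta u}{\mathrm dt}\|^2$; optimizing over $\lambda$ (the paper takes $\lambda=\tfrac{3+\sqrt3}{6}$) then yields the coefficients $\tfrac{\sqrt3-1}{4}$, $\tfrac{\sqrt3+1}{24}$ in $\tilde E$ and the threshold $A\ge\tfrac{2+\sqrt3}{6}$. With this correction your outline goes through essentially as written.
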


\begin{proof}
We define $\beta(\mathbf{v}) := \frac{\mathbf{v}}{1+|\mathbf{v}|^2}$ and denote
$$
\mathbf{\hat{f}}_{\mathcal{N},L}(t - t_n, u_s^{n},u_s^{n-1}) := \beta(\nabla_{\mathcal{N}} u_s^{n}) + \frac{t - t_n}{\tau}[\beta(\nabla_{\mathcal{N}} u_s^{n}) - \beta(\nabla_{\mathcal{N}} u_s^{n-1})].
$$
By definition we have: For any $v, ~w\in\mathcal{M}^{\mathcal{N}}$ and $\zeta\geq 0$,
\begin{align*}
&f_{\mathcal{N}}(v) = \nabla_{\mathcal{N}}\cdot\beta(\nabla_{\mathcal{N}}v),~f_{\mathcal{N},L}(\zeta, v, w) =  \nabla_{\mathcal{N}}\cdot\mathbf{\hat{f}}_{\mathcal{N},L}(\zeta, v, w).
\end{align*}
Taking an inner product with \eqref{sETDMs2: discrete} by $\frac{\mbox{d} u_s^{n+1}(t)}{\mbox{d} t}$ gives that: For $n\geq 1$,
\begin{align}
&\left\|\frac{\mbox{d} u_s^{n+1}(t)}{\mbox{d} t}\right\|_{\mathcal{N}}^2 + A\tau^2 \left\|\frac{\mbox{d} \Delta_{\mathcal{N}} u_s^{n+1}(t)}{\mbox{d} t}\right\|_{\mathcal{N}}^2 + \frac{\varepsilon^2}{2}\frac{{\rm d} \|\Delta_{\mathcal{N}} u_s^{n+1}(t)\|_{\mathcal{N}}^2}{{\rm d} t} \nonumber \\
= ~& \left(\mathbf{\hat{f}}_{\mathcal{N},L}(t - t_n, u_s^{n},u_s^{n-1}), \frac{{\rm d} \nabla_{\mathcal{N}} u_s^{n+1}(t)}{{\rm d} t} \right)_{\mathcal{N}}.\label{sETDMs2: middle1}
\end{align}
Recall that $E_{\mathcal{N}}(u)=\left(-\frac{1}{2}\ln(1+|\nabla_{\mathcal{N}}u|^2),1\right)_{\mathcal{N}} + \frac{\varepsilon^2}{2}\|\Delta_{\mathcal{N}} u\|_{\mathcal{N}}^2$, thus
$$
\frac{{\rm d}}{{\rm d}t}E_{\mathcal{N}}(u_s^{n+1}(t)) = \left(-\beta(\nabla_{\mathcal{N}}u_s^{n+1}(t)), \frac{{\rm d} \nabla_{\mathcal{N}} u_s^{n+1}(t)}{{\rm d} t} \right)_{\mathcal{N}} + \frac{\varepsilon^2}{2}\frac{{\rm d} \|\Delta_{\mathcal{N}} u_s^{n+1}(t)\|_{\mathcal{N}}^2}{{\rm d} t}.
$$
Therefore, we can rewrite \eqref{sETDMs2: middle1} as below:
\begin{align}
&\left\|\frac{\mbox{d} u_s^{n+1}(t)}{\mbox{d} t}\right\|_{\mathcal{N}}^2 + A\tau^2 \left\|\frac{\mbox{d} \Delta_{\mathcal{N}} u_s^{n+1}(t)}{\mbox{d} t}\right\|_{\mathcal{N}}^2 + \frac{{\rm d}}{{\rm d}t}E_{\mathcal{N}}(u_s^{n+1}(t)) \nonumber \\
= ~& \left(\mathbf{\hat{f}}_{\mathcal{N},L}(t - t_n, u_s^{n},u_s^{n-1}) - \beta(\nabla_{\mathcal{N}}u_s^{n+1}(t)), \frac{{\rm d} \nabla_{\mathcal{N}} u_s^{n+1}(t)}{{\rm d} t} \right)_{\mathcal{N}}.\label{sETDMs2: middle2}
\end{align}
Integrating \eqref{sETDMs2: middle2} from $t_n$ to $t_{n+1}$, we have
\begin{align}
&\left\|\frac{\mbox{d} u_s^{n+1}(t)}{\mbox{d} t}\right\|_{L^2(t_n,t_{n+1}; \ell^2)}^2 + A\tau^2 \left\|\frac{\mbox{d} \Delta_{\mathcal{N}} u_s^{n+1}(t)}{\mbox{d} t}\right\|_{L^2(t_n,t_{n+1}; \ell^2)}^2 \nonumber \\
& + E_{\mathcal{N}}(u_s^{n+1}) - E_{\mathcal{N}}(u_s^{n}) \nonumber \\
= ~&\int_{t_n}^{t_{n+1}}\left(\beta(\nabla_{\mathcal{N}} u_s^n) - \beta(\nabla_{\mathcal{N}} u_s^{n+1}(t)), \frac{{\rm d} \nabla_{\mathcal{N}} u_s^{n+1}(t)}{{\rm d} t} \right)_{\mathcal{N}} ~dt \nonumber \\
& + \int_{t_n}^{t_{n+1}} \frac{t-t_n}{\tau}\left(\beta(\nabla_{\mathcal{N}} u_s^n) - \beta(\nabla_{\mathcal{N}} u_s^{n-1}), \frac{{\rm d} \nabla_{\mathcal{N}} u_s^{n+1}(t)}{{\rm d} t} \right)_{\mathcal{N}} ~dt \nonumber \\
:= ~& (\uppercase\expandafter{\romannumeral1}) + (\uppercase\expandafter{\romannumeral2}).\label{sETDMs2: middle3}
\end{align}
As for $(\uppercase\expandafter{\romannumeral1})$, an application of H$\rm\ddot{o}$lder's inequality gives
\begin{align}
(\uppercase\expandafter{\romannumeral1}) &\leq \int_{t_n}^{t_{n+1}}\|\beta(\nabla_{\mathcal{N}} u_s^{n+1}(t)) - \beta(\nabla_{\mathcal{N}} u_s^n)\|_{\mathcal{N}} \left\|\frac{{\rm d} \nabla_{\mathcal{N}} u_s^{n+1}(t)}{{\rm d} t}\right\|_{\mathcal{N}}~dt.\label{(I) est1}
\end{align}
Using the inequality $|\beta(\mathbf{v}) - \beta(\mathbf{w})| \leq |\mathbf{v} - \mathbf{w}|$ in \cite[Lemma 3.5]{Ju17} and reversing the order of integration and summation, we obtain
\begin{align}
\|\beta(\nabla_{\mathcal{N}} u_s^{n+1}(t)) - \beta(\nabla_{\mathcal{N}} u_s^n)\|_{\mathcal{N}}
&\leq \| \nabla_{\mathcal{N}} u_s^{n+1}(t) - \nabla_{\mathcal{N}} u_s^{n} \|_{\mathcal{N}} \nonumber\\
&= \left\|\int_{t_n}^{t} \frac{{\rm d}\nabla_{\mathcal{N}} u_s^{n+1}(l)}{{\rm d}t} ~dl\right\|_{\mathcal{N}} \nonumber \\
&\leq \tau^{\frac{1}{2}}\left\|\frac{{\rm d}\nabla_{\mathcal{N}} u_s^{n+1}(t)}{{\rm d}t}\right\|_{L^2(t_n,t_{n+1}; \ell^2)}.\label{(I) est2}
\end{align}
Substituting \eqref{(I) est2} into \eqref{(I) est1} and applying H$\rm\ddot{o}$lder's inequality again, we get
\begin{align}
(\uppercase\expandafter{\romannumeral1}) &\leq \tau\left\|\frac{{\rm d}\nabla_{\mathcal{N}} u_s^{n+1}(t)}{{\rm d}t}\right\|_{L^2(t_n,t_{n+1}; \ell^2)}^2.\label{I nabla}
\end{align}
Also notice that by the interpolation inequality, we have
$$
\left\|\frac{{\rm d} \nabla_{\mathcal{N}} u_s^{n+1}(t)}{{\rm d} t}\right\|_{\mathcal{N}}^2 \leq \frac{1}{2\tau \lambda}\left\|\frac{\mbox{d} u_s^{n+1}(t)}{\mbox{d} t}\right\|_{\mathcal{N}}^2 + \frac{\tau \lambda}{2}\left\|\frac{\mbox{d} \Delta_{\mathcal{N}} u_s^{n+1}(t)}{\mbox{d} t}\right\|_{\mathcal{N}}^2,
$$
where $\lambda$ is a positive constant to be decided later. Thus we arrive:
\begin{align}
(\uppercase\expandafter{\romannumeral1}) &\leq \frac{1}{2\lambda}\left\|\frac{\mbox{d} u_s^{n+1}(t)}{\mbox{d} t}\right\|_{L^2(t_n,t_{n+1}; \ell^2)}^2 + \frac{\tau^2 \lambda}{2}\left\|\frac{\mbox{d} \Delta_{\mathcal{N}} u_s^{n+1}(t)}{\mbox{d} t}\right\|_{L^2(t_n,t_{n+1}; \ell^2)}^2.\label{sETDMs2: est1}
\end{align}

The term $(\uppercase\expandafter{\romannumeral2})$ could be estimated in a similar way:
\begin{align}
(\uppercase\expandafter{\romannumeral2})
&\leq \int_{t_n}^{t_{n+1}}\frac{t-t_n}{\tau}\|\nabla_{\mathcal{N}} u_s^n - \nabla_{\mathcal{N}} u_s^{n-1}\|_{\mathcal{N}} \left\|\frac{{\rm d} \nabla_{\mathcal{N}} u_s^{n+1}(t)}{{\rm d} t}\right\|_{\mathcal{N}}~dt \nonumber \\
&\leq \tau^{\frac{1}{2}} \left\|\frac{{\rm d} \nabla_{\mathcal{N}} u_s^n(t)}{{\rm d} t}\right\|_{L^2(t_{n-1},t_{n}; \ell^2)}\int_{t_n}^{t_{n+1}} \frac{t-t_n}{\tau} \left\|\frac{{\rm d} \nabla_{\mathcal{N}} u_s^{n+1}(t)}{{\rm d} t}\right\|_{\mathcal{N}}~dt \nonumber \\
&\leq \frac{\tau}{\sqrt{3}} \left\|\frac{{\rm d} \nabla_{\mathcal{N}} u_s^{n}(t)}{{\rm d} t}\right\|_{L^2(t_{n-1},t_{n}; \ell^2)} \left\|\frac{{\rm d} \nabla_{\mathcal{N}} u_s^{n+1}(t)}{{\rm d} t}\right\|_{L^2(t_n,t_{n+1}; \ell^2)} \nonumber \\
&\leq \frac{\tau}{2\sqrt{3}} \left\|\frac{{\rm d} \nabla_{\mathcal{N}} u_s^{n}(t)}{{\rm d} t}\right\|_{L^2(t_{n-1},t_{n}; \ell^2)}^2 + \frac{\tau}{2\sqrt{3}}\left\|\frac{{\rm d} \nabla_{\mathcal{N}} u_s^{n+1}(t)}{{\rm d} t}\right\|_{L^2(t_n,t_{n+1}; \ell^2)}^2,\label{II nabla}
\end{align}
where we have applied the Cauchy-Schwarz inequality to obtain the last inequality. Again, we make use of the interpolation inequality and get
\begin{align}
(\uppercase\expandafter{\romannumeral2}) &\leq \frac{1}{4\sqrt{3}\lambda} \left\|\frac{{\rm d} u_s^{n}(t)}{{\rm d} t}\right\|_{L^2(t_{n-1},t_{n}; \ell^2)}^2 +\frac{\tau^2 \lambda}{4\sqrt{3}} \left\|\frac{{\rm d} \Delta_{\mathcal{N}} u_s^{n}(t)}{{\rm d} t}\right\|_{L^2(t_{n-1},t_{n}; \ell^2)}^2  \nonumber\\
&+ \frac{1}{4\sqrt{3}\lambda}\left\|\frac{\mbox{d} u_s^{n+1}(t)}{\mbox{d} t}\right\|_{L^2(t_n,t_{n+1}; \ell^2)}^2+ \frac{\tau^2 \lambda}{4\sqrt{3}}\left\|\frac{\mbox{d} \Delta_{\mathcal{N}} u_s^{n+1}(t)}{\mbox{d} t}\right\|_{L^2(t_n,t_{n+1}; \ell^2)}^2.\label{sETDMs2: est2}
\end{align}

Substituting \eqref{sETDMs2: est1} and \eqref{sETDMs2: est2} into \eqref{sETDMs2: middle3}, we arrive at
\begin{align}
0\geq &\left(1-\frac{2\sqrt{3}+1}{4\sqrt{3}\lambda}\right)\left\|\frac{\mbox{d} u_s^{n+1}(t)}{\mbox{d} t}\right\|_{L^2(t_n,t_{n+1}; \ell^2)}^2 - \frac{1}{4\sqrt{3}\lambda} \left\|\frac{{\rm d} u_s^{n}(t)}{{\rm d} t}\right\|_{L^2(t_{n-1},t_{n}; \ell^2)}^2 \nonumber \\
&+\left(A-\frac{(2\sqrt{3}+1)\lambda}{4\sqrt{3}} \right)\tau^2\left\|\frac{\mbox{d} \Delta_{\mathcal{N}} u_s^{n+1}(t)}{\mbox{d} t}\right\|_{L^2(t_n,t_{n+1}; \ell^2)}^2  \nonumber \\
& -\frac{\lambda\tau^2}{4\sqrt{3}} \left\|\frac{{\rm d} \Delta_{\mathcal{N}} u_s^{n}(t)}{{\rm d} t}\right\|_{L^2(t_{n-1},t_{n}; \ell^2)}^2 + E_{\mathcal{N}}(u_s^{n+1}) - E_{\mathcal{N}}(u_s^{n}), \label{sETDMs2: est total}
\end{align}
from which we can obtain the following restraints on $\lambda$ and $A$:
\begin{align}
1-\frac{2\sqrt{3}+1}{4\sqrt{3}\lambda} \geq \frac{1}{4\sqrt{3}\lambda} &\Rightarrow \lambda \geq \frac{3+\sqrt{3}}{6},\label{lambda bd}\\
A-\frac{(2\sqrt{3}+1)\lambda}{4\sqrt{3}} \geq \frac{\lambda}{4\sqrt{3}} &\Rightarrow A \geq \frac{3+\sqrt{3}}{6}\lambda\geq \left(\frac{3+\sqrt{3}}{6}\right)^2 = \frac{2+\sqrt{3}}{6}.\label{A bd}
\end{align}
Setting $\lambda = \frac{3+\sqrt{3}}{6}$, \eqref{sETDMs2: est total} is equal to
\begin{align}
& -\left(A-\frac{2+\sqrt{3}}{6} \right)\tau^2\left\|\frac{\mbox{d} \Delta_{\mathcal{N}} u_s^{n+1}(t)}{\mbox{d} t}\right\|_{L^2(t_n,t_{n+1}; \ell^2)}^2\notag\\
\geq ~&\frac{\sqrt{3}-1}{4}\left\|\frac{\mbox{d} u_s^{n+1}(t)}{\mbox{d} t}\right\|_{L^2(t_n,t_{n+1}; \ell^2)}^2 - \frac{\sqrt{3}-1}{4} \left\|\frac{{\rm d} u_s^{n}(t)}{{\rm d} t}\right\|_{L^2(t_{n-1},t_{n}; \ell^2)}^2 \nonumber \\
&+\frac{(\sqrt{3}+1)\tau^2}{24}\left\|\frac{\mbox{d} \Delta_{\mathcal{N}} u_s^{n+1}(t)}{\mbox{d} t}\right\|_{L^2(t_n,t_{n+1}; \ell^2)}^2 \nonumber \\
& -\frac{(\sqrt{3}+1)\tau^2}{24} \left\|\frac{{\rm d} \Delta_{\mathcal{N}} u_s^{n}(t)}{{\rm d} t}\right\|_{L^2(t_{n-1},t_{n}; \ell^2)}^2  + E_{\mathcal{N}}(u_s^{n+1}) - E_{\mathcal{N}}(u_s^{n})  \nonumber \\
\geq ~&\tilde{E}(u_s^{n+1}) - \tilde{E}(u_s^{n}).\label{sETDMs2: est final}
\end{align}
Thus we have proved the energy stability of \eqref{sETDMs2: discrete}.
\end{proof}

\begin{rem}
With a slight modification of the proof, we can obtain the energy decay property for the following numerical energy functional:
\begin{align}
\hat{E}(u_s^{n+1}) := E_{\mathcal{N}}(u_s^{n+1}) + \frac{\tau}{2\sqrt{3}}\left\|\frac{{\rm d}\nabla_{\mathcal{N}} u_s^{n+1}(t)}{{\rm d} t}\right\|_{L^2(t_n,t_{n+1}; \ell^2)}^2.
\end{align}
Applying the interpolation inequality to the LHS of \eqref{sETDMs2: middle3},  substituting \eqref{I nabla} and \eqref{II nabla} into the RHS of \eqref{sETDMs2: middle3}, we see that
\begin{align*}
&2\sqrt{A}\tau\left\|\frac{{\rm d}\nabla_{\mathcal{N}} u_s^{n+1}(t)}{{\rm d} t}\right\|_{L^2(t_n,t_{n+1}; \ell^2)}^2 + E_{\mathcal{N}}(u_s^{n+1}) - E_{\mathcal{N}}(u_s^{n}) \notag \\
\leq ~& \frac{(6+\sqrt{3})\tau}{6}\left\|\frac{{\rm d}\nabla_{\mathcal{N}} u_s^{n+1}(t)}{{\rm d}t}\right\|_{L^2(t_n,t_{n+1}; \ell^2)}^2+\frac{\tau}{2\sqrt{3}} \left\|\frac{{\rm d} \nabla_{\mathcal{N}} u_s^{n}(t)}{{\rm d} t}\right\|_{L^2(t_{n-1},t_{n}; \ell^2)}^2.
\end{align*}
Rearranging the terms, we also have
\begin{align*}
&\left(2\sqrt{A}-\frac{6+\sqrt{3}}{6}\right)\tau\left\|\frac{{\rm d}\nabla_{\mathcal{N}} u_s^{n+1}(t)}{{\rm d} t}\right\|_{L^2(t_n,t_{n+1};\ell^2)}^2 + E_{\mathcal{N}}(u_s^{n+1})  \notag \\
\leq ~& \frac{\tau}{2\sqrt{3}} \left\|\frac{{\rm d} \nabla_{\mathcal{N}} u_s^{n}(t)}{{\rm d} t}\right\|_{L^2(t_{n-1},t_{n};\ell^2)}^2+ E_{\mathcal{N}}(u_s^{n})=\hat{E}(u_s^{n}).
\end{align*}
Therefore,
\begin{align*}
& \hat{E}(u_s^{n}) + 2\left(\sqrt{A}-\frac{3+\sqrt{3}}{6}\right)\tau\left\|\frac{{\rm d}\nabla_{\mathcal{N}} u_s^{n+1}(t)}{{\rm d} t}\right\|_{L^2(t_n,t_{n+1}; \ell^2)}^2\leq \hat{E}(u_s^{n}) ,
\end{align*}
so that the energy estimate $\hat{E}(u_s^{n+1})\leq \hat{E}(u_s^{n})$ is valid under the same assumption of $A$ as in Theorem~\ref{thm: energy stab}, if we notice that $A\geq\left(\frac{3+\sqrt{3}}{6}\right)^2=\frac{2+\sqrt{3}}{6}$. 
\end{rem}

Given the above energy stability and the fact that $u_s^{n+1}(t)\in \mathcal{M}_0^{\mathcal{N}}$, now we are able to derive a uniform in time bound of $\|u_s^{n+1}(t)\|^2_{H^2_h}$.
\begin{prop}\label{lm: H2 stab}
Assume that the initial solution $u(0)$ has $H^2_{h}(\Omega)$-regularity and $A\geq \frac{2+\sqrt{3}}{6}$. 
Then we have global in time bounds for the numerical solution 
\begin{align}
E_{\mathcal{N}}(u_s^{n+1}) \leq E_{\mathcal{N}}(u_s^{0}), \quad \|u_s^{n+1}(t)\|^2_{H^2_h}\leq C_1, \quad\text{for } 0\leq n\leq N_t-1,
\end{align}
where $C_1$ only depends on $\varepsilon$, $\Omega$ and $\|u(0)\|_{H^2_h}$.
\end{prop}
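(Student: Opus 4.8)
The plan is to derive the two bounds in sequence, the first being essentially a consequence of Theorem~\ref{thm: energy stab}, and the second following from the first by an elliptic/interpolation argument together with the explicit structure of the discrete energy $E_{\mathcal{N}}$. First I would handle the initial step \eqref{sETDMs2: discrete-init} separately: repeating the inner-product argument from the proof of Theorem~\ref{thm: energy stab} with the simpler right-hand side $f_{\mathcal{N}}(u_s^0)$ (no Lagrange extrapolation term $(\uppercase\expandafter{\romannumeral2})$ is present) yields, after integrating from $0$ to $t_1$ and applying only the estimate \eqref{sETDMs2: est1}-type bound on $(\uppercase\expandafter{\romannumeral1})$, an inequality of the form $\tilde{E}(u_s^1) \le \tilde{E}(u_s^0) = E_{\mathcal{N}}(u_s^0)$ (the last equality because $u_s^0$ is a fixed grid function with zero time-derivative, so the two $L^2(t_n,t_{n+1};\ell^2)$ terms vanish at $n=0$). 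Chaining Theorem~\ref{thm: energy stab} for $n = 1,\dots,N_t-1$ then gives $\tilde{E}(u_s^{n+1}) \le \tilde{E}(u_s^1) \le E_{\mathcal{N}}(u_s^0)$ for all $n$. Since all three terms making up $\tilde{E}$ are nonnegative (the two added terms are squared norms, and $E_{\mathcal{N}}$ itself is a sum of $\frac{\varepsilon^2}{2}\|\Delta_{\mathcal{N}}v\|_{\mathcal{N}}^2$ plus the logarithm term), we immediately get $E_{\mathcal{N}}(u_s^{n+1}(t)) \le \tilde{E}(u_s^{n+1}(t)) \le E_{\mathcal{N}}(u_s^0)$ for every $t\in[t_n,t_{n+1}]$, which is the first claimed bound (taking $t=t_{n+1}$).

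Next I would convert the energy bound into the $H^2_h$ bound. From the definition \eqref{MBE energy: discrete} we have $\frac{\varepsilon^2}{2}\|\Delta_{\mathcal{N}} u_s^{n+1}\|_{\mathcal{N}}^2 = E_{\mathcal{N}}(u_s^{n+1}) + \left(\frac{1}{2}\ln(1+|\nabla_{\mathcal{N}} u_s^{n+1}|^2), 1\right)_{\mathcal{N}}$. The logarithm term must be controlled from above: using $\ln(1+x) \le x$ pointwise we get $\left(\frac{1}{2}\ln(1+|\nabla_{\mathcal{N}} u_s^{n+1}|^2), 1\right)_{\mathcal{N}} \le \frac{1}{2}\|\nabla_{\mathcal{N}} u_s^{n+1}\|_{\mathcal{N}}^2$, and then $\|\nabla_{\mathcal{N}} u_s^{n+1}\|_{\mathcal{N}}^2 = -(u_s^{n+1}, \Delta_{\mathcal{N}} u_s^{n+1})_{\mathcal{N}} \le \|u_s^{n+1}\|_{\mathcal{N}}\|\Delta_{\mathcal{N}} u_s^{n+1}\|_{\mathcal{N}}$ by Lemma~\ref{IBP}; since $u_s^{n+1}\in\mathcal{M}_0^{\mathcal{N}}$, a discrete Poincar\'e inequality gives $\|u_s^{n+1}\|_{\mathcal{N}} \le C\|\nabla_{\mathcal{N}} u_s^{n+1}\|_{\mathcal{N}} \le C\|\Delta_{\mathcal{N}} u_s^{n+1}\|_{\mathcal{N}}$, so $\|\nabla_{\mathcal{N}} u_s^{n+1}\|_{\mathcal{N}}^2 \le C\|\Delta_{\mathcal{N}} u_s^{n+1}\|_{\mathcal{N}}^2$. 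Combining, $\frac{\varepsilon^2}{2}\|\Delta_{\mathcal{N}} u_s^{n+1}\|_{\mathcal{N}}^2 \le E_{\mathcal{N}}(u_s^0) + C\|\Delta_{\mathcal{N}} u_s^{n+1}\|_{\mathcal{N}}^2$; this would be circular unless the constant is absorbed correctly, so instead I would use Young's inequality $\|\nabla_{\mathcal{N}} u_s^{n+1}\|_{\mathcal{N}}^2 \le \|u_s^{n+1}\|_{\mathcal{N}}\|\Delta_{\mathcal{N}} u_s^{n+1}\|_{\mathcal{N}} \le \frac{\varepsilon^2}{2}\|\Delta_{\mathcal{N}} u_s^{n+1}\|_{\mathcal{N}}^2 + \frac{1}{2\varepsilon^2}\|u_s^{n+1}\|_{\mathcal{N}}^2$ together with Poincar\'e on the last term, but now applied to $\|u_s^{n+1}\|_{\mathcal{N}}$ directly rather than re-expanding. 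The cleanest route: bound the log term by $\frac12\|\nabla_{\mathcal N}u_s^{n+1}\|_{\mathcal N}^2$, then interpolate $\|\nabla_{\mathcal N}u_s^{n+1}\|_{\mathcal N}^2\le \eta\|\Delta_{\mathcal N}u_s^{n+1}\|_{\mathcal N}^2 + C_\eta\|u_s^{n+1}\|_{\mathcal N}^2$ and Poincar\'e again to bound $\|u_s^{n+1}\|_{\mathcal N}^2\le C\|\nabla_{\mathcal N}u_s^{n+1}\|_{\mathcal N}^2\le C\|\Delta_{\mathcal N}u_s^{n+1}\|_{\mathcal N}^2$; choosing $\eta$ small enough relative to $\varepsilon^2$ absorbs the $\|\Delta_{\mathcal N}u_s^{n+1}\|_{\mathcal N}^2$ term into the left side, yielding $\|\Delta_{\mathcal N}u_s^{n+1}\|_{\mathcal N}^2 \le C(\varepsilon)E_{\mathcal N}(u_s^0)$. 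Finally, $\|u_s^{n+1}\|_{H^2_h}^2 \le \|u_s^{n+1}\|_{\mathcal N}^2 + \|\nabla_{\mathcal N}u_s^{n+1}\|_{\mathcal N}^2 + |u_s^{n+1}|_{H^2_h}^2$, and all three pieces are controlled by $\|\Delta_{\mathcal N}u_s^{n+1}\|_{\mathcal N}^2$ (the second by Poincar\'e-type elliptic estimates in Fourier space, the third by $|v|_{H^2_h} \le \|\Delta_{\mathcal N}v\|_{\mathcal N}$ which holds on the periodic grid since $\|D_x^2 v\|_{\mathcal N}^2 + 2\|D_xD_y v\|_{\mathcal N}^2 + \|D_y^2 v\|_{\mathcal N}^2 = \|\Delta_{\mathcal N} v\|_{\mathcal N}^2$ by Parseval). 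This gives $\|u_s^{n+1}(t)\|_{H^2_h}^2 \le C_1$ with $C_1 = C_1(\varepsilon, \Omega, \|u(0)\|_{H^2_h})$, noting $E_{\mathcal N}(u_s^0) \le \frac{\varepsilon^2}{2}\|\Delta_{\mathcal N}u(0)\|_{\mathcal N}^2 \le \frac{\varepsilon^2}{2}\|u(0)\|_{H^2_h}^2$.

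The main obstacle I anticipate is not any single inequality but rather pinning down the initial-step energy estimate cleanly: the functional $\tilde{E}$ was tailored to the two-step recursion \eqref{sETDMs2: discrete} with its Lagrange-extrapolated nonlinearity, whereas the first step \eqref{sETDMs2: discrete-init} uses a plain explicit term, so one must check that the same $A \ge \frac{2+\sqrt{3}}{6}$ (or even a milder condition) suffices to close $\tilde{E}(u_s^1) \le E_{\mathcal{N}}(u_s^0)$, using only the $(\uppercase\expandafter{\romannumeral1})$-type estimate and the interpolation inequality with the same $\lambda = \frac{3+\sqrt{3}}{6}$. A secondary subtlety is making sure the absorption in the second paragraph is genuinely non-circular; the key is that the logarithm upper bound $\ln(1+x)\le x$ goes the "right way" (it makes $E_{\mathcal N}$ a lower bound for $\frac{\varepsilon^2}{2}\|\Delta_{\mathcal N}v\|^2$ minus a lower-order term), so the $\varepsilon$-dependence of the absorbing constant is harmless. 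Everything else — discrete Poincar\'e for zero-mean grid functions, the Parseval identity relating $|v|_{H^2_h}$ to $\|\Delta_{\mathcal N}v\|_{\mathcal N}$ — is standard and can be cited or stated without detailed proof.
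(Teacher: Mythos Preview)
Your argument for the first bound $E_{\mathcal N}(u_s^{n+1})\le E_{\mathcal N}(u_s^0)$ is essentially the paper's: treat the initial step \eqref{sETDMs2: discrete-init} by the same inner-product computation (only the $(\mathrm{I})$-type term appears), verify that the constants close under $A\ge\frac{2+\sqrt 3}{6}$ so that $\tilde E(u_s^1)\le E_{\mathcal N}(u_s^0)$, and then chain Theorem~\ref{thm: energy stab}. The paper writes this out with the explicit choice $\lambda=1$ in the $(\mathrm{I})$-estimate and checks $\frac{\sqrt3-1}{4}\le\frac12$, $\frac{\sqrt3+1}{24}\le A-\frac12$; your sketch is the same idea.

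There is, however, a genuine gap in your derivation of the $H^2_h$ bound. After the estimate $\ln(1+x)\le x$ you end up needing
\[
\tfrac{\varepsilon^2}{2}\|\Delta_{\mathcal N}v\|_{\mathcal N}^2 \le E_{\mathcal N}(u_s^0) + \tfrac12\bigl(\eta + C_\eta\,C_P\bigr)\|\Delta_{\mathcal N}v\|_{\mathcal N}^2,
\]
with $C_\eta=1/(4\eta)$ from interpolation and $C_P$ the (fixed) Poincar\'e constant. The coefficient $\eta+C_P/(4\eta)$ is minimised at $\eta=\sqrt{C_P}/2$ and its minimum value is $\sqrt{C_P}$, which you cannot make smaller than $\varepsilon^2$ for small $\varepsilon$; the absorption is therefore circular exactly in the physically relevant regime. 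The standard repair is to replace $\ln(1+x)\le x$ by the sharper $\ln(1+x)\le \delta x + C_\delta$ (concavity of $\ln$ gives $C_\delta=\ln(1/\delta)-1+\delta$), which introduces an \emph{additive} constant $C_\delta|\Omega|$ but lets you take $\delta$ small enough (e.g.\ $\delta=\varepsilon^2/(2C_P)$) that $\frac{\delta}{2}\|\nabla_{\mathcal N}v\|_{\mathcal N}^2\le \frac{\delta C_P}{2}\|\Delta_{\mathcal N}v\|_{\mathcal N}^2=\frac{\varepsilon^2}{4}\|\Delta_{\mathcal N}v\|_{\mathcal N}^2$ absorbs cleanly. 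The paper sidesteps this entirely by citing \cite[Lemma~3.2 and Remark~3.3]{chen2014linear}, where precisely this coercivity estimate for $E_{\mathcal N}$ is recorded.
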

\begin{proof}
Theorem~\ref{thm: energy stab} implies that
\begin{align}
E_{\mathcal{N}}(u_s^{n+1}) \leq \tilde{E}(u_s^{n+1})\leq \tilde{E}(u_s^{1}).
\end{align}
Taking an inner product with $\frac{{\rm d} u_s^1(t)}{{\rm d} t}$ on both sides of \eqref{sETDMs2: discrete-init} and performing a similar analysis as in \eqref{sETDMs2: middle1}-\eqref{sETDMs2: est1} yields that
\begin{align}
&\left\|\frac{{\rm d} u_s^1(t)}{{\rm d} t}\right\|_{L^2(0,\tau; \ell^2)}^2 + A\tau^2 \left\|\frac{{\rm d}\Delta_{\mathcal{N}} u_s^1 (t)}{{\rm d} t}\right\|_{L^2(0,\tau; \ell^2)}^2  + E_{\mathcal{N}}(u_s^1) - E_{\mathcal{N}}(u_s^0) \nonumber \\
\leq ~& \frac{1}{2}\left\|\frac{{\rm d} u_s^1 (t)}{{\rm d} t}\right\|_{L^2(0,\tau; \ell^2)}^2 + \frac{\tau^2}{2}\left\|\frac{{\rm d} \Delta_{\mathcal{N}} u_s^1 (t)}{{\rm d} t}\right\|_{L^2(0,\tau; \ell^2)}^2.
\end{align}
In addition, the fact $A\geq \frac{2+\sqrt{3}}{6}>\frac{1}{2}$ indicates that
 \begin{align}
 \tilde{E}(u_s^{1}) &\leq \frac{1}{2}\left\|\frac{{\rm d} u_s^1 (t)}{{\rm d} t}\right\|_{L^2(0,\tau; \ell^2)}^2 + \frac{2A-1}{2}\tau^2 \left\|\frac{{\rm d}\Delta_{\mathcal{N}} u_s^1 (t)}{{\rm d} t}\right\|_{L^2(0,\tau; \ell^2)}^2 + E_{\mathcal{N}}(u_s^1) \nonumber \\
 &\leq E_{\mathcal{N}}(u_s^0).
 \end{align}
Therefore, we have $E_{\mathcal{N}}(u_s^{n+1}) \leq E_{\mathcal{N}}(u_s^0)$ for any $0\leq n\leq N_t-1$. Now we can apply Lemma 3.2 and Remark 3.3 in \cite[p. 586]{chen2014linear} to derive the desired conclusion.
\end{proof}

Next we provide a finite time uniform $H^3_h$ bound for the numerical solution $u_s^{n+1}(t)$; this subtle estimate will be used in the optimal rate convergence analysis.
\begin{prop}\label{lm: H3 stab}
Assume that the initial solution $u(0)$ has $H^5_h$-regularity and $A\geq \frac{2+\sqrt{3}}{6}$ . 
Then we have the finite time $H^3_h$ bound for the numerical solution 
\begin{align}
\|u_s^{n+1}(t)\|^2_{H^3_h}\leq C_1, \quad\text{for } 0\leq n\leq N_t-1,
\end{align}
where $C_1$ only depends on $\varepsilon$, $\Omega$ and $\|u_0\|_{H^5_h}$.
\end{prop}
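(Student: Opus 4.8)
The plan is to bootstrap the $H^2_h$ bound of Proposition~\ref{lm: H2 stab} up to $H^3_h$ by testing the evolution equation against a third-order quantity and running a discrete Gronwall argument in $n$. First I would record the consequences of the $H^2_h$ bound that make the nonlinear term manageable: since $\beta(\mathbf v)=\mathbf v/(1+|\mathbf v|^2)$ satisfies $|\beta|\le\tfrac12$ and has bounded derivatives up to second order on all of $\bbR^2$ (so that $\beta,\nabla\beta$ are globally Lipschitz), Proposition~\ref{lm: H2 stab} gives $\|\beta(\nabla_{\mathcal{N}}u_s^{n})\|_{L^\infty(\Omega)}\le\tfrac12$, $\|\beta(\nabla_{\mathcal{N}}u_s^{n})\|_{H^1_h}\le C$ and hence $\|f_{\mathcal{N}}(u_s^{n})\|_{\mathcal{N}}\le C\|u_s^{n}\|_{H^2_h}\le C$, uniformly in $n,h,\tau$. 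I would also extract from the energy analysis the two summabilities that drive the recursion: decay of $\tilde E$ (Theorem~\ref{thm: energy stab}) gives the uniform bound $\|\tfrac{{\rm d}u_s^{n+1}}{{\rm d}t}\|_{L^2(t_n,t_{n+1};\ell^2)}^2\le C$ and $\sum_{n}\tau^2\|\tfrac{{\rm d}\Delta_{\mathcal{N}}u_s^{n+1}}{{\rm d}t}\|_{L^2(t_n,t_{n+1};\ell^2)}^2\le C$, while decay of $\hat E$ (the Remark following Theorem~\ref{thm: energy stab}) gives $\sum_{n}\|\nabla_{\mathcal{N}}(u_s^{n+1}-u_s^{n})\|_{\mathcal{N}}^2\le\sum_{n}\tau\,\|\tfrac{{\rm d}\nabla_{\mathcal{N}}u_s^{n+1}}{{\rm d}t}\|_{L^2(t_n,t_{n+1};\ell^2)}^2\le C$.

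Next I would take the inner product of \eqref{sETDMs2: discrete} with $-\Delta_{\mathcal{N}}\tfrac{{\rm d}u_s^{n+1}}{{\rm d}t}$. Using Lemma~\ref{IBP} (and $\|\nabla_{\mathcal{N}}^2v\|_{\mathcal{N}}=\|\Delta_{\mathcal{N}}v\|_{\mathcal{N}}$, $\|v\|_{H^3_h}^2=\|v\|_{H^2_h}^2+\|\nabla_{\mathcal{N}}\Delta_{\mathcal{N}}v\|_{\mathcal{N}}^2$ on the periodic grid) this yields, on $(t_n,t_{n+1}]$,
\[
\Bigl\|\tfrac{{\rm d}\nabla_{\mathcal{N}}u_s^{n+1}}{{\rm d}t}\Bigr\|_{\mathcal{N}}^2+A\tau^2\Bigl\|\tfrac{{\rm d}\nabla_{\mathcal{N}}\Delta_{\mathcal{N}}u_s^{n+1}}{{\rm d}t}\Bigr\|_{\mathcal{N}}^2+\tfrac{\varepsilon^2}{2}\tfrac{{\rm d}}{{\rm d}t}\|\nabla_{\mathcal{N}}\Delta_{\mathcal{N}}u_s^{n+1}\|_{\mathcal{N}}^2=\bigl(f_{\mathcal{N},L}(t-t_n,u_s^{n},u_s^{n-1}),\Delta_{\mathcal{N}}\tfrac{{\rm d}u_s^{n+1}}{{\rm d}t}\bigr)_{\mathcal{N}}.
\]
Integrating over $[t_n,t_{n+1}]$ and writing $Y_n:=\|\nabla_{\mathcal{N}}\Delta_{\mathcal{N}}u_s^{n}\|_{\mathcal{N}}^2$, the crucial manipulation is to rewrite the right-hand side: using $f_{\mathcal{N},L}=\nabla_{\mathcal{N}}\cdot\mathbf{\hat f}_{\mathcal{N},L}$, the fact that $\mathbf{\hat f}_{\mathcal{N},L}(s-t_n,\cdot,\cdot)$ is affine in $s$ with slope $\tfrac1\tau(\beta(\nabla_{\mathcal{N}}u_s^{n})-\beta(\nabla_{\mathcal{N}}u_s^{n-1}))$, and integration by parts in time, the right-hand side reduces to boundary terms of the form $(f_{\mathcal{N}}(u_s^{j}),\Delta_{\mathcal{N}}u_s^{k})_{\mathcal{N}}$ and $(f_{\mathcal{N}}(u_s^{n}),\Delta_{\mathcal{N}}(u_s^{n+1}-u_s^{n}))_{\mathcal{N}}$, plus an integral term $\tfrac1\tau(f_{\mathcal{N}}(u_s^{n})-f_{\mathcal{N}}(u_s^{n-1}),\int_{t_n}^{t_{n+1}}\Delta_{\mathcal{N}}u_s^{n+1}(t)\,{\rm d}t)_{\mathcal{N}}$.

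The heart of the estimate is then the factor $\|f_{\mathcal{N}}(u_s^{n})-f_{\mathcal{N}}(u_s^{n-1})\|_{\mathcal{N}}$: writing it as $\|\nabla_{\mathcal{N}}\cdot(\beta(\nabla_{\mathcal{N}}u_s^{n})-\beta(\nabla_{\mathcal{N}}u_s^{n-1}))\|_{\mathcal{N}}$ and expanding with the chain rule, the Lipschitz bounds on $\beta,\nabla\beta$ bound it by $C\|\Delta_{\mathcal{N}}(u_s^{n}-u_s^{n-1})\|_{\mathcal{N}}+C\|\nabla_{\mathcal{N}}(u_s^{n}-u_s^{n-1})\|_{L^4(\Omega)}\|\nabla_{\mathcal{N}}^2u_s^{n}\|_{L^4(\Omega)}$; the 2D Gagliardo--Nirenberg inequality \eqref{Gagliardo-Nirenberg} then converts the $L^4$ norms into $\ell^2$ norms of $u_s^{n}-u_s^{n-1}$ (controlled by $\tau^{1/2}\|\tfrac{{\rm d}u_s^{n}}{{\rm d}t}\|_{L^2(t_{n-1},t_n;\ell^2)}$ and the $H^2_h$ bound) together with a power $\|u_s^{n}\|_{H^3_h}^{1/2}\lesssim(Y_n+C)^{1/4}$. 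Substituting these bounds, controlling $\|\Delta_{\mathcal{N}}(u_s^{n+1}-u_s^{n})\|_{\mathcal{N}}$ by $\tau^{1/2}\|\tfrac{{\rm d}\Delta_{\mathcal{N}}u_s^{n+1}}{{\rm d}t}\|_{L^2(t_n,t_{n+1};\ell^2)}$, and using Young's inequality to absorb the $\tfrac{{\rm d}\nabla_{\mathcal{N}}\Delta_{\mathcal{N}}u_s^{n+1}}{{\rm d}t}$ and $\tfrac{{\rm d}\nabla_{\mathcal{N}}u_s^{n+1}}{{\rm d}t}$ contributions into the dissipation on the left, I would arrive at a discrete Gronwall inequality
\[
Y_{n+1}\le(1+C\tau)Y_n+C\tau+C\Bigl(\|\tfrac{{\rm d}u_s^{n}}{{\rm d}t}\|_{L^2(t_{n-1},t_n;\ell^2)}^2+\|\nabla_{\mathcal{N}}(u_s^{n}-u_s^{n-1})\|_{\mathcal{N}}^2+\tau^2\|\tfrac{{\rm d}\Delta_{\mathcal{N}}u_s^{n+1}}{{\rm d}t}\|_{L^2(t_n,t_{n+1};\ell^2)}^2\Bigr),
\]
and the same argument controls $\sup_{t\in[t_n,t_{n+1}]}\|\nabla_{\mathcal{N}}\Delta_{\mathcal{N}}u_s^{n+1}(t)\|_{\mathcal{N}}^2$ by the right-hand side. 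Since the bracketed quantities are summable in $n$, the discrete Gronwall lemma gives $Y_n\le C$ for $n\tau\le T$. The initial steps $u_s^{1},u_s^{2}$ (governed by \eqref{sETDMs2: discrete-init}, whose source is $f_{\mathcal{N}}(u_s^{0})$ instead of an extrapolation) are handled by the same testing procedure; this is where the stronger hypothesis $u(0)\in H^5_h$ enters, since it is needed to bound $Y_0=\|\nabla_{\mathcal{N}}\Delta_{\mathcal{N}}u_0\|_{\mathcal{N}}^2$ and the first-step quantities feeding the recursion. Finally, combining $Y_n\le C$ with Proposition~\ref{lm: H2 stab} and $\|v\|_{H^3_h}^2=\|v\|_{H^2_h}^2+\|\nabla_{\mathcal{N}}\Delta_{\mathcal{N}}v\|_{\mathcal{N}}^2$ yields the asserted bound $\|u_s^{n+1}(t)\|_{H^3_h}^2\le C_1$. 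I expect the main obstacle to be exactly the $\tau$-uniformity of the recursion: a crude bound of the reorganized right-hand side only gives an $O(1)$ increment per step, which accumulates to $O(\tau^{-1})$; obtaining an $O(\tau)$ increment plus a summable remainder genuinely requires the integration by parts in time, the Gagliardo--Nirenberg trade of one derivative of $\beta$ for a half-power of $\|u_s^{n}\|_{H^3_h}$, and the two energy-stability summabilities used together.
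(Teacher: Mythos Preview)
Your route is genuinely different from the paper's and considerably more delicate. The paper tests \eqref{sETDMs2: discrete} against the purely spatial quantity $-\Delta_{\mathcal{N}}^3 u_s^{n+1}(t)$ rather than $-\Delta_{\mathcal{N}}\tfrac{{\rm d}u_s^{n+1}}{{\rm d}t}$. That choice produces on the left the parabolic-smoothing term $\varepsilon^2\|\nabla_{\mathcal{N}}\Delta_{\mathcal{N}}^2 u_s^{n+1}(t)\|_{\mathcal{N}}^2$, an $H^5_h$-level dissipation. On the right one only needs $\|\nabla_{\mathcal{N}}f_{\mathcal{N}}(v)\|_{\mathcal{N}}\le C\|\nabla_{\mathcal{N}}\Delta_{\mathcal{N}}v\|_{\mathcal{N}}$, which follows from the $H^2_h$ bound of Proposition~\ref{lm: H2 stab} via $\|\beta(\nabla\tilde v)\|_{H^2}\le C(\|\nabla\Delta\tilde v\|+\|\Delta\tilde v\|_{L^4}^2)$ and one application of \eqref{Gagliardo-Nirenberg}. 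A single Cauchy--Schwarz then yields the clean increment $Y_{n+1}-Y_n+A\tau^2(Z_{n+1}-Z_n)\le C\varepsilon^{-2}\tau(Y_n+Y_{n-1})$ with $Z_n=\|\nabla_{\mathcal{N}}\Delta_{\mathcal{N}}^2 u_s^{n}\|_{\mathcal{N}}^2$, and discrete Gronwall finishes. No integration by parts in time, no summabilities from energy decay, and no differences $u_s^n-u_s^{n-1}$ are needed. The $H^5_h$ hypothesis enters through the initial value $A\tau^2 Z_0=A\tau^2\|\nabla_{\mathcal{N}}\Delta_{\mathcal{N}}^2 u_s^0\|_{\mathcal{N}}^2$, not through $Y_0$ (which is only an $H^3_h$ quantity, as you observe).

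By contrast, your test function discards the parabolic dissipation and tries to close using only the time-derivative damping together with summabilities inherited from energy decay. As sketched the recursion does not close. The term $\|\tfrac{{\rm d}u_s^{n}}{{\rm d}t}\|_{L^2(t_{n-1},t_n;\ell^2)}^2$ in your increment is only uniformly bounded by the $\tilde E$-decay, not summable in $n$, so summing produces an $O(N_t)$ contribution. Your summabilities $\sum_n\tau^2\|\tfrac{{\rm d}\Delta_{\mathcal{N}}u}{{\rm d}t}\|^2\le C$ and $\sum_n\tau\|\tfrac{{\rm d}\nabla_{\mathcal{N}}u}{{\rm d}t}\|^2\le C$ rely on the extra dissipation in Theorem~\ref{thm: energy stab} and the Remark following it, which vanishes at the endpoint $A=\tfrac{2+\sqrt3}{6}$ allowed by the hypothesis. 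And after your integration by parts in time, the boundary contribution $(f_{\mathcal{N}}(u_s^n)-f_{\mathcal{N}}(u_s^{n-1}),\Delta_{\mathcal{N}}u_s^{n+1})_{\mathcal{N}}$ still contains $\|\Delta_{\mathcal{N}}(u_s^n-u_s^{n-1})\|_{\mathcal{N}}$, whose summability is not delivered by the energy arguments you invoke. The paper's spatial test function sidesteps all of these issues at once by letting the biharmonic diffusion do the work.
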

\begin{proof}
Taking an inner product with $-\Delta_{\mathcal{N}}^3 u_s^{n+1}(t)$ on both sides of \eqref{sETDMs2: discrete} leads to
\begin{align}
&\frac{1}{2}\frac{{\rm d}\|\nabla_{\mathcal{N}}\Delta_{\mathcal{N}}u_s^{n+1}(t)\|_{\mathcal{N}}^2}{{\rm d}t} + \frac{A\tau^2}{2}\frac{{\rm d}\|\nabla_{\mathcal{N}}\Delta_{\mathcal{N}}^2 u_s^{n+1}(t)\|_{\mathcal{N}}^2}{{\rm d}t} \notag\\
&+ \varepsilon^2\|\nabla_{\mathcal{N}}\Delta_{\mathcal{N}}^2 u_s^{n+1}(t)\|_{\mathcal{N}}^2\nonumber\\
=~& -\frac{t-t_{n-1}}{\tau}\Big( \nabla_{\mathcal{N}}f_{\mathcal{N}}(u_s^n), \nabla_{\mathcal{N}}\Delta_{\mathcal{N}}^2 u_s^{n+1}(t) \Big)_{\mathcal{N}}\nonumber\\
&+\frac{t-t_n}{\tau}\Big( \nabla_{\mathcal{N}}f_{\mathcal{N}}(u_s^{n-1}), \nabla_{\mathcal{N}}\Delta_{\mathcal{N}}^2 u_s^{n+1}(t) \Big)_{\mathcal{N}}.\label{temp}
\end{align}
For any $v\in H_h^2(\Omega)$ with periodic boundary conditions, recall that $\tilde{v} = \mathcal{I}_Nv$ is the continuous extension of $v$. By \eqref{I_N convergence} we have
\begin{align}
\| \nabla_{\mathcal{N}}f_{\mathcal{N}}(v) \|_{\mathcal{N}} &= \| \nabla_{\mathcal{N}}\nabla_{\mathcal{N}}\cdot\beta(v) \|_{\mathcal{N}} = \| \nabla \nabla\cdot \mathcal{I}_N (\beta(\nabla \tilde{v})) \| \leq C\|\beta(\nabla \tilde{v})\|_{H^2}.
\end{align}
With an application of elliptic regularity, we get
\begin{equation}
\|\beta(\nabla \tilde{v})\|_{H^2}\leq C(\| \Delta \beta(\nabla \tilde{v})\| + \|\beta(\nabla \tilde{v})\|), \quad \text{for }\tilde{v} = \tilde{u}_s^n, ~\tilde{u}_s^{n+1}.
\end{equation}
We also notice that $\|\beta(\nabla \tilde{v})\|\leq \|\nabla \tilde{v}\|\leq C\|\Delta \tilde{v}\|$, and
\begin{align}
| \Delta \beta (\nabla\tilde{v})|
&\leq \frac{|\nabla\Delta \tilde{v}|}{1+|\nabla\tilde{v}|^2} + \frac{6|\Delta\tilde{v}|^2|\nabla\tilde{v}|}{(1+|\nabla\tilde{v}|^2)^2} + \frac{2|\nabla\tilde{v}|^2|\nabla\Delta\tilde{v}|}{(1+|\nabla\tilde{v}|^2)^2} + \frac{8|\nabla\tilde{v}|^3|\Delta\tilde{v}|^2}{(1+|\nabla\tilde{v}|^2)^3}\notag\\
&\leq 3|\nabla\Delta \tilde{v}| + 14|\Delta\tilde{v}|^2.
\end{align}
Also, since $\Delta \tilde{v}$ has zero mean because of the periodic boundary conditions, the Poincar$\rm\acute{e}$ inequality $\|\Delta \tilde{v}\| \leq C\|\nabla\Delta \tilde{v}\|$ is available. Combining this estimate with \eqref{Gagliardo-Nirenberg}, we obtain
\begin{align}
\|\beta(\nabla \tilde{v})\|_{H^2}
&\leq C(\|\nabla\Delta \tilde{v}\| + \|\Delta \tilde{v}\|_{L^{4}}^2)\leq C\|\nabla\Delta \tilde{v}\| + C\|\Delta \tilde{v}\| \|\Delta \tilde{v}\|_{H^1}\leq C\|\nabla\Delta \tilde{v}\|,\label{beta H2 est}
\end{align}
in which we have used the $H^2$ bound of $\tilde{v} = \tilde{u}_s^n, ~\tilde{u}_s^{n+1}$. Substituting the above estimates into \eqref{temp} and applying Lemma~\ref{lm: H2 stab}, we see that
\begin{align}
&\frac{1}{2}\frac{{\rm d}\|\nabla_{\mathcal{N}}\Delta_{\mathcal{N}}u_s^{n+1}(t)\|_{\mathcal{N}}^2}{{\rm d}t} + \frac{A\tau^2}{2}\frac{{\rm d}\|\nabla_{\mathcal{N}}\Delta_{\mathcal{N}}^2 u_s^{n+1}(t)\|_{\mathcal{N}}^2}{{\rm d}t} \notag\\
&+ \varepsilon^2\|\nabla_{\mathcal{N}}\Delta_{\mathcal{N}}^2 u_s^{n+1}(t)\|_{\mathcal{N}}^2\nonumber\\
\leq ~& \frac{\varepsilon^2}{2}\|\nabla_{\mathcal{N}}\Delta_{\mathcal{N}}^2 u_s^{n+1}(t)\|_{\mathcal{N}}^2 + C\varepsilon^{-2}(\|\nabla_{\mathcal{N}}\Delta_{\mathcal{N}}u_s^{n}\|_{\mathcal{N}}^2+\|\nabla_{\mathcal{N}}\Delta_{\mathcal{N}}u_s^{n-1}\|_{\mathcal{N}}^2).
\end{align}
In turn, an integration from $t_n$ to $t_{n+1}$ implies that
\begin{align}
&\frac{1}{2}\left(\|\nabla_{\mathcal{N}}\Delta_{\mathcal{N}}u_s^{n+1}\|_{\mathcal{N}}^2 - \|\nabla_{\mathcal{N}}\Delta_{\mathcal{N}}u_s^{n}\|_{\mathcal{N}}^2\right) \notag\\
& \quad + \frac{A\tau^2}{2}\left(\|\nabla_{\mathcal{N}}\Delta_{\mathcal{N}}^2 u_s^{n+1}\|_{\mathcal{N}}^2 - \|\nabla_{\mathcal{N}}\Delta_{\mathcal{N}}^2 u_s^{n}\|_{\mathcal{N}}^2\right)\nonumber\\
\leq ~& C\varepsilon^{-2}\tau(\|\nabla_{\mathcal{N}}\Delta_{\mathcal{N}}u_s^{n}\|_{\mathcal{N}}^2+\|\nabla_{\mathcal{N}}\Delta_{\mathcal{N}}u_s^{n-1}\|_{\mathcal{N}}^2).
\end{align}
A summation from $n=1$ to $n$ shows that
\begin{align*}
&\frac{1}{2}\|\nabla_{\mathcal{N}}\Delta_{\mathcal{N}}u_s^{n+1}\|_{\mathcal{N}}^2 + \frac{A\tau^2}{2}\|\nabla_{\mathcal{N}}\Delta_{\mathcal{N}}^2 u_s^{n+1}\|_{\mathcal{N}}^2\nonumber\\
\leq ~& C\tau\varepsilon^{-2}\sum_{i=0}^n \|\nabla_{\mathcal{N}}\Delta_{\mathcal{N}}u_s^{i}\|_{\mathcal{N}}^2 + \frac{1}{2}\|\nabla_{\mathcal{N}}\Delta_{\mathcal{N}}u_s^{0}\|_{\mathcal{N}}^2 + \frac{A\tau^2}{2}\|\nabla_{\mathcal{N}}\Delta_{\mathcal{N}}^2 u_s^{0}\|_{\mathcal{N}}^2.
\end{align*}
An application pf the discrete Gronwall inequality yields
\begin{align}
\|\nabla_{\mathcal{N}}\Delta_{\mathcal{N}}u_s^{n+1}\|_{\mathcal{N}} \leq C,
\end{align}
where $C$ depends only on $\varepsilon, ~u(0)$ and $T$.
\end{proof}

\section{Error analysis of the sETDMs2 scheme}\label{sec: error analysis}
The goal of this section is to provide an optimal rate convergence analysis for the sETDMs2 scheme \eqref{sETDMs2}. Recall that $h=\frac{L}{2N}$, $\tau = \frac{T}{N_t}$, $\beta(\mathbf{v}) = \frac{\mathbf{v}}{1+|\mathbf{v}|^2}$.

The following inverse estimate is needed in the error analysis.
\begin{lem}\label{inverse estimate-2}
For any $u\in \mathcal{B}^{N}$ and $2\leq p\leq\infty$,
\begin{align}
|u|_{W^{m,p}(\Omega)}&\leq Ch^{-k-\left(\frac{1}{2}-\frac{1}{p}\right)d}\|u\|_{H^{m-k}(\Omega)}, ~0\leq k\leq m.
\end{align}
\end{lem}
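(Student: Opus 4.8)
The plan is to derive this from the Bernstein inequality of Lemma~\ref{inverse estimate} together with a Nikolskii-type inequality comparing the $L^p$ and $L^2$ norms on the finite-dimensional space $\mathcal{B}^N$, using the identity $h=\frac{L}{2N}$ so that powers of $N$ become powers of $h^{-1}$.

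First I would establish, for every $\varphi\in\mathcal{B}^N$ and $2\le p\le\infty$, the Nikolskii estimate
\begin{equation}
\|\varphi\|_{L^p(\Omega)}\le C N^{\left(\frac12-\frac1p\right)d}\|\varphi\|_{L^2(\Omega)}.
\label{eq:nikolskii-plan}
\end{equation}
For $p=\infty$ this follows from Parseval's identity and Cauchy--Schwarz: $\varphi$ has only $O(N^d)$ nonzero Fourier coefficients, so $\|\varphi\|_{L^\infty}\le\sum_{\mathbf k}|\hat\varphi_{\mathbf k}|\le\bigl(\#\text{modes}\bigr)^{1/2}\bigl(\sum_{\mathbf k}|\hat\varphi_{\mathbf k}|^2\bigr)^{1/2}\le CN^{d/2}\|\varphi\|_{L^2}$. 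For $2<p<\infty$ one interpolates, $\|\varphi\|_{L^p}\le\|\varphi\|_{L^2}^{2/p}\|\varphi\|_{L^\infty}^{1-2/p}$, and inserts the endpoint bound to obtain \eqref{eq:nikolskii-plan}. Alternatively, \eqref{eq:nikolskii-plan} can be proved by passing to the aliasing-free discrete norms on the $(2N)^d$ grid and invoking the elementary $\ell^\infty$--$\ell^2$ inverse inequality there, which is the route closest to the spectral-method references already cited.

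Next, fix a multi-index $\alpha$ with $|\alpha|=m$ and write $D_\alpha=D_\beta D_\gamma$ with $|\beta|=k$ and $|\gamma|=m-k$, which is possible for any $0\le k\le m$. Since differentiation maps $\mathcal{B}^N$ into itself, $D_\gamma u\in\mathcal{B}^N$, and iterating Lemma~\ref{inverse estimate} in each coordinate direction gives $\|D_\beta(D_\gamma u)\|_{L^p(\Omega)}\le CN^{k}\|D_\gamma u\|_{L^p(\Omega)}$. Applying \eqref{eq:nikolskii-plan} to $D_\gamma u\in\mathcal{B}^N$ and then bounding $\|D_\gamma u\|_{L^2(\Omega)}\le\|u\|_{H^{m-k}(\Omega)}$ yields $\|D_\alpha u\|_{L^p(\Omega)}\le CN^{k+\left(\frac12-\frac1p\right)d}\|u\|_{H^{m-k}(\Omega)}$. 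Summing the squares of these bounds over all $\alpha$ with $|\alpha|=m$ and using $N=\frac{L}{2h}$, hence $N^{k+\left(\frac12-\frac1p\right)d}\le Ch^{-k-\left(\frac12-\frac1p\right)d}$, produces the asserted inequality.

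The only non-routine ingredient is the Nikolskii inequality \eqref{eq:nikolskii-plan}; the rest is bookkeeping with Lemma~\ref{inverse estimate} and the scaling $N\sim h^{-1}$. The mild subtlety to watch is that every intermediate function ($D_\gamma u$, as well as the partial derivatives produced while iterating Bernstein) indeed remains in $\mathcal{B}^N$, so that both Lemma~\ref{inverse estimate} and \eqref{eq:nikolskii-plan} apply at each step; this holds because differentiating a trigonometric polynomial only multiplies each Fourier mode by a bounded factor and does not enlarge the mode set.
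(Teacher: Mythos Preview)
Your argument is correct and complete, but the route differs from the paper's. You prove the Nikolskii inequality $\|\varphi\|_{L^p}\le CN^{(\frac12-\frac1p)d}\|\varphi\|_{L^2}$ directly on $\mathcal{B}^N$ by counting Fourier modes (for $p=\infty$) and interpolating, then combine it with Bernstein. The paper instead obtains the same $L^p$--$L^2$ comparison indirectly: it invokes the general Gagliardo--Nirenberg/Agmon interpolation inequalities \eqref{Gagliardo-Nirenberg}--\eqref{Agmon} to bound $|u|_{W^{m,p}}$ by $|u|_{H^m}^{1-\theta}\|u\|_{H^{m+k_0}}^{\theta}$ for an auxiliary integer $k_0$, then uses Bernstein to control $\|u\|_{H^{m+k_0}}$ by $h^{-k_0}|u|_{H^m}$; this yields the $k=0$ case, and a further application of Bernstein gives general $k$. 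Your approach is more elementary and self-contained, exploiting only the finite-dimensionality of $\mathcal{B}^N$; the paper's approach is slightly slicker in that it reuses the Sobolev interpolation inequalities already stated for other purposes, at the cost of introducing the auxiliary exponent $k_0$. Either way the structure ``$L^p$ versus $L^2$ comparison costing $N^{(\frac12-\frac1p)d}$, plus Bernstein costing $N^k$'' is the same.
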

\begin{proof}
By \eqref{Gagliardo-Nirenberg} and \eqref{Agmon} we have
\begin{align*}
\|\partial_x^m u\|_{L^p(\Omega)}\leq \|\partial_x^m u\|_{L^2(\Omega)}^{1-\theta}\|\partial_x^m u\|_{H^{k_0}(\Omega)}^{\theta}, ~\theta = \left(\frac{1}{2}-\frac{1}{p}\right)\frac{d}{k_0},
\end{align*}
where the integer ${k_0}\ge (\frac{1}{2}-\frac{1}{p})d$ for $2\le p<\infty$ and $k_0>\frac{d}{2}$ for $p=\infty$. Hence,
\begin{align}
|u|_{W^{m,p}(\Omega)}\leq |u|_{H^{m}(\Omega)}^{1-\theta}\|u\|_{H^{m+k_0}(\Omega)}^{\theta}.\label{inv mid-1}
\end{align}
Meanwhile, an application of~\eqref{inverse estimate-0} (in Lemma~\ref{inverse estimate}) implies that
\begin{align}
|u|_{H^{m+k_0}(\Omega)}\leq Ch^{-k_0}|u|_{H^{m}(\Omega)}, ~k_0\geq 1.\label{inv mid-2}
\end{align}
Combining \eqref{inv mid-1} and \eqref{inv mid-2} leads to
\begin{align}
|u|_{W^{m,p}(\Omega)}\leq Ch^{-\left(\frac{1}{2}-\frac{1}{p}\right)d}\|u\|_{H^{m}(\Omega)}.\label{inv mid-3}
\end{align}
For $k\ge 1$, we simply apply \eqref{inverse estimate} again:
\begin{align}
|u|_{W^{m,p}(\Omega)}\leq Ch^{-k-\left(\frac{1}{2}-\frac{1}{p}\right)d}\|u\|_{H^{m-k}(\Omega)}. \label{inv mid-4}
\end{align}
We get the conclusion.
\end{proof}

The following estimate is needed in the analysis of the nonlinear term.
\begin{lem}\label{lm: sETDMs2 nonlinear - middle step}
For any $v, ~w\in\mathcal{M}_0^{\mathcal{N}}\cap H^3_h, ~g\in\mathcal{M}^{\mathcal{N}}\cap H^2_h$, we have
\begin{align*}
&\left\|\nabla_{\mathcal{N}}\cdot\left(\frac{\nabla_{\mathcal{N}}g}{1+|\nabla_{\mathcal{N}} v|^2}\right)\right\|_{\mathcal{N}}
\leq C(1+h)(\|\Delta_{\mathcal{N}}g\|_{\mathcal{N}} + \|g\|_{\mathcal{N}}),\\
&\left\| \nabla_{\mathcal{N}} \cdot \left(\frac{\nabla_{\mathcal{N}} w \cdot \nabla_{\mathcal{N}}g (\nabla_{\mathcal{N}} w + \nabla_{\mathcal{N}} v)}{(1+|\nabla_{\mathcal{N}} v|^2)(1+|\nabla_{\mathcal{N}} w|^2)}\right) \right\|_{\mathcal{N}}
\leq C(1+h)(\|\Delta_{\mathcal{N}}g\|_{\mathcal{N}} + \|g\|_{\mathcal{N}}),
\end{align*}
in which $C$ depends only on $\Omega$, $\|v\|_{H^3_h}$ and $\|w\|_{H^3_h}$.
\end{lem}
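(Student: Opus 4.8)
The plan is to transfer both quantities to the Fourier side. Write $\tilde v=\mathcal{I}_N v$, $\tilde w=\mathcal{I}_N w$, $\tilde g=\mathcal{I}_N g$. Since the spectral derivatives agree on $\Omega_{\mathcal{N}}$ with the corresponding derivatives of these trigonometric interpolants, and the $2N\times 2N$ grid integrates products of trigonometric polynomials of degree $\le N$ exactly, one has $\|\phi\|_{\mathcal{N}}=\|\mathcal{I}_N\phi\|_{L^2}$ for any grid function $\phi$; in particular $\|\tilde v\|_{H^3}\le C\|v\|_{H^3_h}$, $\|\tilde w\|_{H^3}\le C\|w\|_{H^3_h}$, $\|\nabla\tilde g\|_{L^2}=\|\nabla_{\mathcal{N}}g\|_{\mathcal{N}}$, $\|\Delta\tilde g\|_{L^2}=\|\Delta_{\mathcal{N}}g\|_{\mathcal{N}}$, and by Lemma~\ref{IBP} together with periodic elliptic regularity $\|\tilde g\|_{H^2}\le C(\|\Delta_{\mathcal{N}}g\|_{\mathcal{N}}+\|g\|_{\mathcal{N}})$; also the $d=2$ embeddings $H^1\hookrightarrow L^4$ and $H^s\hookrightarrow L^\infty$ ($s>1$) give $\|\nabla\tilde v\|_{L^\infty}+\|\nabla\tilde w\|_{L^\infty}+\|\nabla^2\tilde v\|_{L^4}+\|\nabla^2\tilde w\|_{L^4}\le C(\|v\|_{H^3_h},\|w\|_{H^3_h})$. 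The key reduction is that in each of the two estimates the grid vector field $\mathbf{q}$ inside the norm equals, at every node of $\Omega_{\mathcal{N}}$, a fixed smooth periodic field $\Psi$ obtained by replacing each $\nabla_{\mathcal{N}}v,\nabla_{\mathcal{N}}w,\nabla_{\mathcal{N}}g$ by $\nabla\tilde v,\nabla\tilde w,\nabla\tilde g$; hence $\mathcal{I}_N\mathbf{q}=\mathcal{I}_N\Psi$, and since $\nabla\cdot\mathcal{I}_N\Psi\in\mathcal{B}^{N}$ exact quadrature yields
\[
\|\nabla_{\mathcal{N}}\cdot \mathbf{q}\|_{\mathcal{N}}
=\|\nabla\cdot\mathcal{I}_N\Psi\|_{L^2}
\le\|\nabla\cdot\Psi\|_{L^2}+\|\nabla\cdot(\mathcal{I}_N-I)\Psi\|_{L^2}
=:T_1+T_2 .
\]

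For $T_1$ I would simply carry out the divergence and chain rule in the continuous variables. In the first estimate $\Psi=\rho\,\nabla\tilde g$ with $\rho=(1+|\nabla\tilde v|^2)^{-1}$; using $0<\rho\le 1$ and $|\nabla\rho|\le|\nabla^2\tilde v|$ one sees $\nabla\cdot\Psi$ is a combination of $\rho\,\Delta\tilde g$ and $(\nabla\rho)\cdot\nabla\tilde g$, so $T_1\le\|\Delta\tilde g\|_{L^2}+\|\nabla^2\tilde v\|_{L^4}\|\nabla\tilde g\|_{L^4}\le C(\|v\|_{H^3_h})\|\tilde g\|_{H^2}$ by $H^1\hookrightarrow L^4$. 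In the second estimate $\Psi=(\nabla\tilde w\cdot\nabla\tilde g)\,\mathbf{c}$ with $\mathbf{c}=(\nabla\tilde w+\nabla\tilde v)\big[(1+|\nabla\tilde v|^2)(1+|\nabla\tilde w|^2)\big]^{-1}$, which together with $\nabla\mathbf{c}$ is controlled in $L^\infty$, resp.\ $L^4$, by $\|v\|_{H^3_h},\|w\|_{H^3_h}$; differentiating produces terms of the type $(\nabla^2\tilde w)\nabla\tilde g\cdot\mathbf{c}$, $(\nabla\tilde w\cdot\nabla^2\tilde g)\,\mathbf{c}$ and $(\nabla\tilde w\cdot\nabla\tilde g)\,\nabla\cdot\mathbf{c}$, each $\le C(\|v\|_{H^3_h},\|w\|_{H^3_h})\|\tilde g\|_{H^2}$ by the same $d=2$ embeddings. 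In both cases $T_1\le C(\|\Delta_{\mathcal{N}}g\|_{\mathcal{N}}+\|g\|_{\mathcal{N}})$, with $C$ depending only on $\Omega$, $\|v\|_{H^3_h}$, $\|w\|_{H^3_h}$; the $\|g\|_{\mathcal{N}}$ term is retained because $g$ is not assumed mean-free and $\|\nabla\tilde g\|_{L^2}\le C(\|\Delta_{\mathcal{N}}g\|_{\mathcal{N}}+\|g\|_{\mathcal{N}})$.

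For $T_2$ I would invoke the interpolation estimate \eqref{I_N convergence} with $m=2$, $k=1$, giving $T_2\le Ch\,\|\Psi\|_{H^2}$. This is the main obstacle: $\|\Psi\|_{H^2}$ is \emph{not} bounded uniformly in $h$, since $\nabla^2\Psi$ contains $\nabla^3\tilde g$ and high derivatives of $\tilde v,\tilde w$, controllable only after paying the inverse estimates of Lemmas~\ref{inverse estimate} and \ref{inverse estimate-2}, e.g.\ $\|\nabla^3\tilde g\|_{L^2}\le Ch^{-1}\|\tilde g\|_{H^2}$, $\|\nabla\tilde g\|_{L^\infty}\le Ch^{-1}\|\tilde g\|_{H^2}$, $\|\nabla^2\tilde v\|_{L^\infty}\le Ch^{-1}\|v\|_{H^2_h}$. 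The crux of the proof is the careful bookkeeping showing that every summand of $\nabla^2\Psi$ carries a net factor no worse than $h^{-1}$, so that $h\|\Psi\|_{H^2}\le C(\|\Delta_{\mathcal{N}}g\|_{\mathcal{N}}+\|g\|_{\mathcal{N}})$ with $C=C(\Omega,\|v\|_{H^3_h},\|w\|_{H^3_h})$; the residual factor $(1+h)$ in the statement leaves the needed slack. Adding the bounds for $T_1$ and $T_2$ then gives both claimed inequalities. (The $H^3_h$ control of $v$ and $w$ used throughout is part of the hypotheses; in the error analysis it is supplied by Proposition~\ref{lm: H3 stab}.)
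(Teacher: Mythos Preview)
Your proposal is correct and follows essentially the same route as the paper: both pass to the continuous interpolants $\tilde v,\tilde w,\tilde g$, write the discrete quantity as $\|\nabla\cdot\mathcal{I}_N\Psi\|_{L^2}$, split via the triangle inequality into $\|\nabla\cdot\Psi\|_{L^2}$ plus the aliasing contribution $\|\nabla\cdot(\mathcal{I}_N-I)\Psi\|_{L^2}\le Ch\|\Psi\|_{H^2}$, and then use Sobolev embeddings for the first part and the inverse estimates of Lemmas~\ref{inverse estimate}--\ref{inverse estimate-2} to show $\|\Psi\|_{H^2}\le C(1+h^{-1})\|\tilde g\|_{H^2}$ for the second. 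The paper carries out the ``bookkeeping'' you defer, expanding $\Delta\Psi$ term by term and placing each factor in the appropriate $L^p$ space (e.g.\ $|\tilde g|_{H^3}\le Ch^{-1}|\tilde g|_{H^2}$, $|\tilde g|_{1,\infty}\le Ch^{-1}\|\tilde g\|_{L^\infty}$, $|\tilde v|_{2,4}\le C\|\tilde v\|_{H^3}$), confirming that no summand exceeds $h^{-1}$---exactly as you anticipated.
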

\begin{proof}
Define $\tilde{v} = \mathcal{I}_Nv, ~\tilde{w} = \mathcal{I}_Nw, ~\tilde{g} = \mathcal{I}_Ng$ as the continuous extension of $v, ~w, ~g$, respectively. Thanks to \eqref{I_N convergence}, we have
\begin{align}
\left\|\nabla_{\mathcal{N}}\cdot\left(\frac{\nabla_{\mathcal{N}}g}{1+|\nabla_{\mathcal{N}} v|^2}\right)\right\|_{\mathcal{N}}
&= \left\|\nabla\cdot\mathcal{I}_N\left(\frac{\nabla \tilde{g}}{1+|\nabla \tilde{v}|^2}\right)\right\| \nonumber\\
&\leq \left\|\nabla\cdot\left(\frac{\nabla \tilde{g}}{1+|\nabla \tilde{v}|^2}\right)\right\| + Ch\left\|\frac{\nabla \tilde{g}}{1+|\nabla \tilde{v}|^2}\right\|_{H^2}.\label{temp2}
\end{align}
For the first term on the RHS, using the Gagliardo-Nirenberg inequality \eqref{Gagliardo-Nirenberg}, the interpolation inequality, the $H^3$ bound of $\|\tilde{v}\|$ and the Young inequality, we get
\begin{align}
\left\|\nabla\cdot\left(\frac{\nabla \tilde{g}}{1+|\nabla \tilde{v}|^2}\right)\right\|
&\leq C(|\tilde{g}|_{H^2} + |\tilde{g}|_{1,4}|\tilde{v}|_{2,4}) \leq C|\tilde{g}|_{H^2} + C\|\tilde{g}\|_{H^1}^{\frac{1}{2}}\|\tilde{g}\|_{H^2}^{\frac{1}{2}}\|\tilde{v}\|_{H^3}\nonumber\\
&\leq C|\tilde{g}|_{H^2} + C\|\tilde{g}\|_{H^2}\leq C\|\tilde{g}\| + C\|\Delta\tilde{g}\|.\label{44}
\end{align}
For the second term, the elliptic regularity and the interpolation inequality shows that
\begin{align*}
\left\|\frac{\nabla \tilde{g}}{1+|\nabla \tilde{v}|^2}\right\|_{H^2}
&\leq C\left(\left\| \Delta\left(\frac{\nabla \tilde{g}}{1+|\nabla \tilde{v}|^2}\right) \right\|  + \left\|\frac{\nabla \tilde{g}}{1+|\nabla \tilde{v}|^2}\right\| \right)\\
&\leq C\left(\left\| \Delta\left(\frac{\nabla \tilde{g}}{1+|\nabla \tilde{v}|^2}\right) \right\|  + \|\tilde{g}\| +  \|\Delta\tilde{g}\| \right).
\end{align*}
On the other hand, we see that
\begin{align}
\left| \Delta\left(\frac{\nabla \tilde{g}}{1+|\nabla \tilde{v}|^2}\right) \right|
&\leq C(|\nabla\Delta\tilde{g}| + |\nabla\nabla\tilde{g}||\nabla\nabla\tilde{v}| + |\nabla\tilde{g}| |\nabla\nabla\tilde{v}|^2 + |\nabla\tilde{g}||\Delta\nabla\tilde{v}|).
\end{align}
Then we obtain
\begin{align}
\left\| \Delta\left(\frac{\nabla \tilde{g}}{1+|\nabla \tilde{v}|^2}\right) \right\|
&\leq C(|\tilde{g}|_{H^{3}} + |\tilde{g}|_{2,4}|\tilde{v}|_{2,4} + |\tilde{g}|_{1,6}|\tilde{v}|_{2,6}^2  + |\tilde{g}|_{1,\infty} |\tilde{v}|_{H^{3}}).
\end{align}
Applying Lemma~\ref{inverse estimate}, Lemma~\ref{inverse estimate-2} and the Sobolev inequality, we get
\begin{align}
&~\left\| \Delta\left(\frac{\nabla \tilde{g}}{1+|\nabla \tilde{v}|^2}\right) \right\| \nonumber\\
\leq ~& Ch^{-1}|\tilde{g}|_{H^{2}} + Ch^{-\frac{1}{2}}\|\tilde{g}\|_{H^2}\|\tilde{v}\|_{H^3}+ C\|\tilde{g}\|_{H^2}\|\tilde{v}\|_{H^{3}}^{2} + Ch^{-1}|\tilde{g}|_{L^{\infty}}|\tilde{v}|_{H^{3}} \nonumber\\
\leq ~& Ch^{-1}|\tilde{g}|_{H^{2}} + Ch^{-\frac{1}{2}}\|\tilde{g}\|_{H^2}\|\tilde{v}\|_{H^3} + C\|\tilde{g}\|_{H^{2}}\|\tilde{v}\|_{H^{3}}^{2} + Ch^{-1}\|\tilde{g}\|_{H^{2}}\|\tilde{v}\|_{H^3} \nonumber\\
\leq ~& C(h^{-1}+1)\|\tilde{g}\|_{H^{2}}.\label{complex est}
\end{align}
Substituting the above estimate into \eqref{temp2}, we arrive at
\begin{align}
\left\|\nabla_{\mathcal{N}}\cdot\left(\frac{\nabla_{\mathcal{N}}g}{1+|\nabla_{\mathcal{N}} v|^2}\right)\right\|_{\mathcal{N}}
\leq~& C\|\Delta\tilde{g}\| + C\|\tilde{g}\| + C(1+h)\|\tilde{g}\|_{H^{2}}\nonumber\\
\leq ~& C(1+h)\|\Delta \tilde{g}\| + C(1+h)\|\tilde{g}\|\notag\\
= ~& C(1+h)(\|\Delta_{\mathcal{N}}g\|_{\mathcal{N}} + \|g\|_{\mathcal{N}}).
\end{align}
Similar estimates can be applied on the second part of the conclusion:
\begin{align}
&\left\| \nabla_{\mathcal{N}} \cdot \left(\frac{\nabla_{\mathcal{N}} w \cdot \nabla_{\mathcal{N}}g (\nabla_{\mathcal{N}} w + \nabla_{\mathcal{N}} v)}{(1+|\nabla_{\mathcal{N}} v|^2)(1+|\nabla_{\mathcal{N}} w|^2)}\right) \right\|_{\mathcal{N}} \nonumber\\
=~& \left\| \nabla \cdot \mathcal{I}_N \left(\frac{\nabla \tilde{w} \cdot \nabla g (\nabla \tilde{w} + \nabla \tilde{v})}{(1+|\nabla \tilde{v}|^2)(1+|\nabla \tilde{w}|^2)}\right) \right\| \nonumber\\
\leq~& \left\| \nabla \cdot \left(\frac{\nabla \tilde{w} \cdot \nabla g (\nabla \tilde{w} + \nabla \tilde{v})}{(1+|\nabla \tilde{v}|^2)(1+|\nabla \tilde{w}|^2)}\right) \right\| + Ch \left\| \frac{\nabla \tilde{w} \cdot \nabla g (\nabla \tilde{w} + \nabla \tilde{v})}{(1+|\nabla \tilde{v}|^2)(1+|\nabla \tilde{w}|^2)}\right\|_{H^2}.\label{temp3}
\end{align}
Analyzing the first term on the RHS in a similar fashion as in~\eqref{44}, we have
\begin{align}
& \left\| \nabla \cdot \left(\frac{\nabla \tilde{w} \cdot \nabla g (\nabla \tilde{w} + \nabla \tilde{v})}{(1+|\nabla \tilde{v}|^2)(1+|\nabla \tilde{w}|^2)}\right) \right\| \nonumber\\
\leq~& C(|\tilde{w}|_{2,4}|\tilde{g}|_{1,4} + |\tilde{g}|_{H^2} + |\tilde{g}|_{1,4}(|\tilde{w}|_{2,4}+|\tilde{v}|_{2,4}) )
\leq C\|\Delta\tilde{g}\|   + C\|\tilde{g}\|.\label{51}
\end{align}
The second term can also be analyzed by the elliptic regularity and the interpolation inequality:
\begin{align*}
&\left\| \frac{\nabla \tilde{w} \cdot \nabla g (\nabla \tilde{w} + \nabla \tilde{v})}{(1+|\nabla \tilde{v}|^2)(1+|\nabla \tilde{w}|^2)}\right\|_{H^2} \\
\leq ~&C\left\| \Delta\left(\frac{\nabla \tilde{w} \cdot \nabla g (\nabla \tilde{w} + \nabla \tilde{v})}{(1+|\nabla \tilde{v}|^2)(1+|\nabla \tilde{w}|^2)}\right)\right\| + C\left\| \frac{\nabla \tilde{w} \cdot \nabla g (\nabla \tilde{w} + \nabla \tilde{v})}{(1+|\nabla \tilde{v}|^2)(1+|\nabla \tilde{w}|^2)}\right\| \\
\leq ~& C\left\| \Delta\left(\frac{\nabla \tilde{w} \cdot \nabla g (\nabla \tilde{w} + \nabla \tilde{v})}{(1+|\nabla \tilde{v}|^2)(1+|\nabla \tilde{w}|^2)}\right)\right\| + C(\|\tilde{g}\| +  |\tilde{g}|_{H^2}).
\end{align*}
Also notice that
\begin{align}
&\left\| \Delta\left(\frac{\nabla \tilde{w} \cdot \nabla g (\nabla \tilde{w} + \nabla \tilde{v})}{(1+|\nabla \tilde{v}|^2)(1+|\nabla \tilde{w}|^2)}\right)\right\| \nonumber\\
\leq~& C|\tilde{w}|_{H^3}|\tilde{g}|_{1,\infty} + C|\tilde{g}|_{H^3} + C|\tilde{g}|_{1,4}(|\tilde{w}|_{3,4} + C|\tilde{v}|_{3,4}) + C|\tilde{g}|_{1,6}|\tilde{w}|_{2,6}^2 \nonumber\\
& + C|\tilde{g}|_{1,6}|\tilde{v}|_{2,6}^2 + C|\tilde{g}|_{1,6}|\tilde{w}|_{2,6}|\tilde{v}|_{2,6} + C|\tilde{g}|_{2,4}\left( |\tilde{w}|_{2,4} + |\tilde{v}|_{2,4} \right).
\end{align}
Again we apply Lemma~\ref{inverse estimate}, \eqref{Gagliardo-Nirenberg}, \eqref{Agmon} and the Sobolev inequality as in \eqref{complex est}:
\begin{align}
&\left\| \frac{\nabla \tilde{w} \cdot \nabla g (\nabla \tilde{w} + \nabla \tilde{v})}{(1+|\nabla \tilde{v}|^2)(1+|\nabla \tilde{w}|^2)}\right\|_{H^2} \nonumber\\
\leq ~& C(\|\tilde{g}\| +  |\tilde{g}|_{H^2}) + Ch^{-1}\|\tilde{w}\|_{H^3}\|\tilde{g}\|_{L^{\infty}} + Ch^{-1}\|\tilde{g}\|_{H^2}  \nonumber\\
& + Ch^{-1}\|\tilde{g}\|_{H^{1}}(\|\tilde{w}\|_{H^{3}} + \|\tilde{v}\|_{H^{3}}) \notag\\
& + C\|\tilde{g}\|_{H^2}(\|\tilde{w}\|_{H^3}^{2} + \|\tilde{v}\|_{H^3}^{2} +\|\tilde{w}\|_{H^3}\|\tilde{v}\|_{H^3})\notag\\
& + Ch^{-1}\|\tilde{g}\|_{H^2}\left( \|\tilde{w}\|_{H^2} + \|\tilde{v}\|_{H^2} \right) \nonumber\\
\leq ~& C(1+h^{-1})\|\tilde{g}\|_{H^2}\leq C(1+h^{-1})(\|\tilde{g}\| + \|\Delta\tilde{g}\|).\label{53}
\end{align}
Substituting \eqref{51} and \eqref{53} into \eqref{temp3} yields that
\begin{align}
&\left\| \nabla_{\mathcal{N}} \cdot \left(\frac{\nabla_{\mathcal{N}} w \cdot \nabla_{\mathcal{N}}g (\nabla_{\mathcal{N}} w + \nabla_{\mathcal{N}} v)}{(1+|\nabla_{\mathcal{N}} v|^2)(1+|\nabla_{\mathcal{N}} w|^2)}\right) \right\|_{\mathcal{N}}\nonumber\\
\leq ~& C(1+h)\|\Delta\tilde{g}\| + C(1+h)\|\tilde{g}\| = C(1+h)(\|\Delta_{\mathcal{N}} g\|_{\mathcal{N}} + \|g\|_{\mathcal{N}}),
\end{align}
which is the desired conclusion.
\end{proof}

Now we present an estimate of the nonlinear term.
\begin{lem}\label{lm: sETDMs2 nonlinear}
For any $v, ~w, ~\mathcal{M}_0^{\mathcal{N}}\cap H^3_h$ and $g\in\mathcal{M}^{\mathcal{N}}\cap H^2_h$, we have
\begin{align*}
&\Big(\beta(\nabla_{\mathcal{N}} v) - \beta(\nabla_{\mathcal{N}} w) , \nabla_{\mathcal{N}}g\Big)_{\mathcal{N}} \notag\\
\leq ~&\frac{C_{v,w}(2+\varepsilon^2)(1+h)}{2\varepsilon^2}\|v - w\|_{\mathcal{N}}^2 + \frac{\varepsilon^2}{4}\|\Delta_{\mathcal{N}}g\|_{\mathcal{N}}^2+ \frac{1}{2}\|g\|_{\mathcal{N}}^2,
\end{align*}
where $C_{v,w}$ is a constant depending on $\Omega$, $\|w\|_{H^3_h}$ and $\|v\|_{H^3_h}$.
\end{lem}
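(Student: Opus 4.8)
The plan is to rewrite the nonlinear difference through an algebraic identity that exposes exactly the two vector fields controlled by Lemma~\ref{lm: sETDMs2 nonlinear - middle step}, then to transfer the discrete gradient off $g$ and onto the difference $v-w$ by summation by parts, and finally to close the estimate with the Cauchy--Schwarz and Young inequalities. The key algebraic step is the pointwise identity
\[
\beta(\mathbf{a}) - \beta(\mathbf{b}) = \frac{\mathbf{a}-\mathbf{b}}{1+|\mathbf{a}|^2} - \frac{\mathbf{b}\,\bigl((\mathbf{a}+\mathbf{b})\cdot(\mathbf{a}-\mathbf{b})\bigr)}{(1+|\mathbf{a}|^2)(1+|\mathbf{b}|^2)},
\]
valid for all vectors $\mathbf{a},\mathbf{b}$, which follows by putting $\frac{\mathbf{a}}{1+|\mathbf{a}|^2}-\frac{\mathbf{b}}{1+|\mathbf{b}|^2}$ over a common denominator and using $|\mathbf{a}|^2-|\mathbf{b}|^2=(\mathbf{a}+\mathbf{b})\cdot(\mathbf{a}-\mathbf{b})$. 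Taking $\mathbf{a}=\nabla_{\mathcal{N}}v$ and $\mathbf{b}=\nabla_{\mathcal{N}}w$, so that $\mathbf{a}-\mathbf{b}=\nabla_{\mathcal{N}}(v-w)$, this splits $\beta(\nabla_{\mathcal{N}}v)-\beta(\nabla_{\mathcal{N}}w)$ into a ``linear'' part $\frac{\nabla_{\mathcal{N}}(v-w)}{1+|\nabla_{\mathcal{N}}v|^2}$ and a ``quadratic'' part whose vector structure coincides with the second quantity estimated in Lemma~\ref{lm: sETDMs2 nonlinear - middle step}.

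Next I would pair this decomposition against $\nabla_{\mathcal{N}}g$ in the discrete $\ell^2$ inner product. Since the scalar prefactors $1/(1+|\cdot|^2)$ and the scalar dot products commute across the inner product, the summation-by-parts formula of Lemma~\ref{IBP} moves $\nabla_{\mathcal{N}}$ off $v-w$ and produces
\begin{align*}
\bigl(\beta(\nabla_{\mathcal{N}}v)-\beta(\nabla_{\mathcal{N}}w),\nabla_{\mathcal{N}}g\bigr)_{\mathcal{N}}
&= -\left(v-w, \nabla_{\mathcal{N}}\cdot\left(\frac{\nabla_{\mathcal{N}}g}{1+|\nabla_{\mathcal{N}}v|^2}\right)\right)_{\mathcal{N}} \\
&\quad + \left(v-w, \nabla_{\mathcal{N}}\cdot\left(\frac{(\nabla_{\mathcal{N}}w\cdot\nabla_{\mathcal{N}}g)(\nabla_{\mathcal{N}}w+\nabla_{\mathcal{N}}v)}{(1+|\nabla_{\mathcal{N}}v|^2)(1+|\nabla_{\mathcal{N}}w|^2)}\right)\right)_{\mathcal{N}}.
\end{align*}
The two divergences on the right are exactly the quantities bounded in Lemma~\ref{lm: sETDMs2 nonlinear - middle step} for the same triple $v,w,g$, which is where the $H^3_h$ regularity of $v$ and $w$ enters.

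Applying the Cauchy--Schwarz inequality to each term and then Lemma~\ref{lm: sETDMs2 nonlinear - middle step} bounds the left-hand side by $2C_{v,w}(1+h)\bigl(\|\Delta_{\mathcal{N}}g\|_{\mathcal{N}}+\|g\|_{\mathcal{N}}\bigr)\|v-w\|_{\mathcal{N}}$, with $C_{v,w}$ depending only on $\Omega$, $\|v\|_{H^3_h}$ and $\|w\|_{H^3_h}$. Young's inequality applied to the two products, pairing $\|\Delta_{\mathcal{N}}g\|_{\mathcal{N}}$ against the weight $\varepsilon^2/4$ and $\|g\|_{\mathcal{N}}$ against the weight $1/2$, then yields $\frac{\varepsilon^2}{4}\|\Delta_{\mathcal{N}}g\|_{\mathcal{N}}^2+\frac{1}{2}\|g\|_{\mathcal{N}}^2$ together with a term of the form $\frac{2(2+\varepsilon^2)}{\varepsilon^2}C_{v,w}^2(1+h)^2\|v-w\|_{\mathcal{N}}^2$. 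Since $h=L/(2N)\le L/2$, the factor $(1+h)^2$ is dominated by a multiple of $(1+h)$, and after absorbing this factor and the square of $C_{v,w}$ into a relabeled constant (still depending only on $\Omega$, $\|v\|_{H^3_h}$, $\|w\|_{H^3_h}$) one arrives at the asserted inequality.

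The only real obstacle is the bookkeeping in the summation-by-parts step: one must check that transferring the derivative reproduces verbatim the two vector fields of Lemma~\ref{lm: sETDMs2 nonlinear - middle step} — in particular that the scalar dot product $\nabla_{\mathcal{N}}w\cdot\nabla_{\mathcal{N}}g$ is correctly reassociated — and that all rational prefactors there are uniformly bounded, which rests on $1+|\nabla_{\mathcal{N}}v|^2\ge 1$ and on the $H^3_h$ bounds already built into $C_{v,w}$. Once this identification is in place, the remaining manipulations are routine.
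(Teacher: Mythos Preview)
Your proposal is correct and follows essentially the same route as the paper: the algebraic identity you state is exactly the decomposition the paper obtains by adding and subtracting $\frac{\nabla_{\mathcal N}w}{1+|\nabla_{\mathcal N}v|^2}$, the summation-by-parts step reproduces the paper's two divergence terms verbatim, and the closing Cauchy--Schwarz/Young argument matches the paper's, with your extra remark about absorbing $(1+h)^2$ into $C_{v,w}(1+h)$ making explicit a step the paper leaves implicit.
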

\begin{proof}
We begin with the following identity: For any $v, ~w\in\mathcal{M}_0^{\mathcal{N}}$,
\begin{align}
&\Big(\beta(\nabla_{\mathcal{N}} v) - \beta(\nabla_{\mathcal{N}} w) , \nabla_{\mathcal{N}}g\Big)_{\mathcal{N}} \nonumber \\
=& \left( \beta(\nabla_{\mathcal{N}} v) - \frac{\nabla_{\mathcal{N}} w}{1+|\nabla_{\mathcal{N}} v|^2}, \nabla_{\mathcal{N}}g \right)_{\mathcal{N}} + \left( \frac{\nabla_{\mathcal{N}} w}{1+|\nabla_{\mathcal{N}} v|^2} - \beta(\nabla_{\mathcal{N}} w), \nabla_{\mathcal{N}}g \right)_{\mathcal{N}}\nonumber \\
=& \left(\nabla_{\mathcal{N}} (v - w), \frac{\nabla_{\mathcal{N}}g}{1+|\nabla_{\mathcal{N}} v|^2} \right)_{\mathcal{N}} + \left( \nabla_{\mathcal{N}} w \frac{|\nabla_{\mathcal{N}} w|^2 - |\nabla_{\mathcal{N}} v|^2}{(1+|\nabla_{\mathcal{N}} v|^2)(1+|\nabla_{\mathcal{N}} w|^2)}, \nabla_{\mathcal{N}}g \right)_{\mathcal{N}}\nonumber \\
=& \left(\nabla_{\mathcal{N}} (v - w), \frac{\nabla_{\mathcal{N}}g}{1+|\nabla_{\mathcal{N}} v|^2} \right)_{\mathcal{N}} + \left( \nabla_{\mathcal{N}} (w - v), \frac{\nabla_{\mathcal{N}} w \cdot \nabla_{\mathcal{N}}g (\nabla_{\mathcal{N}} w + \nabla_{\mathcal{N}} v)}{(1+|\nabla_{\mathcal{N}} v|^2)(1+|\nabla_{\mathcal{N}} w|^2)} \right)_{\mathcal{N}}.\nonumber
\end{align}
An application of integration by parts gives
\begin{align}
&\left(\frac{\nabla_{\mathcal{N}} v}{1+|\nabla_{\mathcal{N}} v|^2} - \frac{\nabla_{\mathcal{N}} w}{1+|\nabla_{\mathcal{N}} w|^2}, \nabla_{\mathcal{N}}g\right)_{\mathcal{N}} \nonumber \\
=~& -\left(v - w, \nabla_{\mathcal{N}}\cdot\left(\frac{\nabla_{\mathcal{N}}g}{1+|\nabla_{\mathcal{N}} v|^2}\right) \right)_{\mathcal{N}} \nonumber \\
& - \left( w - v, \nabla_{\mathcal{N}} \cdot \left(\frac{\nabla_{\mathcal{N}} w \cdot \nabla_{\mathcal{N}}g (\nabla_{\mathcal{N}} w + \nabla_{\mathcal{N}} v)}{(1+|\nabla_{\mathcal{N}} v|^2)(1+|\nabla_{\mathcal{N}} w|^2)}\right) \right)_{\mathcal{N}} \nonumber \\
\leq ~& \|v - w\|_{\mathcal{N}}\left\|\nabla_{\mathcal{N}}\cdot\left(\frac{\nabla_{\mathcal{N}}g}{1+|\nabla_{\mathcal{N}} v|^2}\right)\right\|_{\mathcal{N}} \nonumber \\
& + \|v - w\|_{\mathcal{N}}\left\| \nabla_{\mathcal{N}} \cdot \left(\frac{\nabla_{\mathcal{N}} w \cdot \nabla_{\mathcal{N}}g (\nabla_{\mathcal{N}} w + \nabla_{\mathcal{N}} v)}{(1+|\nabla_{\mathcal{N}} v|^2)(1+|\nabla_{\mathcal{N}} w|^2)}\right) \right\|_{\mathcal{N}}.\label{sETDMs2: NL part}
\end{align}
Applying Lemma~\ref{lm: sETDMs2 nonlinear - middle step}, we have the following estimate:
\begin{align}
&\left(\frac{\nabla_{\mathcal{N}} v}{1+|\nabla_{\mathcal{N}} v|^2} - \frac{\nabla_{\mathcal{N}} w}{1+|\nabla_{\mathcal{N}} w|^2}, \nabla_{\mathcal{N}}g\right)_{\mathcal{N}}  \nonumber \\
\leq ~& C(1+h)\|v - w\|_{\mathcal{N}}\left(\|\Delta_{\mathcal{N}} g\|_{\mathcal{N}} + \|g\|_{\mathcal{N}}\right) \nonumber \\
\leq ~& \frac{\varepsilon^2}{4}\|\Delta_{\mathcal{N}}g\|_{\mathcal{N}}^2 + \frac{1}{2}\|g\|_{\mathcal{N}}^2+ C(1+h)\left(\frac{1}{\varepsilon^2} + \frac{1}{2}\right)\|v - w\|_{\mathcal{N}}^2,\label{nonlinear term 1}
\end{align}
where $C$ is a constant dependent on $\|w\|_{H^3_h}$, $\|v\|_{H^3_h}$, $\Omega_{\mathcal{N}}$.
\end{proof}

Below is the main result of this section.
\begin{thm}\label{thm:convergence}
Assume that the exact solution satisfies $u_e\in H^{1}(0,T;H^{m+4}_{per}(\Omega))\cap H^{2}(0,T;H^2(\Omega))$ with $m\geq 0$. Let $u(t) = u_e(t)|_{\Omega_{\mathcal{N}}}$ and $u_s^0 = u(0)\in H_h^5$. Denote by $\{u_s^n\}^{N_t}_{n=1}$ the solution of the sETDMs2 scheme \eqref{sETDMs2}. If the time step satisfies
\begin{equation}
\tau \leq \frac{1}{8},
\end{equation}
then we have
\begin{equation}
\|u(t_n)-u_s^n\|_{\mathcal{N}}\leq C(\tau^2+N^{-m}), \quad 1\leq n\leq N_t,
\end{equation}
in which $C>0$ is independent of $\tau$ and the spatial discretization parameter $N$.
\end{thm}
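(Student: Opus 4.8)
The plan is to reduce the statement to a one-step recursion for $\|u(t_n)-u_s^n\|_{\mathcal{N}}$ driven by a \emph{continuous-in-time} error equation on each subinterval, and then close it with a discrete Gronwall argument. Write $u(t):=u_e(t)|_{\Omega_{\mathcal{N}}}$. First I would restrict \eqref{MBE} to the grid and re-express the spatial operators through the collocation operators, producing
\[
\frac{{\rm d}u}{{\rm d}t}=-L_{\mathcal{N}}u-f_{\mathcal{N}}(u)+\rho(t),
\]
where $\rho(t)$ is the spatial truncation (aliasing) error; using the interpolation estimate \eqref{I_N convergence}, Lemma~\ref{I_N Hm bound} and the smoothness of $\beta$, I would show $\|\rho(t)\|_{\mathcal{N}}\le CN^{-m}$ uniformly in $t$ — the four extra derivatives in the hypothesis $u_e\in H^1(0,T;H^{m+4}_{per}(\Omega))$ being what is needed to absorb the biharmonic part after the nonlinearity $\nabla\cdot\beta(\nabla\cdot)$ is expanded. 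Adding $A\tau^2\frac{{\rm d}}{{\rm d}t}\Delta_{\mathcal{N}}^2 u$ to both sides and subtracting the scheme \eqref{sETDMs2: discrete}, the error $e^{n+1}(t):=u(t)-u_s^{n+1}(t)$ on $(t_n,t_{n+1}]$ (so $e^{n+1}(t_n)=e^n:=u(t_n)-u_s^n$) satisfies
\begin{align*}
\frac{{\rm d}e^{n+1}}{{\rm d}t}+A\tau^2\frac{{\rm d}\Delta_{\mathcal{N}}^2 e^{n+1}}{{\rm d}t}
&=-L_{\mathcal{N}}e^{n+1}-\bigl[f_{\mathcal{N}}(u(t))-f_{\mathcal{N},L}(t-t_n,u_s^n,u_s^{n-1})\bigr]\\
&\quad+\rho(t)+A\tau^2\frac{{\rm d}\Delta_{\mathcal{N}}^2 u}{{\rm d}t}.
\end{align*}
I would then split the nonlinear defect as $R_1(t)+R_2(t)$, with $R_1(t)=f_{\mathcal{N}}(u(t))-f_{\mathcal{N},L}(t-t_n,u(t_n),u(t_{n-1}))$ the linear-Lagrange extrapolation error (of size $O(\tau^2)$ in $\ell^2$ by a Taylor/Peano-kernel estimate, bounding $\partial_t^2 f_{\mathcal{N}}(u)$ via Lemma~\ref{lm: sETDMs2 nonlinear - middle step}-type inequalities and using $u_e\in H^2(0,T;H^2(\Omega))$ plus the spatial regularity), and $R_2(t)=(1+\tfrac{t-t_n}{\tau})\bigl(f_{\mathcal{N}}(u(t_n))-f_{\mathcal{N}}(u_s^n)\bigr)-\tfrac{t-t_n}{\tau}\bigl(f_{\mathcal{N}}(u(t_{n-1}))-f_{\mathcal{N}}(u_s^{n-1})\bigr)$ the propagated error.

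Next I would take the $(\cdot,e^{n+1}(t))_{\mathcal{N}}$ inner product with the error equation and use the summation-by-parts formulas of Lemma~\ref{IBP}, obtaining
\begin{align*}
&\tfrac12\tfrac{{\rm d}}{{\rm d}t}\|e^{n+1}\|_{\mathcal{N}}^2+\tfrac{A\tau^2}{2}\tfrac{{\rm d}}{{\rm d}t}\|\Delta_{\mathcal{N}}e^{n+1}\|_{\mathcal{N}}^2+\varepsilon^2\|\Delta_{\mathcal{N}}e^{n+1}\|_{\mathcal{N}}^2\\
&\qquad=-(R_1,e^{n+1})_{\mathcal{N}}-(R_2,e^{n+1})_{\mathcal{N}}+(\rho,e^{n+1})_{\mathcal{N}}+A\tau^2\Bigl(\tfrac{{\rm d}\Delta_{\mathcal{N}}u}{{\rm d}t},\Delta_{\mathcal{N}}e^{n+1}\Bigr)_{\mathcal{N}}.
\end{align*}
The $R_1$, $\rho$ and stabilizer-consistency terms are handled by Young's inequality (recall $\|\rho\|_{\mathcal{N}}\le CN^{-m}$ and $\|\partial_t\Delta_{\mathcal{N}}u\|_{\mathcal{N}}$ is controlled by $u_e\in H^1(0,T;H^{m+4})$). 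The essential term is $-(R_2,e^{n+1})_{\mathcal{N}}$: after a summation by parts it becomes a combination of $(\beta(\nabla_{\mathcal{N}}u(t_j))-\beta(\nabla_{\mathcal{N}}u_s^j),\nabla_{\mathcal{N}}e^{n+1})_{\mathcal{N}}$ for $j=n,n-1$, to each of which I would apply Lemma~\ref{lm: sETDMs2 nonlinear} with $g=e^{n+1}$ and $\|u(t_j)-u_s^j\|_{\mathcal{N}}=\|e^j\|_{\mathcal{N}}$; this is exactly where the $H^3_h$ bounds are used — for $u(t_j)$ through the regularity of $u_e$ and Lemma~\ref{I_N Hm bound}, and for the numerical solution through Proposition~\ref{lm: H3 stab}, which is why $u_s^0\in H^5_h$ is assumed. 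Since the extrapolation coefficient $1+\tfrac{t-t_n}{\tau}$ is bounded by $2$, the $\|\Delta_{\mathcal{N}}e^{n+1}\|_{\mathcal{N}}^2$ generated on the right is at most $(2+1)\cdot\tfrac{\varepsilon^2}{4}$ from the two uses of Lemma~\ref{lm: sETDMs2 nonlinear}, plus a small amount from the stabilizer term, hence strictly less than $\varepsilon^2$; a positive multiple of $\|\Delta_{\mathcal{N}}e^{n+1}\|_{\mathcal{N}}^2$ therefore survives on the left, and I arrive at
\begin{align*}
&\tfrac12\tfrac{{\rm d}}{{\rm d}t}\|e^{n+1}(t)\|_{\mathcal{N}}^2+\tfrac{A\tau^2}{2}\tfrac{{\rm d}}{{\rm d}t}\|\Delta_{\mathcal{N}}e^{n+1}(t)\|_{\mathcal{N}}^2\\
&\qquad\le C\bigl(\|e^{n-1}\|_{\mathcal{N}}^2+\|e^n\|_{\mathcal{N}}^2+\|e^{n+1}(t)\|_{\mathcal{N}}^2\bigr)+S_n(t),
\end{align*}
where $\int_{t_n}^{t_{n+1}}S_n(t)\,{\rm d}t\le C\tau^4\bigl(\|\partial_t^2 f_{\mathcal{N}}(u)\|_{L^2(t_{n-1},t_{n+1};\ell^2)}^2+\|\partial_t\Delta_{\mathcal{N}}u\|_{L^2(t_n,t_{n+1};\ell^2)}^2\bigr)+C\tau N^{-2m}$.

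Finally, setting $Y_n:=\|e^n\|_{\mathcal{N}}^2+A\tau^2\|\Delta_{\mathcal{N}}e^n\|_{\mathcal{N}}^2$ and integrating over $(t_n,t_{n+1})$, a continuous Gronwall inequality on the short interval — where $\tau\le\tfrac18$ keeps the amplification factor under $1+C'\tau$ and lets one absorb the $\|e^{n+1}(t)\|_{\mathcal{N}}^2$ term — gives $Y_{n+1}\le(1+C\tau)\max\{Y_n,Y_{n-1}\}+\int_{t_n}^{t_{n+1}}S_n$, and the discrete Gronwall inequality then yields $Y_n\le C(Y_1+\tau^4+N^{-2m})$ for $1\le n\le N_t$. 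For the initial step \eqref{sETDMs2: discrete-init} I would use that $u_s^0=u(0)$ annihilates the nonlinear difference at $t=0$, so $e^1$ obeys $\frac{{\rm d}e^1}{{\rm d}t}+A\tau^2\frac{{\rm d}\Delta_{\mathcal{N}}^2 e^1}{{\rm d}t}=-L_{\mathcal{N}}e^1+g(t)$ with $e^1(0)=0$ and $\|g(t)\|_{\mathcal{N}}\le C(\tau+N^{-m})$ (the $O(\tau)$ from $f_{\mathcal{N}}(u(t))-f_{\mathcal{N}}(u(0))$ and from $A\tau^2\partial_t\Delta_{\mathcal{N}}^2 u$, the $O(N^{-m})$ from aliasing); testing with $\frac{{\rm d}e^1}{{\rm d}t}$ and integrating over $(0,\tau)$ gives $\|e^1\|_{\mathcal{N}}^2+\tau^2\|\Delta_{\mathcal{N}}e^1\|_{\mathcal{N}}^2\le C\tau(\tau+N^{-m})^2$, i.e. $Y_1\le C(\tau^4+N^{-2m})$. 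Combining, $\|u(t_n)-u_s^n\|_{\mathcal{N}}^2\le Y_n\le C(\tau^4+N^{-2m})$, which is the asserted estimate.

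I expect the main difficulty to be twofold. First, the sharp aliasing estimate $\|\rho(t)\|_{\mathcal{N}}\le CN^{-m}$: evaluating $\nabla\cdot\beta(\nabla\cdot)$ and $\Delta^2$ at the interpolant $\mathcal{I}_N u_e$ and peeling off the aliasing forces one to expand all derivatives of the rational nonlinearity $\beta$ and to invoke the interpolation and inverse estimates of Section~2, and it is this step that dictates the $H^{m+4}$ spatial regularity. Second, and more structural, is keeping the biharmonic dissipation $\varepsilon^2\|\Delta_{\mathcal{N}}e^{n+1}\|_{\mathcal{N}}^2$ strictly dominant over the $\|\Delta_{\mathcal{N}}e^{n+1}\|_{\mathcal{N}}^2$ contributions produced by the \emph{extrapolatory} linear-Lagrange term (coefficient up to $2$), which is why one must track the explicit constant $\tfrac{\varepsilon^2}{4}$ in Lemma~\ref{lm: sETDMs2 nonlinear} rather than an unspecified small constant; closely related is the initial-step bound $\tau^2\|\Delta_{\mathcal{N}}e^1\|_{\mathcal{N}}^2=O(\tau^4+N^{-2m})$, which needs a dedicated short-interval energy argument and not merely the $H^2_h$ bound of Proposition~\ref{lm: H2 stab}, without which the global rate would degrade to first order.
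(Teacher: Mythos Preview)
Your proposal is essentially the paper's proof: derive the continuous-in-time error ODE on each subinterval, test with $e^{n+1}(t)$, split the nonlinear defect into the Lagrange extrapolation error plus the propagated error, apply Lemma~\ref{lm: sETDMs2 nonlinear} (backed by the $H^3_h$ bound of Proposition~\ref{lm: H3 stab}) to absorb at most $\tfrac{3\varepsilon^2}{4}\|\Delta_{\mathcal{N}}e^{n+1}\|_{\mathcal{N}}^2$ into the dissipation, and finish with a Gronwall argument; the paper organizes the truncation as $\mathcal{R}_1+\mathcal{R}_2+\mathcal{R}_3+\mathcal{R}_4$ and uses an $e^{-4t}$ integrating factor before summing (rather than a local continuous Gronwall), but this is equivalent to what you wrote.

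One small point: for the initial step the paper tests with $e(t)$, whereas you test with $\tfrac{{\rm d}e^1}{{\rm d}t}$; your choice is actually cleaner here, since it yields $\int_0^\tau\|\tfrac{{\rm d}e^1}{{\rm d}t}\|_{\mathcal{N}}^2\le C\tau(\tau+N^{-m})^2$ and hence, via $\|e^1\|_{\mathcal{N}}^2\le\tau\int_0^\tau\|\tfrac{{\rm d}e^1}{{\rm d}t}\|_{\mathcal{N}}^2$, the bound $Y_1\le C\tau^2(\tau+N^{-m})^2\le C(\tau^4+N^{-2m})$ --- note that your intermediate line $\|e^1\|_{\mathcal{N}}^2+\tau^2\|\Delta_{\mathcal{N}}e^1\|_{\mathcal{N}}^2\le C\tau(\tau+N^{-m})^2$ is missing this extra factor of $\tau$ and as written would only give $Y_1=O(\tau^3)$, so be sure to include that step.
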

\begin{proof}
Define error function $e(t) = u(t) - u_s^{n+1}(t)\in \mathcal{M}^{\mathcal{N}}$. Below we denote $u_e(t_n)$, $u(t_n)$ as $u_e^n$ and $u^n$, respectively. Subtracting \eqref{sETDMs2: discrete} from \eqref{MBE} gives
\begin{align}
&\frac{{\rm d} e(t)}{{\rm d} t} + A\tau^2\frac{{\rm d}\Delta^2_{\mathcal{N}} e(t)}{{\rm d} t} + \varepsilon^2\Delta_{\mathcal{N}}^2 e(t)\nonumber \\
=~& - \left[f_{\mathcal{N},L}(t - t_n, u^n,u^{n-1}) - f_{\mathcal{N},L}(t - t_n, u_s^{n}, u_s^{n-1})\right] + \mathcal{R}(t),\label{sETDMs2: error eq}
\end{align}
for $t\leq t_{n+1}$, where $\mathcal{R}(t)$ is the truncation error:
\begin{align}
\mathcal{R}(t)
&= -\varepsilon^2 (\Delta^2 - \Delta^2_{\mathcal{N}})u(t) + A\tau^2\frac{{\rm d}\Delta^2_{\mathcal{N}} u(t)}{{\rm d} t} \notag\\
&\quad + \Big[\nabla_{\mathcal{N}}\cdot\beta(\nabla_{\mathcal{N}} u(t)) - \nabla\cdot\beta(\nabla u_e(t))\Big|_{\Omega_{\mathcal{N}}}\Big] \nonumber\\
&\quad + \Big(\frac{t-t_n}{\tau}\nabla_{\mathcal{N}}\cdot \left[\beta(\nabla_{\mathcal{N}}u(t)) - \beta(\nabla_{\mathcal{N}}u^{n-1})\right]\nonumber\\
&\qquad \quad -\frac{t-t_{n-1}}{\tau}\nabla_{\mathcal{N}}\cdot \left[\beta(\nabla_{\mathcal{N}} u(t)) - \beta(\nabla_{\mathcal{N}}u^n)\right]\Big)\nonumber \\
&:= \mathcal{R}_1(t) + \mathcal{R}_2(t) + \mathcal{R}_3(t) + \mathcal{R}_4(t).\label{sETDMs2: truncation}
\end{align}
Taking an inner product with $e(t)$ on both sides of \eqref{sETDMs2: truncation} yields
\begin{align}
&\frac{1}{2}\frac{{\rm d} \|e(t)\|_{\mathcal{N}}^2}{{\rm d} t} + \frac{A\tau^2}{2}\frac{{\rm d}\|\Delta_{\mathcal{N}} e(t)\|_{\mathcal{N}}^2}{{\rm d} t} + \varepsilon^2\|\Delta_{\mathcal{N}} e(t)\|_{\mathcal{N}}^2 \nonumber \\
=~& \left(\mathbf{\hat{f}}_{\mathcal{N},L}(t - t_n, u(t_n),u(t_{n-1})) - \mathbf{\hat{f}}_{\mathcal{N},L}(t - t_n, u_s^{n},u_s^{n-1}), \nabla_{\mathcal{N}}e(t) \right)_{\mathcal{N}}\notag\\
& + \left(\mathcal{R}(t), e(t)_{\mathcal{N}}\right)\nonumber \\
=~& \text{(NL)} + \left(\mathcal{R}(t), e(t)\right)_{\mathcal{N}}.\label{sETDMs2: error eq 2}
\end{align}
The truncation error term could be handled by the Cauchy-Schwarz inequality:
\begin{align}
\left(\mathcal{R}(t), e(t)\right)_{\mathcal{N}} &\leq \frac{\|\mathcal{R}(t)\|_{\mathcal{N}}^2}{2}+\frac{\|e(t)\|_{\mathcal{N}}^2}{2}.\label{sETDMs2: truncation error}
\end{align}
The nonlinear term on the RHS has the following form:
\begin{align}
\text{(NL)} &= \frac{t + \tau - t_n}{\tau}\Big(\beta(\nabla_{\mathcal{N}} u^n) - \beta(\nabla_{\mathcal{N}} u_s^n), \nabla_{\mathcal{N}}e(t)\Big)_{\mathcal{N}} \nonumber \\
&\quad \frac{-t+t_n}{\tau}\Big(\beta(\nabla_{\mathcal{N}} u^{n-1}) - \beta(\nabla_{\mathcal{N}} u_s^{n-1}), \nabla_{\mathcal{N}}e(t)\Big)_{\mathcal{N}}.\label{sETDMs2: NL}
\end{align}
An application of Lemma~\ref{lm: H3 stab} and Lemma~\ref{lm: sETDMs2 nonlinear} to \eqref{sETDMs2: NL} results in
\begin{align}
\text{(NL)} &\leq \frac{C(1+h)(2+\varepsilon^2)}{2\varepsilon^2}\left(\|e(t_n)\|_{\mathcal{N}}^2 + \|e(t_{n-1})\|_{\mathcal{N}}^2\right) + \frac{3\varepsilon^2}{4}\|\Delta_{\mathcal{N}}e(t)\|_{\mathcal{N}}^2 + \frac{3}{2}\|e(t)\|_{\mathcal{N}}^2,\label{sETDMs2: NL-final}
\end{align}
where $C$ depends on $\|e(t_n)\|_{H^3_h}$, $\|e(t_{n-1})\|_{H^3_h}$ and $\Omega_{\mathcal{N}}$, and thus is a constant dependent only of $\|u_0\|_{H^5_h}$ and $\Omega_{\mathcal{N}}$.

Substituting \eqref{sETDMs2: truncation error} and \eqref{sETDMs2: NL-final} into \eqref{sETDMs2: error eq 2}, we obtain
\begin{align}
&\frac{1}{2}\frac{{\rm d} \|e(t)\|_{\mathcal{N}}^2}{{\rm d} t} + \frac{A\tau^2}{2}\frac{{\rm d}\|\Delta_{\mathcal{N}} e(t)\|_{\mathcal{N}}^2}{{\rm d} t} + \frac{\varepsilon^2}{4} \|\Delta_{\mathcal{N}} e(t)\|_{\mathcal{N}}^2 \nonumber \\
\leq~& \frac{C(1+h)(2+\varepsilon^2)}{2\varepsilon^2}\left(\|e(t_n)\|_{\mathcal{N}}^2 + \|e(t_{n-1})\|_{\mathcal{N}}^2\right) + 2\|e(t)\|_{\mathcal{N}}^2 + \frac{\|\mathcal{R}(t)\|_{\mathcal{N}}^2}{2}.\label{sETDMs2: error eq 4}
\end{align}
Denote $\omega (t) = \frac{1}{2}\|e(t)\|_{\mathcal{N}}^2 + \frac{A\tau^2}{2}\|\Delta_{\mathcal{N}} e(t)\|_{\mathcal{N}}^2$. Multiplying both sides by $e^{-4t}$ gives
\begin{align}
\frac{{\rm d}}{{\rm d}t}e^{-4t}\omega(t) &\leq \left[  \frac{C(1+h)(2+\varepsilon^2)}{2\varepsilon^2}\left(\|e(t_n)\|_{\mathcal{N}}^2 + \|e(t_{n-1})\|_{\mathcal{N}}^2\right) + \frac{\|\mathcal{R}(t)\|_{\mathcal{N}}^2}{2} \right]e^{-4t}.\label{sETDMs2: error eq 3}
\end{align}
Integrating \eqref{sETDMs2: error eq 3} from $t_n$ to $t_{n+1}$ and multiplying both sides by $e^{4t_{n}}$, we have
\begin{align}
&e^{-4\tau}\omega (t_{n+1}) - \omega (t_n) \nonumber \\
\leq & \frac{1-e^{-4\tau}}{4}\frac{C(1+h)(2+\varepsilon^2)}{\varepsilon^2}\left(\|e(t_n)\|_{\mathcal{N}}^2 + \|e(t_{n-1})\|_{\mathcal{N}}^2\right)+ \|\mathcal{R}(t)\|_{L^2(t_n,t_{n+1}; \ell^2)}^2. \nonumber
\end{align}
Since $e^x\geq 1+x$ for $x\in\mathbb{R}$, from the above inequality we can further obtain that
\begin{align}
&\omega (t_{n+1}) - \omega (t_n) - 4\tau\omega (t_{n+1})  \nonumber \\
\leq ~& \frac{C(1+h)(2+\varepsilon^2)\tau}{\varepsilon^2}\left(\|e(t_n)\|_{\mathcal{N}}^2 + \|e(t_{n-1})\|_{\mathcal{N}}^2\right)+ \sum_{i=1}^4 \|R_i(t)\|_{L^2(t_n,t_{n+1}; \ell^2)}^2. \label{gronwall-mid}
\end{align}
Note that $\mathcal{R}_i(t) = \widetilde{\mathcal{R}}_i(t)\Big|_{\Omega_{\mathcal{N}}}$, for $1\leq i \leq 3$, where
\begin{align}
\|\widetilde{\mathcal{R}}_1(t)\|_{L^2} &= \|-\varepsilon^2 \left(\Delta^2 u_e(t) - \Delta^2 \mathcal{I}_Nu(t)\right)\|_{L^2}\leq C\varepsilon^2h^{m}\|u_e(t)\|_{H^{m+4}},\\
\|\widetilde{\mathcal{R}}_2(t)\|_{L^2} &= \left\|A\tau^2\frac{{\rm d}\Delta^2 \mathcal{I}_N u_e(t)}{{\rm d} t}\right\|_{L^2}= A\tau^2\left\|\Delta^2\mathcal{I}_N\frac{{\rm d} u_e(t)}{{\rm d} t}\right\|_{L^2} \nonumber\\
																&\leq C\tau^2 h^{m}\left\|\frac{{\rm d} u_e(t)}{{\rm d} t}\right\|_{H^{m+4}} + A\tau^2\left\|\frac{{\rm d} u_e(t)}{{\rm d} t}\right\|_{H^4},\\
\|\widetilde{\mathcal{R}}_3(t)\|_{L^2} &= \|\nabla\cdot\left[\beta(\nabla_{\mathcal{N}} \mathcal{I}_Nu_e(t)) - \beta(\nabla u_e(t))\right]\|_{L^2}\leq C\|\mathcal{I}_Nu_e(t) - u_e(t)\|_{H^2}\nonumber\\																
&\leq C\|u_e(t)\|_{H^{m+2}}h^{m},
\end{align}
where we have applied the approximation property of $\mathcal{I}_N$ in \eqref{I_N convergence}. As for $\mathcal{R}_4(t)$, let $\alpha(t) := \nabla_{\mathcal{N}}\cdot \beta(\nabla_{\mathcal{N}} u(t))$, so that
\begin{align}
\mathcal{R}_4(t)
&= -\alpha(t) + \frac{t-t_{n-1}}{\tau}\alpha(t_n) - \frac{t-t_n}{\tau}\alpha(t_{n-1})\notag\\
&= \int_t^{t_n}\frac{(t-t_{n-1})(t_n-s)}{\tau}\alpha''(s)~\mbox{d}s - \int_t^{t_{n-1}}\frac{(t-t_n)(t_{n-1}-s)}{\tau}\alpha''(s)~\mbox{d}s.\label{R4 est}
\end{align}
Applying the Cauchy-Schwarz inequality and the estimate \eqref{I_N convergence} gives that
\begin{align}
\|\mathcal{R}_4(t)\|_{\mathcal{N}}
&\leq C\tau^{\frac{3}{2}}\|\nabla_{\mathcal{N}}\cdot \beta(\nabla_{\mathcal{N}} u(t))\|_{H^{2}(t_{n-1},t_{n+1};L^2_h)}\leq C\tau^{\frac{3}{2}}\|u_e(t)\|_{H^{2}(t_{n-1},t_{n+1};H^2)}.
\end{align}
Then we arrive at
\begin{align*}
&~\sum_{i=1}^4 \|R_i(t)\|_{L^2(t_n,t_{n+1};L_h^2)}^2 \\
=~& \sum_{i=1}^3 \|\widetilde{\mathcal{R}}_i(t)\|_{L^2(t_n,t_{n+1};L^2)}^2 + \|R_4(t)\|_{L^2(t_n,t_{n+1};L_h^2)}^2\\
\leq~& C(h^{2m}+\tau^4)(\|u_e(t)\|_{H^{1}(t_n,t_{n+1};H^{m+4})} + \|u_e(t)\|_{H^{2}(t_{n-1},t_{n+1};H^2)}^2).
\end{align*}
Substituting the above estimate into \eqref{gronwall-mid} yields
\begin{align}
&\omega (t_{n+1}) - \omega (t_n) - 4\tau\omega (t_{n+1})  \nonumber \\
\leq ~& \frac{C(1+h)(2+\varepsilon^2)\tau}{\varepsilon^2}\left(\|e(t_n)\|_{\mathcal{N}}^2 + \|e(t_{n-1})\|_{\mathcal{N}}^2\right)\notag\\
&+C(h^{2m}+\tau^4)(\|u_e(t)\|_{H^{1}(t_n,t_{n+1};H^{m+4})} + \|u_e(t)\|_{H^{2}(t_{n-1},t_{n+1};H^2)}^2). \label{gronwall-mid2}
\end{align}
A summation of the above inequality from $1$ to $n$ results in
\begin{align}
\omega (t_{n+1}) - \omega (t_1) - 4\tau\sum_{i=2}^{n+1}\omega (t_i)
&\leq \frac{C(1+h)(2+\varepsilon^2)}{2\varepsilon^2}\tau \sum_{i=0}^{n}\|e(t_i)\|_{\mathcal{N}}^2 + C(h^{2m}+\tau^4), \nonumber
\end{align}
where $C$ is dependent of $\|u_e\|_{W^{1,4}(0,T;H^{m+4})}, ~\|u_e(t)\|_{H^{2}(0,T;H^2)}$. Since $4\tau \leq \frac{1}{2}$, we have
\begin{align}
\frac{1}{2}\omega (t_{n+1})
&\leq  \left[ \frac{C(1+h)(2+\varepsilon^2)}{\varepsilon^2} + 4\right]\tau\sum_{i=0}^{n}\omega (t_i) + C(h^{2m} + \tau^4) + \omega (t_1). \label{sETDMs2: gronwall}
\end{align}
As for the initial step, subtracting \eqref{sETDMs2: discrete-init} from \eqref{MBE spatial discrete}, we see that: for $0\leq t\leq \tau$,
\begin{align}
\frac{{\rm d} e(t)}{{\rm d} t} + A\tau^2\frac{{\rm d}\Delta^2_{\mathcal{N}} e(t)}{{\rm d} t} + \varepsilon^2\Delta_{\mathcal{N}}^2 e(t)
&= - \nabla_{\mathcal{N}} \cdot \left[\beta(\nabla_{\mathcal{N}}u_0) - \beta(\nabla_{\mathcal{N}}u_s^0)\right] + \mathcal{R}^1(t) \nonumber \\
&= \mathcal{R}^1(t) ,\label{sETDMs2: error eq - init}
\end{align}
where we have applied the set-up $u_s^0 = u_0$, and $\mathcal{R}^1(t)$ iss the truncation error:
\begin{align}
\mathcal{R}^1(t) &= -\varepsilon^2 (\Delta^2 - \Delta^2_{\mathcal{N}})u(t) + A\tau^2\frac{{\rm d}\Delta^2_{\mathcal{N}} u(t)}{{\rm d} t}\nonumber\\
&\quad - \left[\nabla\cdot\beta(\nabla u(t)) - \nabla_{\mathcal{N}}\cdot\beta(\nabla u(t))\right] - \nabla_{\mathcal{N}} \cdot \left[\beta(\nabla u(t)) - \beta(\nabla_{\mathcal{N}} u_0)\right].\label{sETDMs2: truncation - init}
\end{align}
Similarly, we take an inner product with $e(t)$ on both sides,
\begin{align}
&\frac{1}{2}\frac{{\rm d} \|e(t)\|_{\mathcal{N}}^2}{{\rm d} t} + \frac{A\tau^2}{2}\frac{{\rm d}\|\Delta_{\mathcal{N}} e(t)\|_{\mathcal{N}}^2}{{\rm d} t} + \varepsilon^2\|\Delta_{\mathcal{N}} e(t)\|_{\mathcal{N}}^2
\leq \frac{1}{2}\|e(t)\|_{\mathcal{N}}^2 + C(h^{2m} + \tau^4).\label{sETDMs2: error eq 2 - init}
\end{align}
Repeating the process from \eqref{sETDMs2: error eq 3} to \eqref{sETDMs2: gronwall} for $\omega (t)$ in $[0, \tau]$, since $\omega (0) = 0$, we get
\begin{align*}
e^{-\tau}\omega (t_1)\leq C\tau (h^{2m} + \tau^4),
\end{align*}
which again leads to
\begin{align}
\omega (t_1)\leq Ce^{\tau}(\tau h^{2m} + \tau^5). \label{sETDMs2: init error}
\end{align}
A substitution of \eqref{sETDMs2: init error} into \eqref{sETDMs2: gronwall} implies that
\begin{align}
\frac{1}{2}\omega (t_{n+1})
&\leq  \left[ \frac{C(1+h)(2+\varepsilon^2)}{\varepsilon^2} + 4\right]\tau\sum_{i=0}^{n}\omega (t_i) + C(h^{2m} + \tau^4). \label{sETDMs2: gronwall-2}
\end{align}
Applying the discrete Gronwall's inequality and recall that $h$ is bounded above, we get $\omega (t_{n+1}) \leq C_{T, \varepsilon}(h^{2m} + \tau^4)$, i.e.,
\begin{align}
\|e(t_{n+1})\|_{\mathcal{N}}^2 + A\tau^2\|\Delta_{\mathcal{N}} e(t_{n+1})\|_{\mathcal{N}}^2\leq C_{T, \varepsilon,u_0}(h^{2m} + \tau^4),\label{sETDMs2: error result}
\end{align}
where $C_{T, \varepsilon, u_0}$ is a constant dependent on $\|\nabla_{\mathcal{N}}\Delta_{\mathcal{N}} u_0\|_{\mathcal{N}}$, $T$ and $\varepsilon$.
\end{proof}

\begin{rem} Note that in Theorem~\ref{thm:convergence}, the parameter $\varepsilon$ is not involved in the requirement of time step $\tau$, in comparison with  a previous work \cite{li2018second}. This improvement comes from the treatment of the nonlinear term in Lemma~\ref{lm: sETDMs2 nonlinear - middle step}, where we have bounded the nonlinear term by the sum (instead of the product) of $\|\Delta_{\mathcal{N}} e(t)\|$ and $\|e(t)\|$, and thus avoided an $\varepsilon^{-2}$ coefficient of $\|e(t)\|$ after applying Cauchy-Schwarz inequality to obtain an $\varepsilon^2$ coefficient of $\|\Delta_{\mathcal{N}} e(t)\|$. In fact, the restriction can be further relaxed, since we only need $4\tau<1$ in \eqref{sETDMs2: gronwall} for the coefficient of $\omega (t_{n+1})$ to be positive.
\end{rem}

\begin{rem}
There have been some recent works for the linearized energy stable numerical schemes for various 3-D gradient flows, such as a recent paper~\cite{LiD2017} for the Cahn-Hilliard model. In fact, all the results presented in this article could be extended to the 3-D equation, with careful treatments in the Sobolev analysis, although the NSS equation~\eqref{MBE} is of physical interests only in the 2-D case. We may consider an associated extension for other related gradient models in the future works.
\end{rem}

\section{Numerical results}\label{sec: numerical results}
In this section, we first demonstrate various numerical experiments to verify the temporal convergence and energy stability of the sETDMs2 scheme \eqref{sETDMs2}. In particular, a comparison with the the ETDMs2 scheme (in \cite{Ju17}) are performed, with $\kappa = \frac{1}{8}$ in ETDMs2, and $A = \frac{1}{8}, 10^{-2}, \frac{2+\sqrt{3}}{6}$ in sETDMs2. And also, some long time simulation results computed by the sETDMs2 are presented, and some long time characteristics of the numerical solution are studied, such as the energy decay rate, the average surface roughness and average slope.

\subsection{Temporal Convergence}
Throughout this subsection, we consider solving \eqref{MBE} with $\Omega = [0,2\pi]^2$, $\varepsilon^2 = 0.01$, $T = 1$, and the initial value $u_e(0) = \sin x \cos y $ on the uniform $N\times N$ mesh with $N = 256$. The time step size is set to be $\tau =  0.005 * [2^{-1}, 2^{-2}, 2^{-3}, 2^{-4}, 2^{-5}, 2^{-6}]$. With an additional time-dependent forcing term $g(t)$, the exact solution to equation \eqref{MBE} is given by $ u_e(t) = \sin x \cos y \cos t$:
\begin{align*}
g(t) 	&= -\sin x\cos y\sin t + 4\varepsilon^2\sin x\cos y\cos t + \frac{-2\sin x\cos y\cos t}{1+\frac{(\cos t)^2}{2}(1+\cos 2x\cos 2y)}\\
		&\quad + \frac{(\cos t)^3(\cos x\cos y\sin 2x\cos 2y - \sin x\sin y\cos 2x\sin 2y)}{[1+\frac{(\cos t)^2}{2}(1+\cos 2x\cos 2y)]^2}.
\end{align*}

To solve the molecular epitaxial growth equation with a forcing term, we applied both the sETDMs2 and the ETDMs2 schemes. For each algorithm, we computed the relative error ${\left\|u^{N_t} - u_{\tau}^{N_t}\right\|_{\mathcal{N}}}/{\|u^{N_t}\|_{\mathcal{N}}}$ between the exact solution and numerical solutions. Results are displayed in Figure~\ref{fig: 1}, from which the second order temporal convergence is clear observed. Also, different choices of the stabilization coefficient $A$ in the sETDMs2 scheme don't have an evident impact on the reported error values.

\begin{figure}[ht]
\centering
\includegraphics[width = 0.8\textwidth]{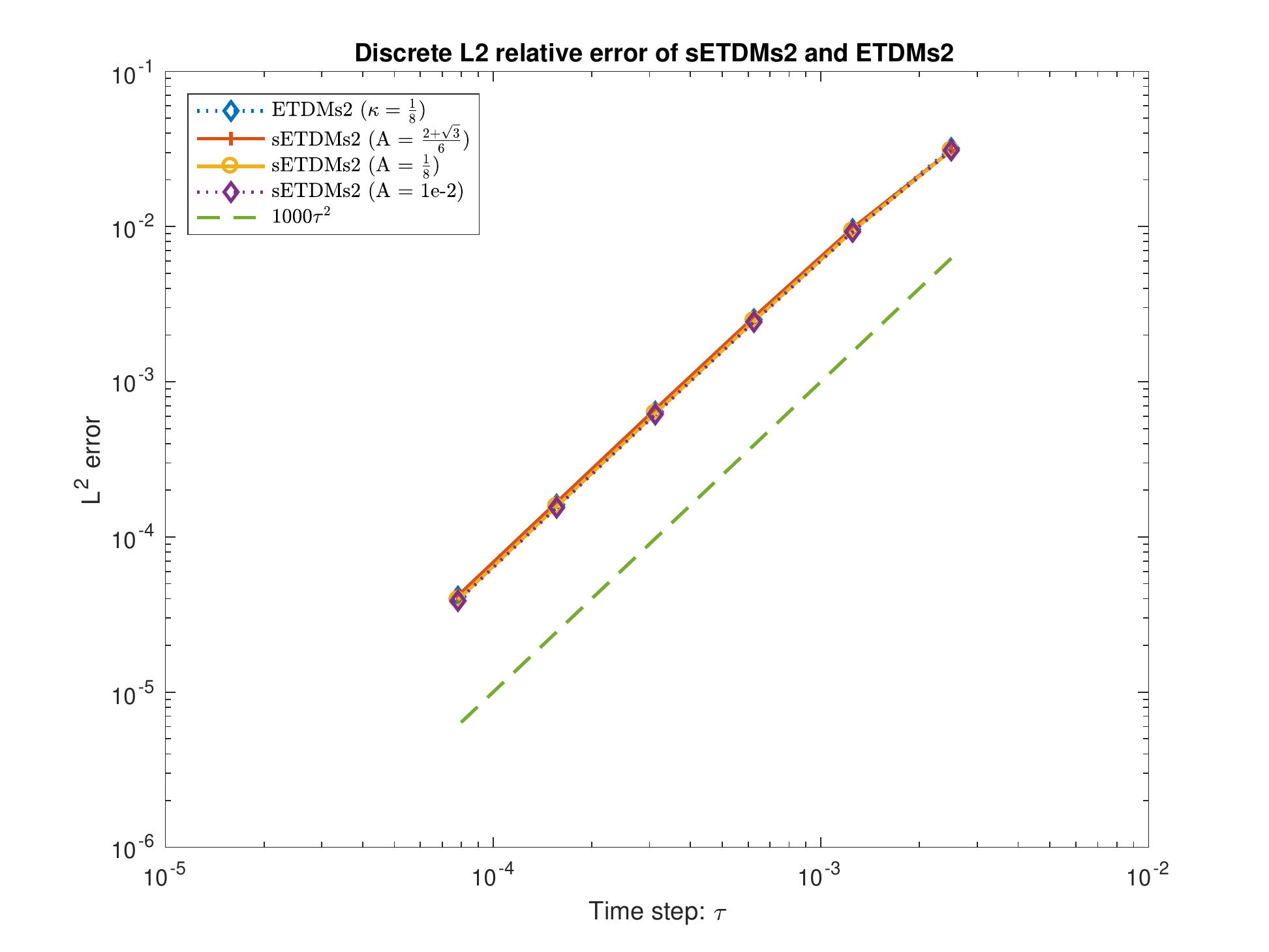}
\caption{Temporal convergence rates of the sETDMs2, ETDMs2 schemes.}
\label{fig: 1}
\end{figure}

\subsection{Energy dissipation}
In this subsection, we set $\Omega = [0,12.8]^2$, $\varepsilon^2 = 0.005$, $T = 25000$ and use a random initial data. The reason for such a choice comes from a subtle fact that, a random initial data leads to a solution with various wave frequencies, so that more detailed and interesting phenomenon associated with different wave frequencies are presented in the coarsening process, while a smooth initial value may yield a solution with trivial structure in a short time.

We use a coarser uniform mesh with $N = 128$, and set time step size $\tau = 0.001$ for $t<200$, $\tau = 0.01$ for $200\leq t < 1000$, $\tau = 0.02$ for $1000\leq t < 2000$, and $\tau = 0.04$ for $t\geq 2000$. Whenever the time step is changed, previous step's solution is used as the initial data and we reuse the initial sETDMs2 step for a start. Figure~\ref{fig: 2} shows the snapshots of the numerical solution $u$ at time $t = $ 1, 1500, 5000, 15000, respectively. It can be observed that the solution has saturated to a one-hill-one-valley structure at the final time. Specially, when $A$ in sETDMs2 is lower than the restriction in Theorem~\ref{thm: energy stab}, the numerical solution still converges to the stable state.

\begin{figure}[ht]
\centering
	\noindent\makebox[\textwidth][c] {
  		\begin{minipage}{0.25\textwidth}
  	 	\includegraphics[width=\textwidth]{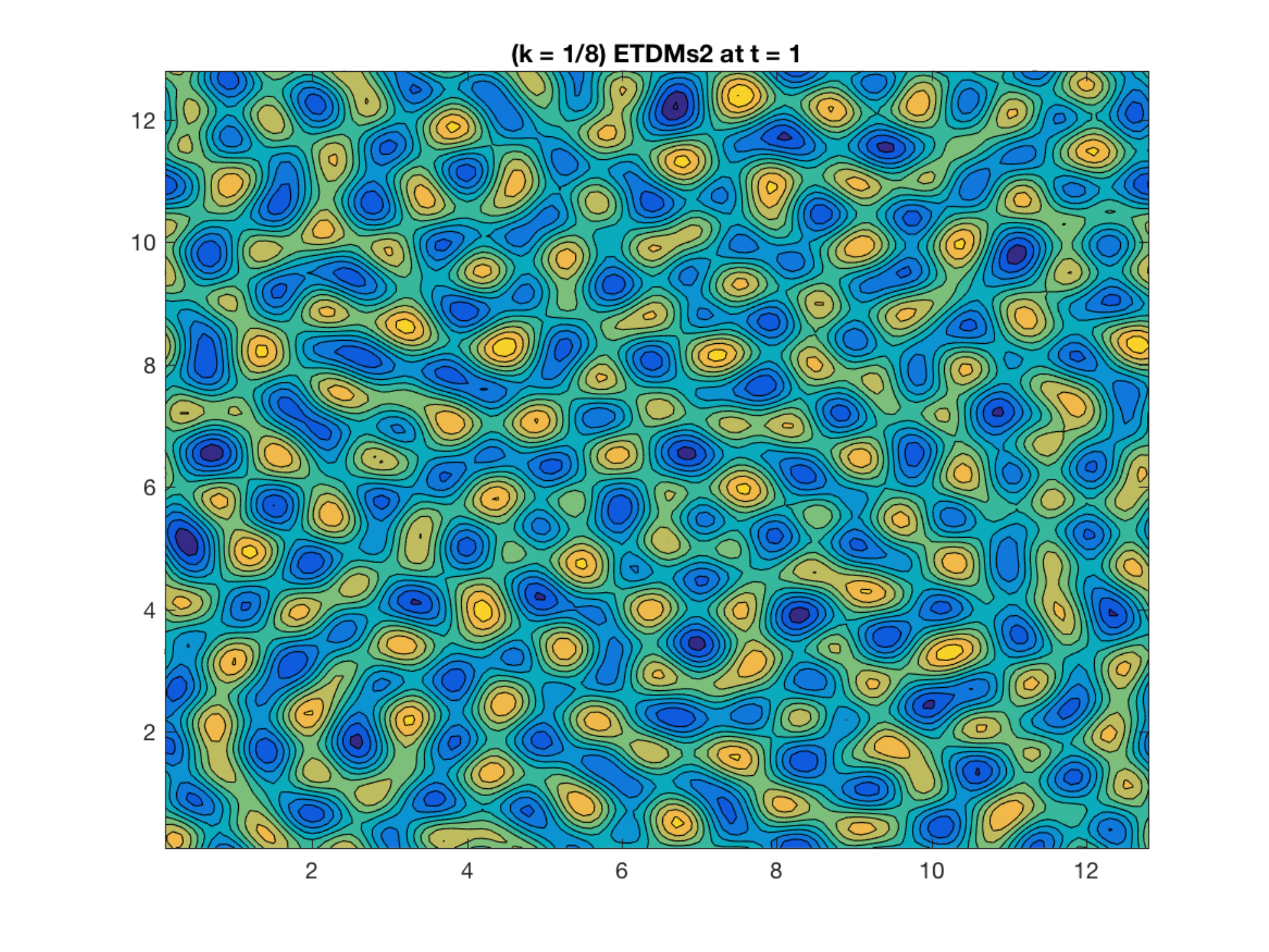}
  		\end{minipage}
 		\begin{minipage}{0.25\textwidth}
  	 	\includegraphics[width=\textwidth]{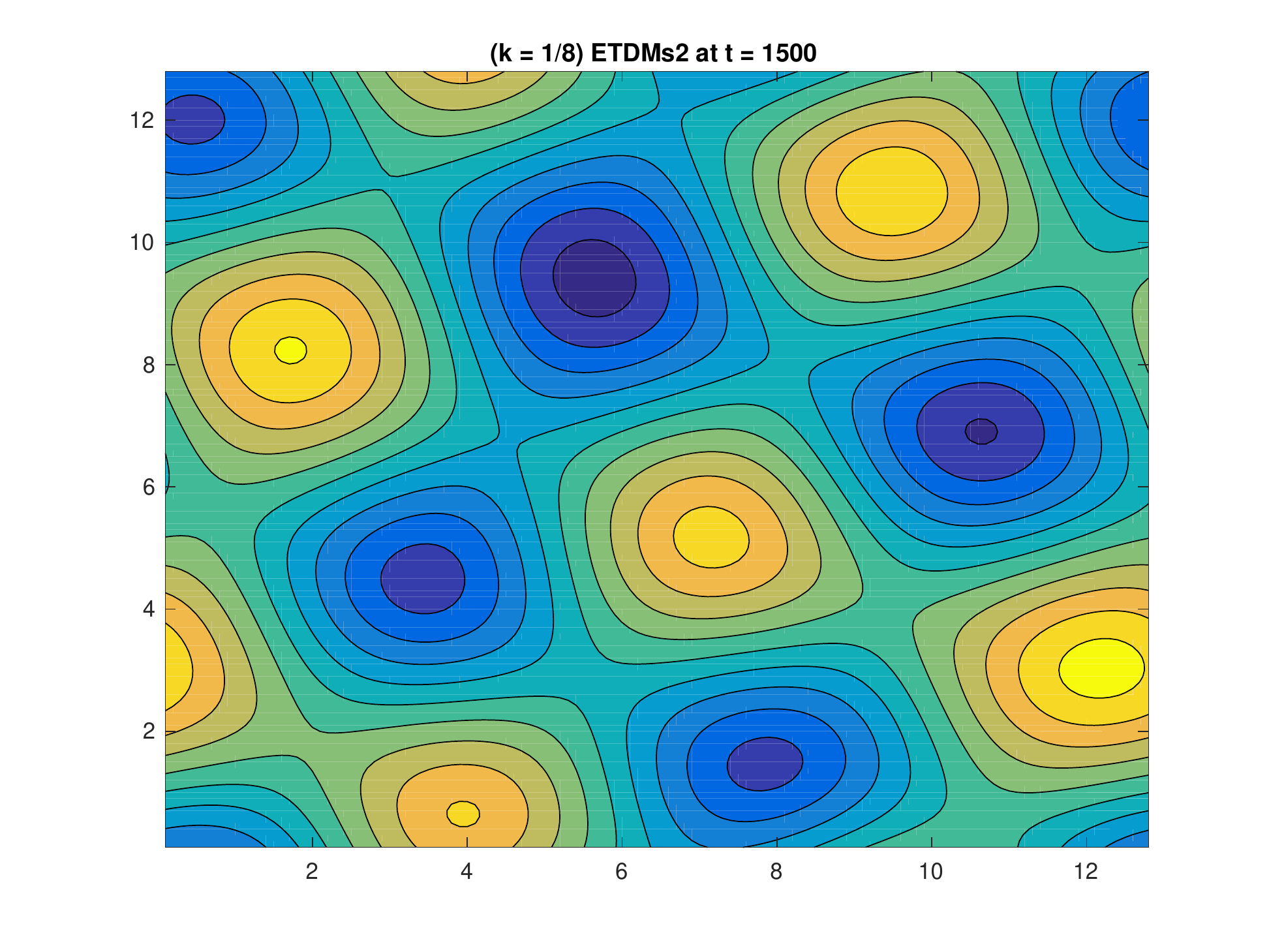}
		 \end{minipage}
  		\begin{minipage}{0.25\textwidth}
  	 	\includegraphics[width=\textwidth]{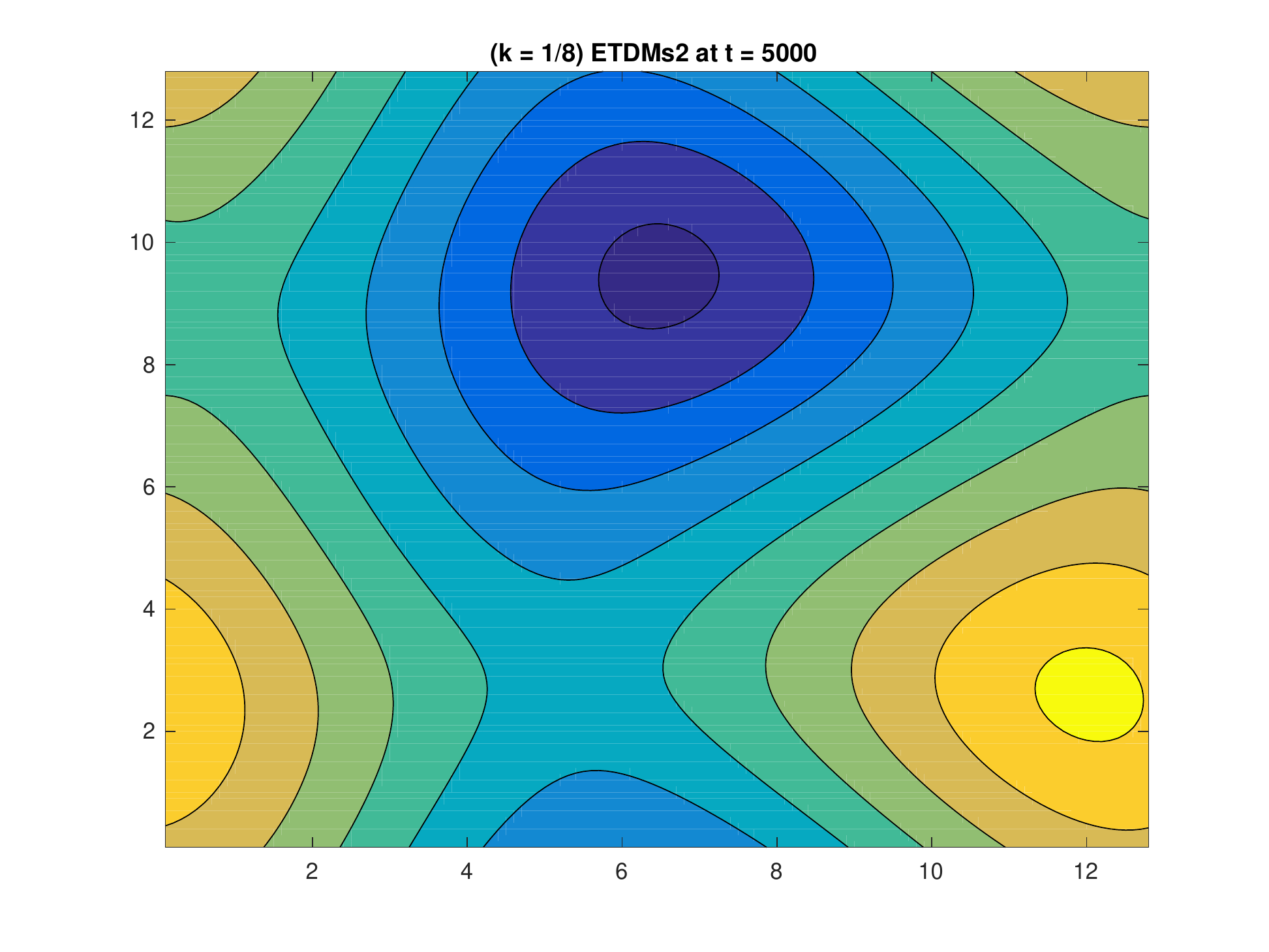}
  		\end{minipage}
 		\begin{minipage}{0.25\textwidth}
  	 	\includegraphics[width=\textwidth]{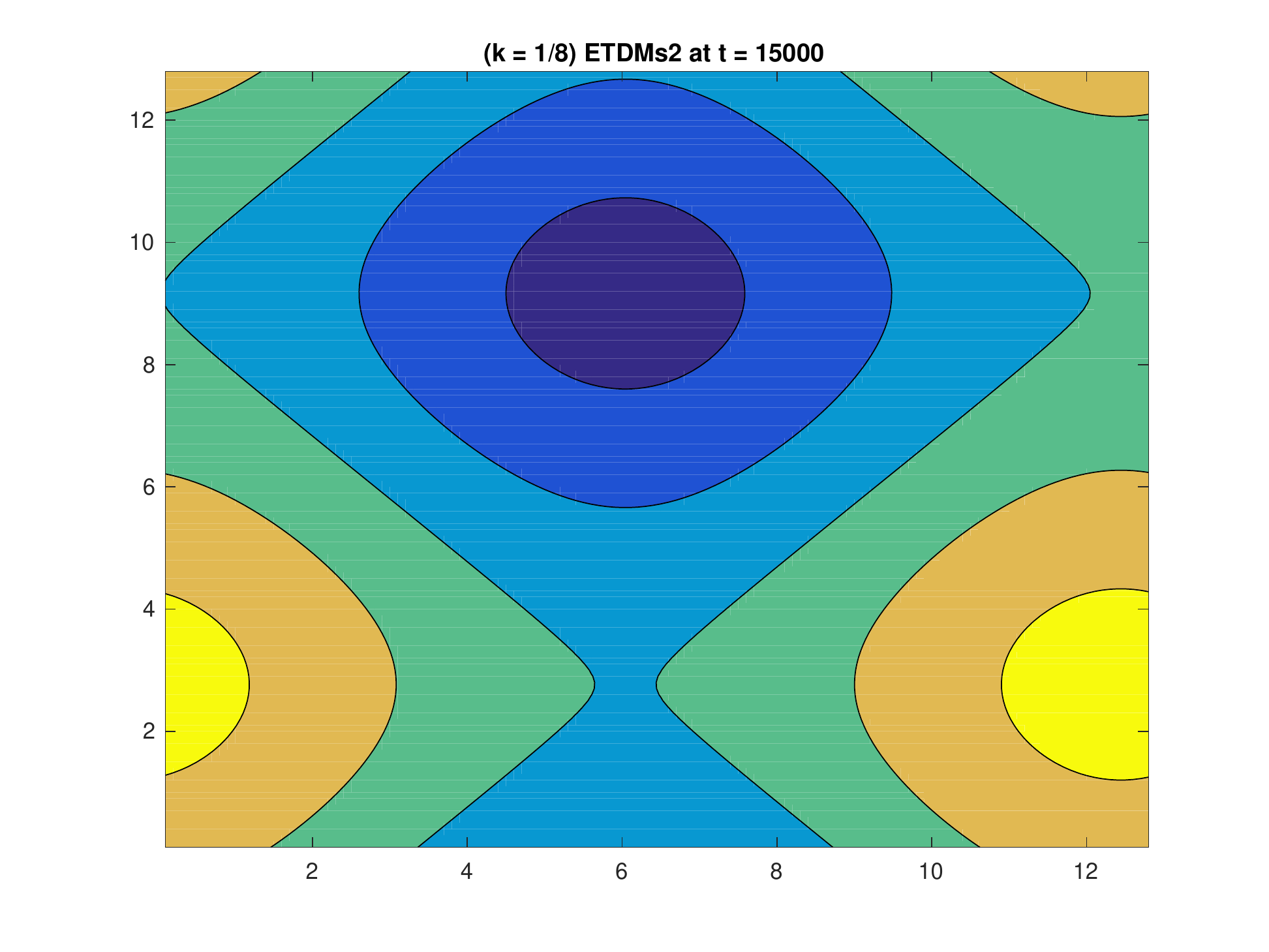}
		 \end{minipage}
	 }
	\noindent\makebox[\textwidth][c] {
  		\begin{minipage}{0.25\textwidth}
  	 	\includegraphics[width=\textwidth]{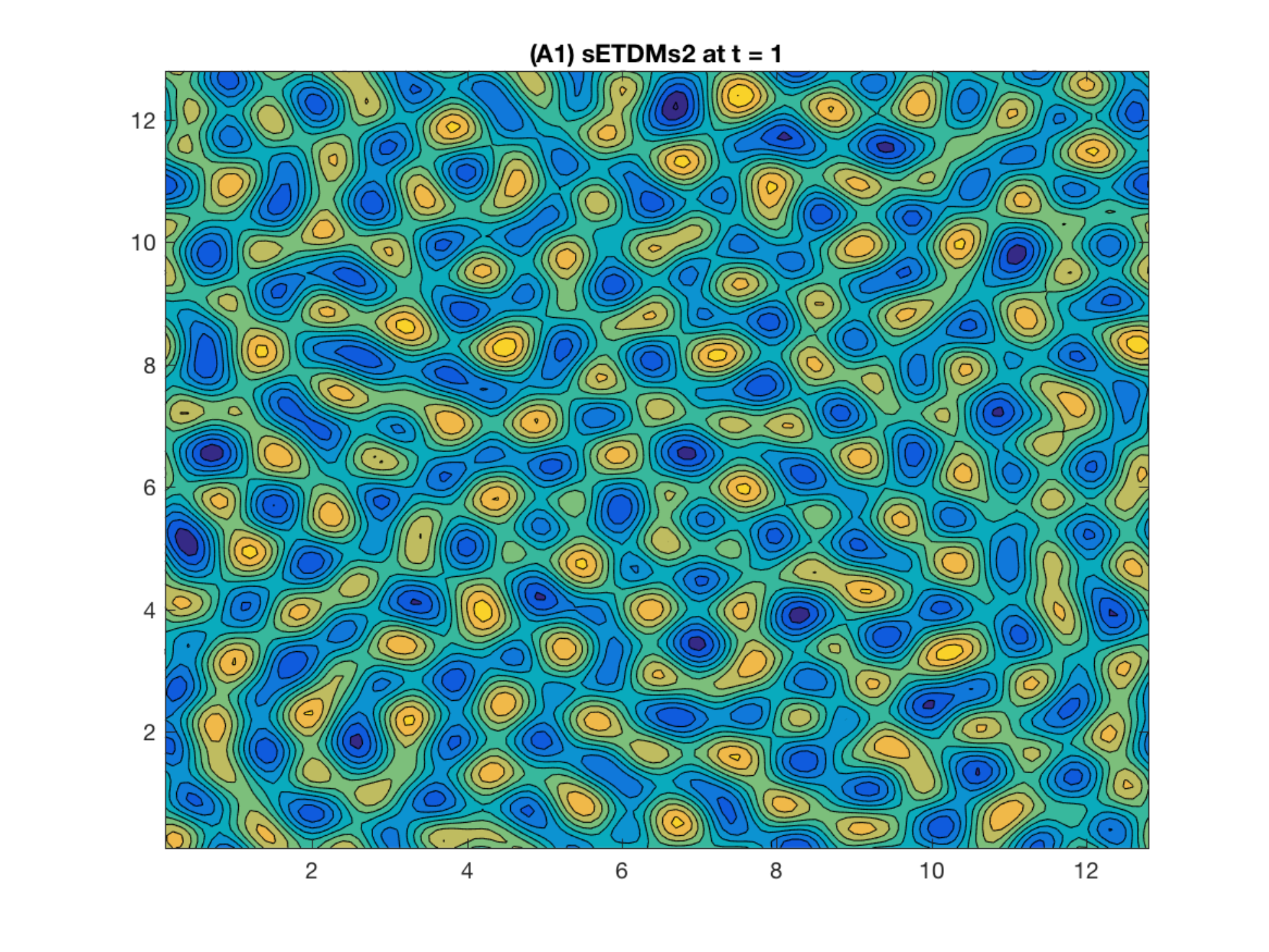}
  		\end{minipage}
 		\begin{minipage}{0.25\textwidth}
  	 	\includegraphics[width=\textwidth]{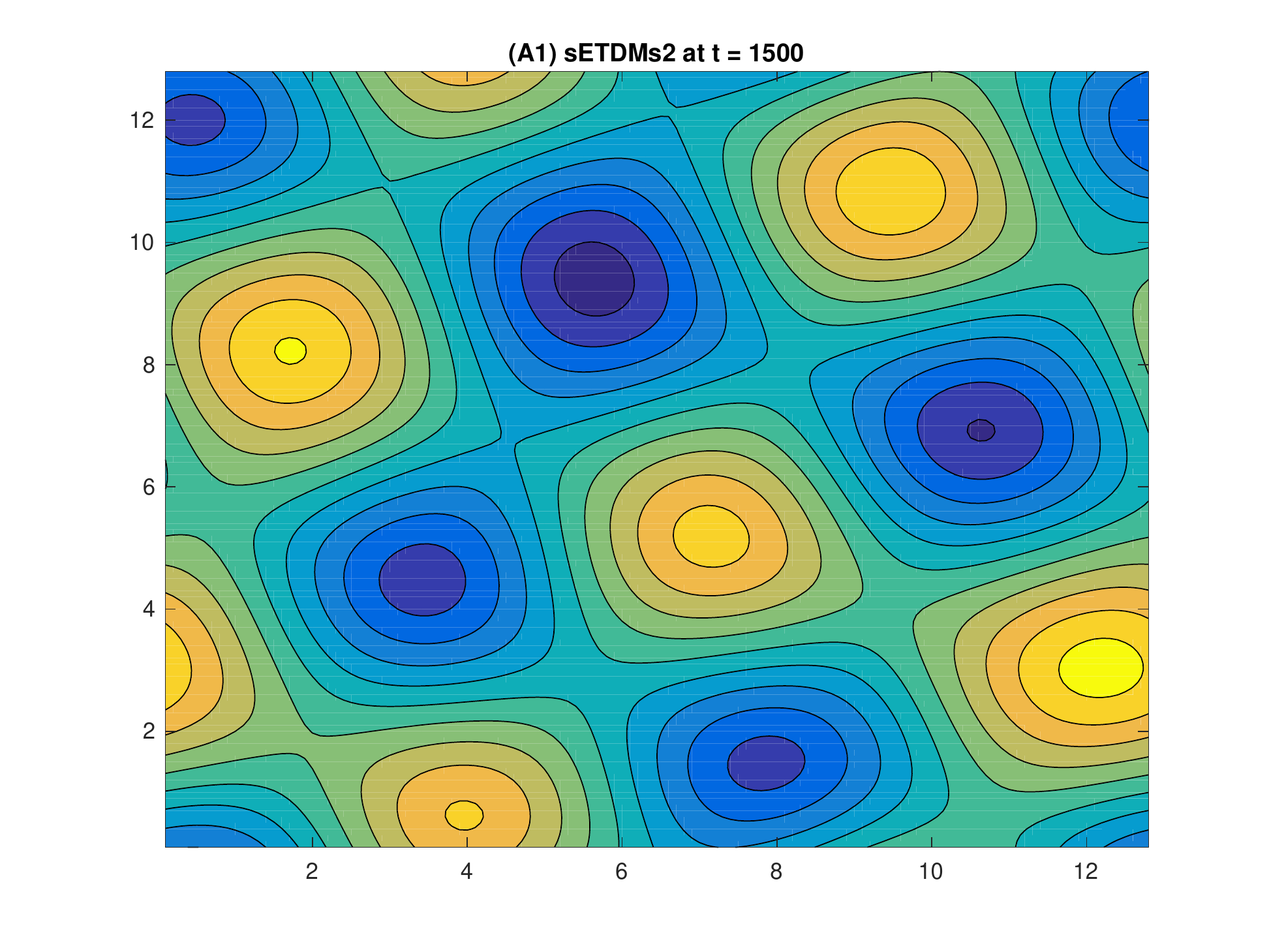}
		 \end{minipage}
  		\begin{minipage}{0.25\textwidth}
  	 	\includegraphics[width=\textwidth]{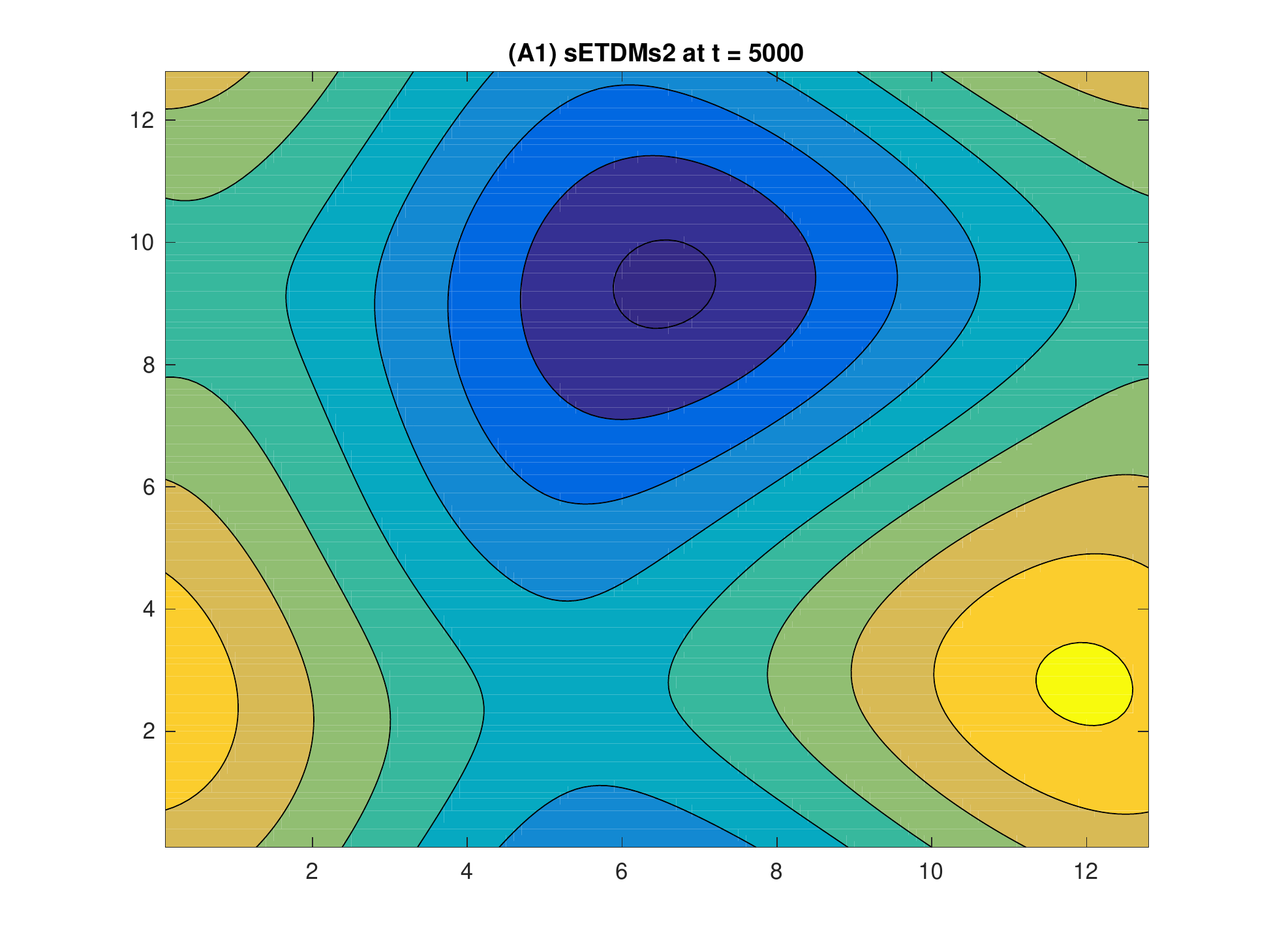}
  		\end{minipage}
 		\begin{minipage}{0.25\textwidth}
  	 	\includegraphics[width=\textwidth]{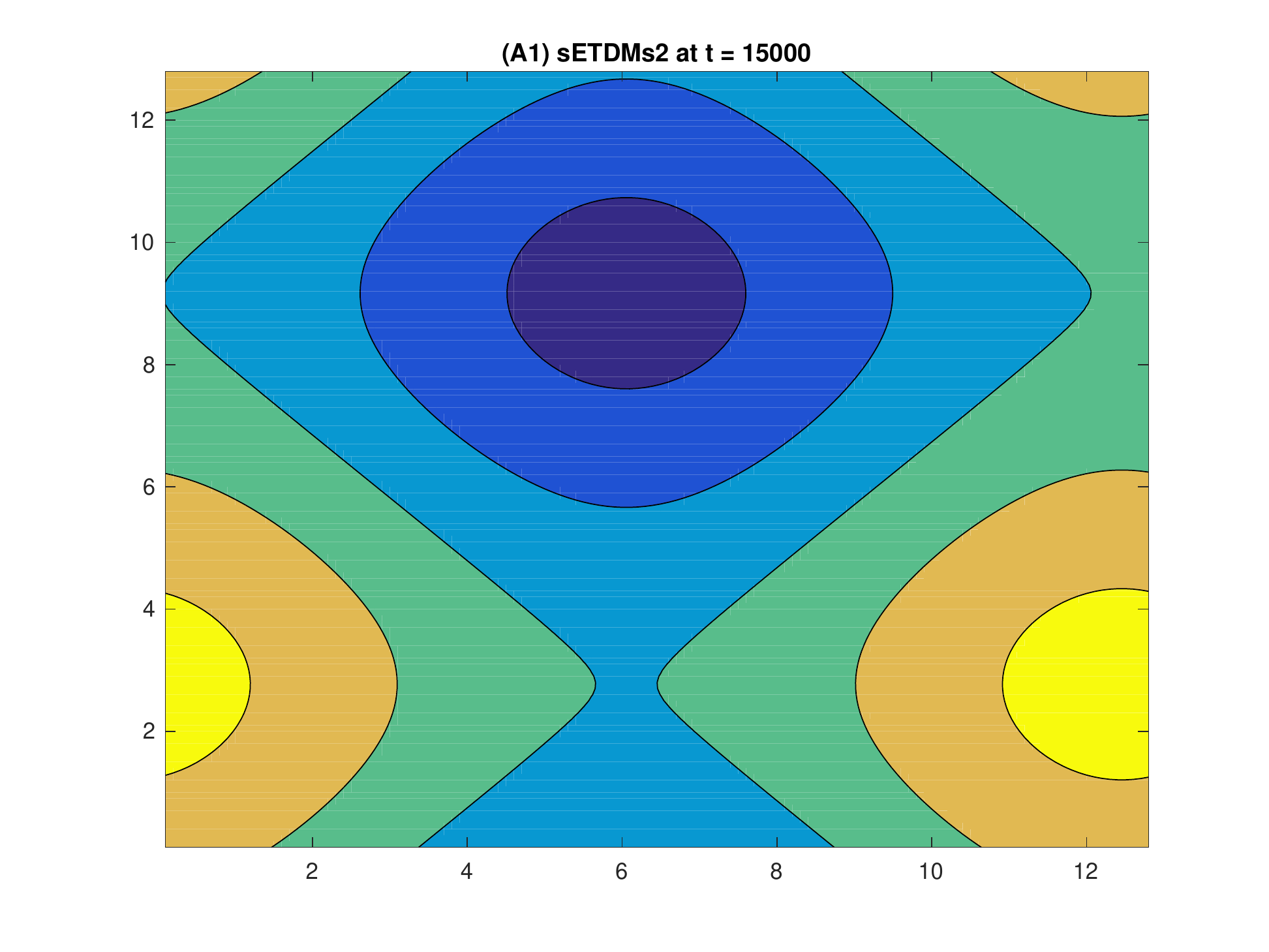}
		 \end{minipage}
	 }
	\noindent\makebox[\textwidth][c] {
  		\begin{minipage}{0.25\textwidth}
  	 	\includegraphics[width=\textwidth]{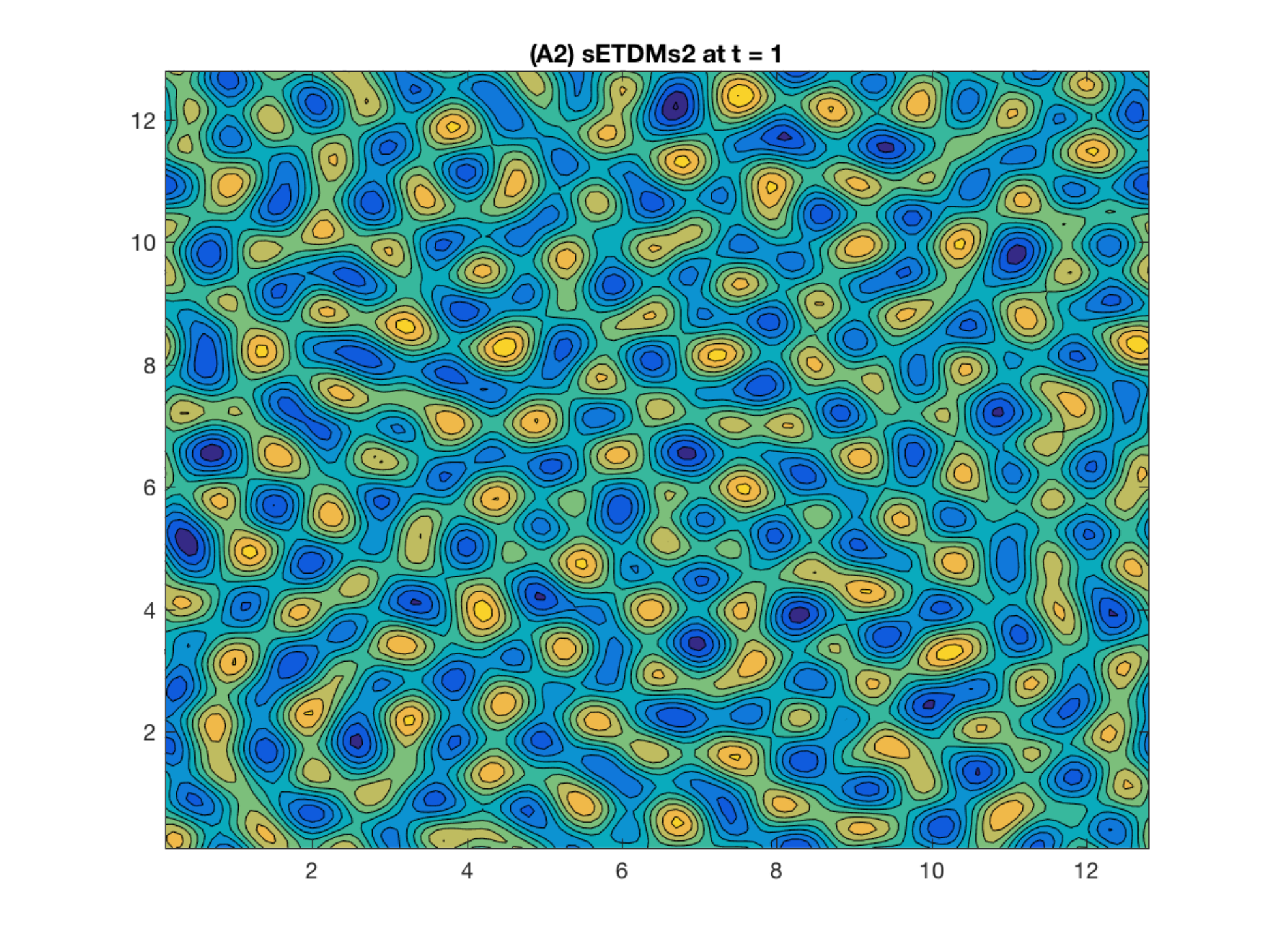}
  		\end{minipage}
 		\begin{minipage}{0.25\textwidth}
  	 	\includegraphics[width=\textwidth]{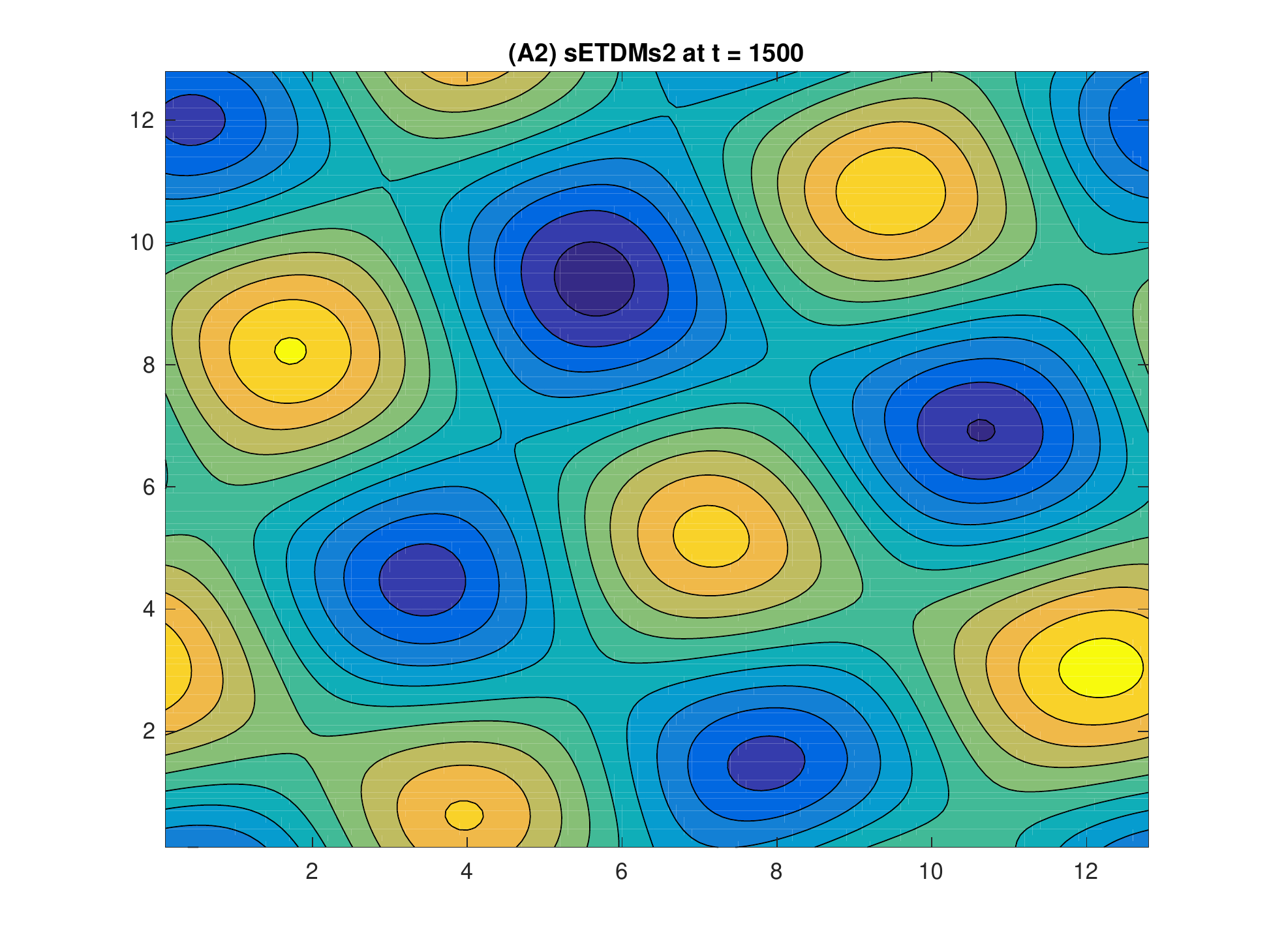}
		 \end{minipage}
  		\begin{minipage}{0.25\textwidth}
  	 	\includegraphics[width=\textwidth]{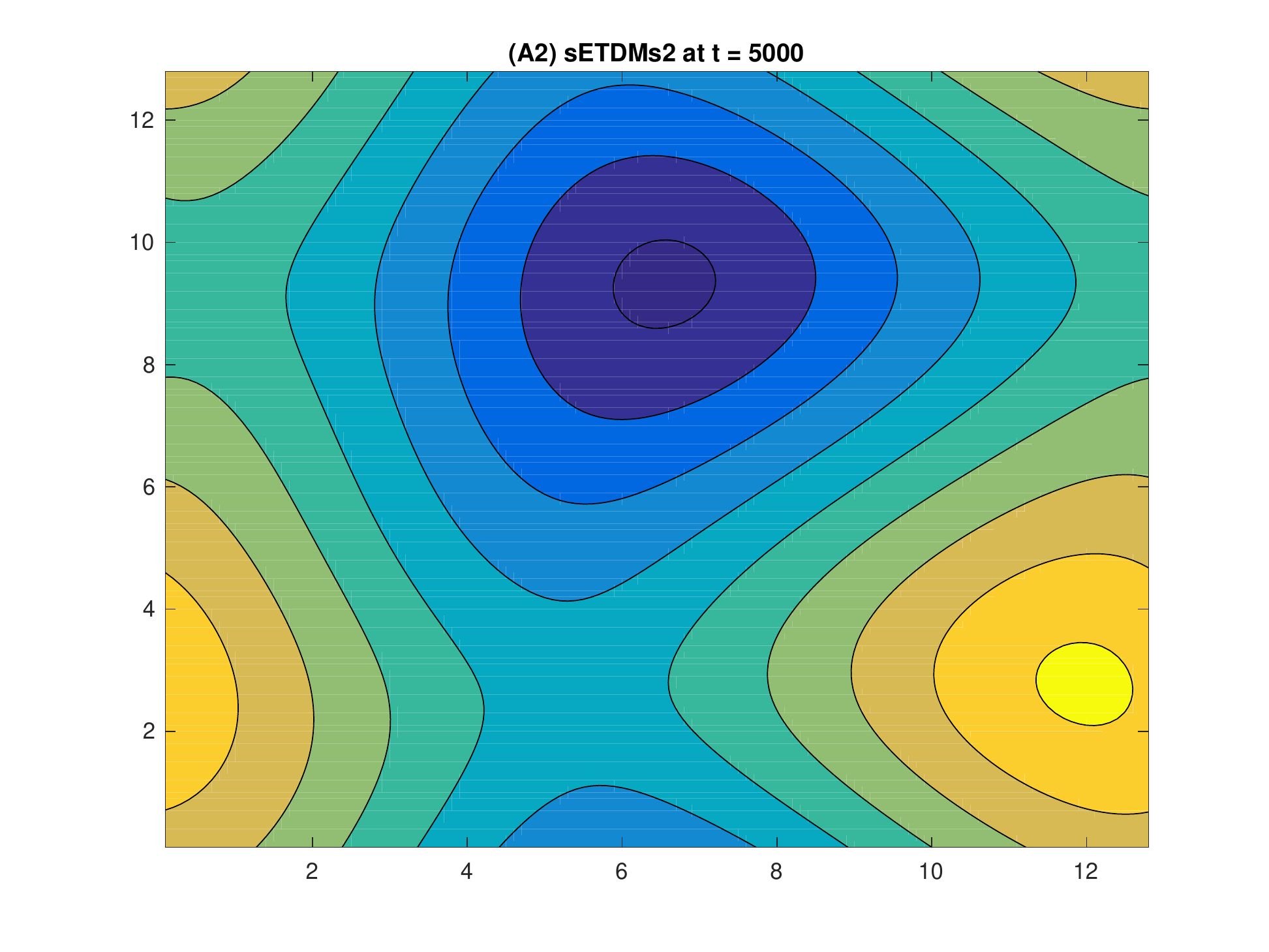}
  		\end{minipage}
 		\begin{minipage}{0.25\textwidth}
  	 	\includegraphics[width=\textwidth]{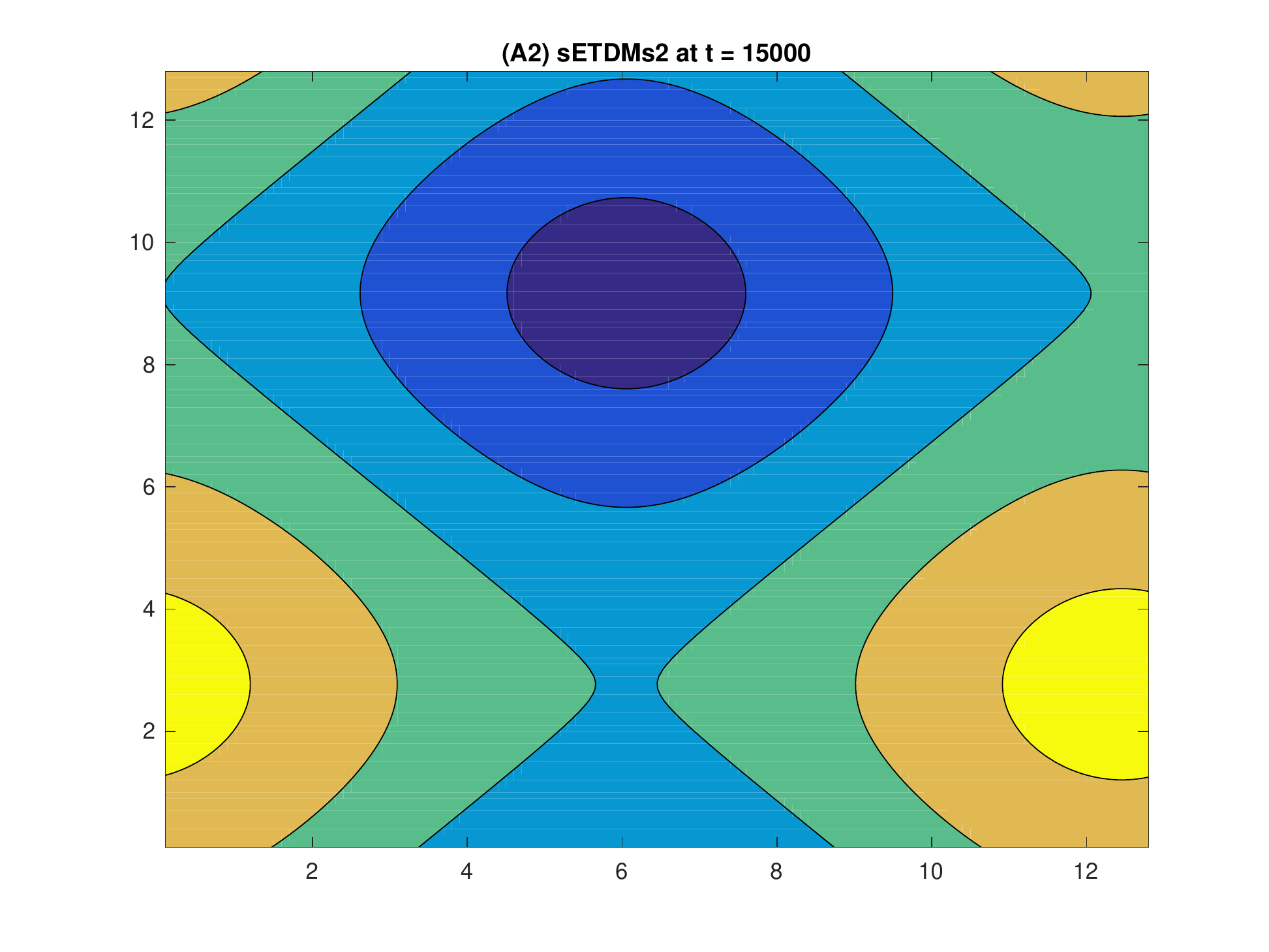}
		 \end{minipage}
	 }
	\noindent\makebox[\textwidth][c] {
  		\begin{minipage}{0.25\textwidth}
  	 	\includegraphics[width=\textwidth]{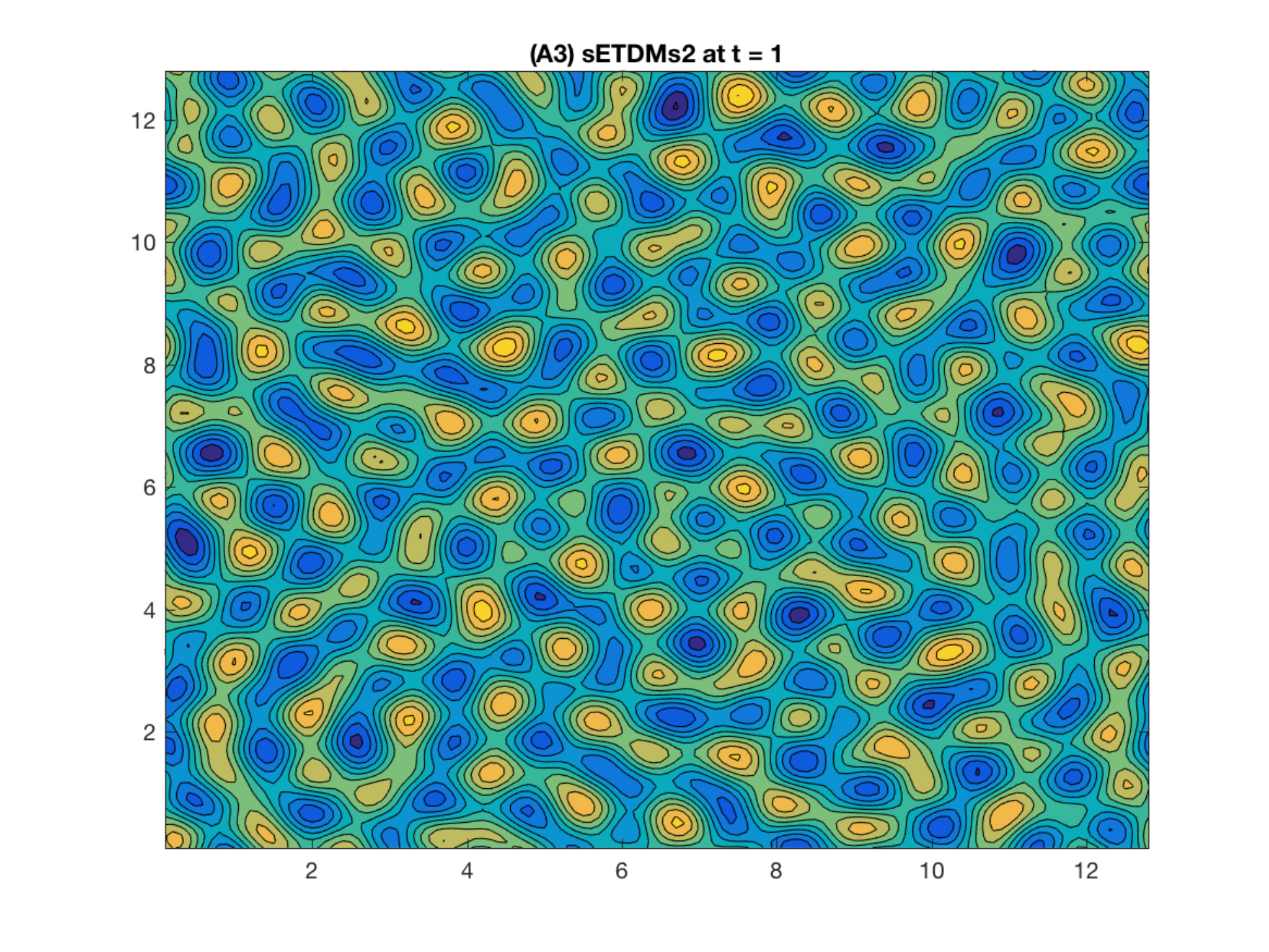}
  		\end{minipage}
 		\begin{minipage}{0.25\textwidth}
  	 	\includegraphics[width=\textwidth]{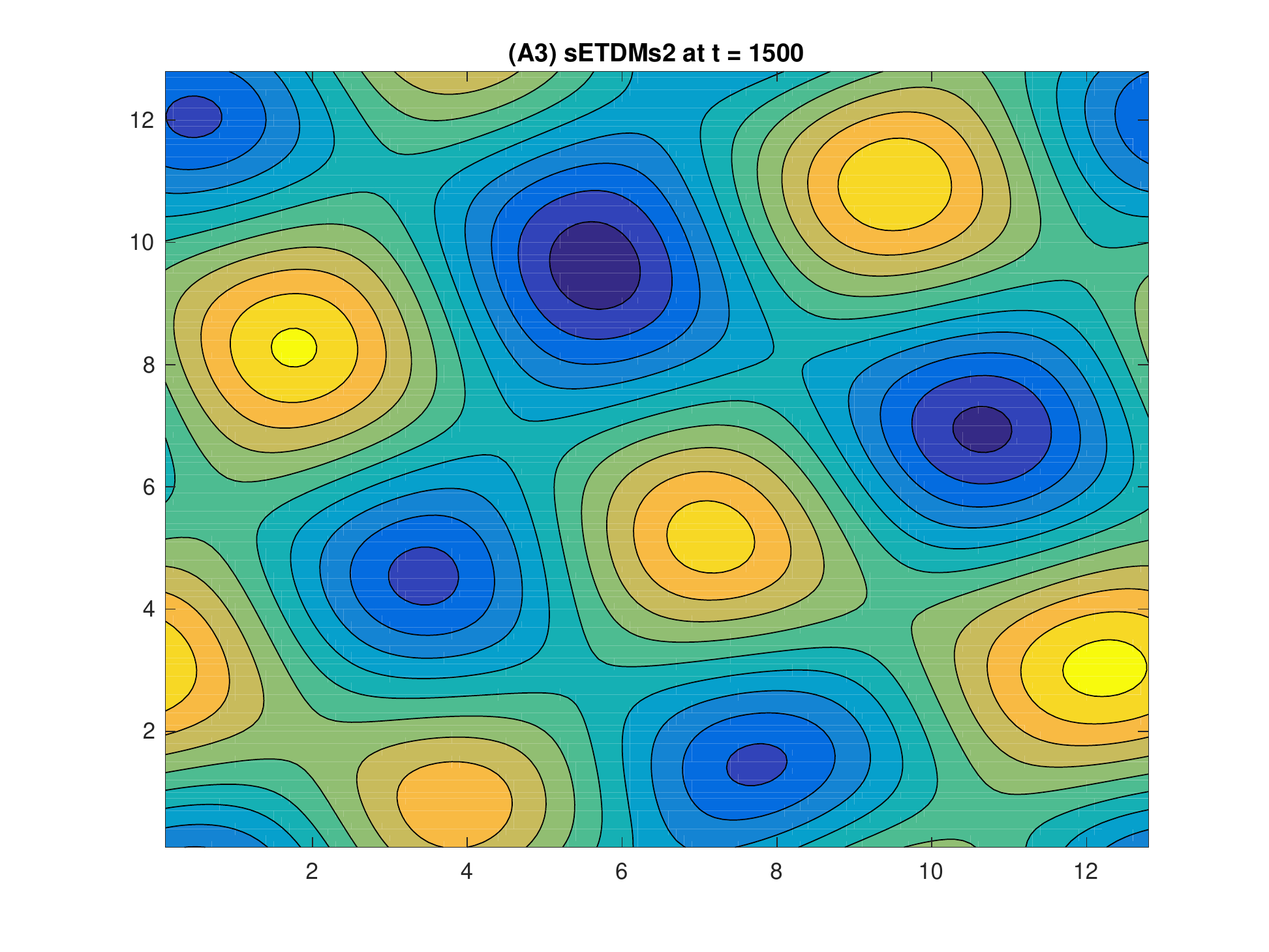}
		 \end{minipage}
  		\begin{minipage}{0.25\textwidth}
  	 	\includegraphics[width=\textwidth]{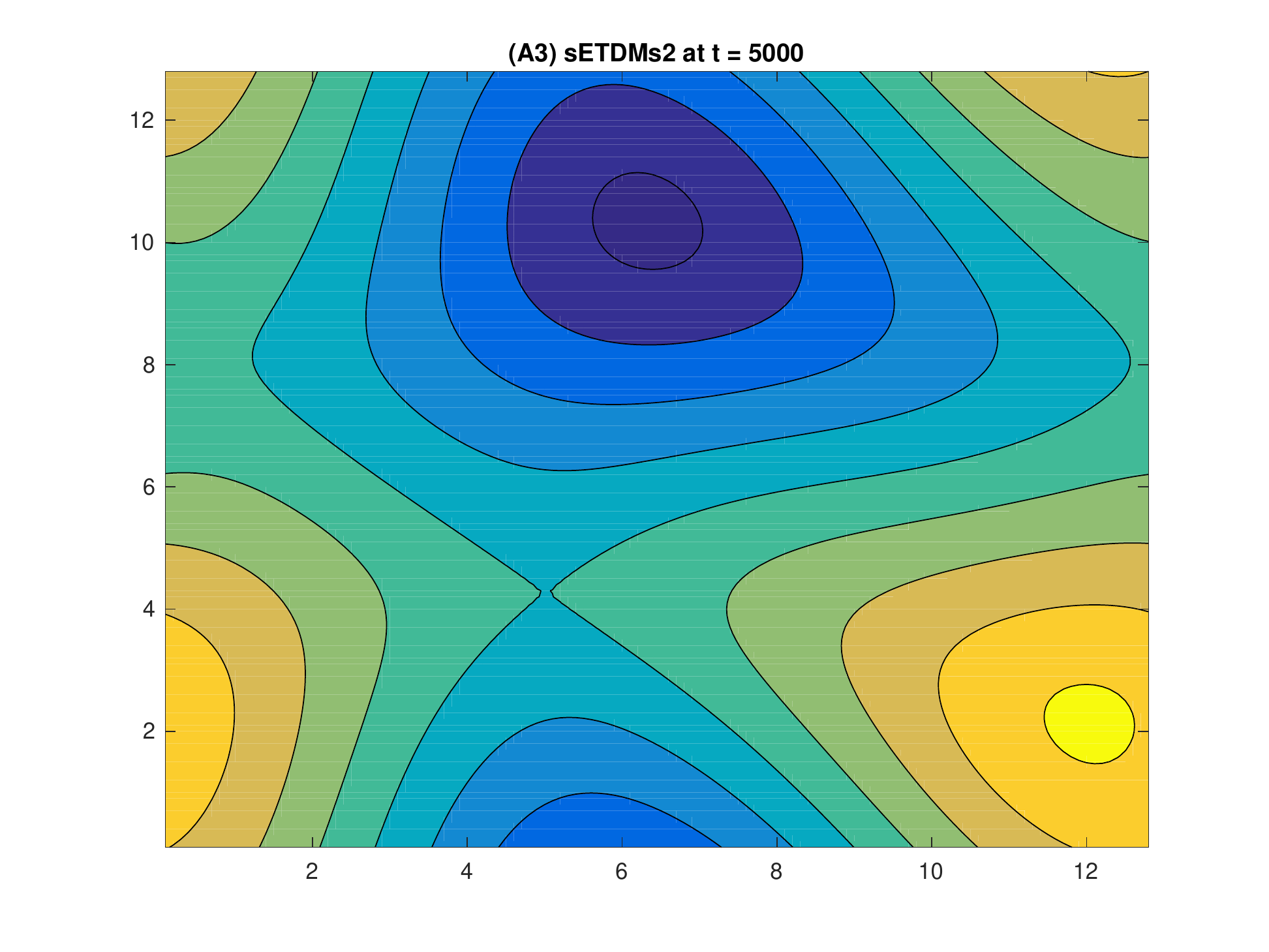}
  		\end{minipage}
 		\begin{minipage}{0.25\textwidth}
  	 	\includegraphics[width=\textwidth]{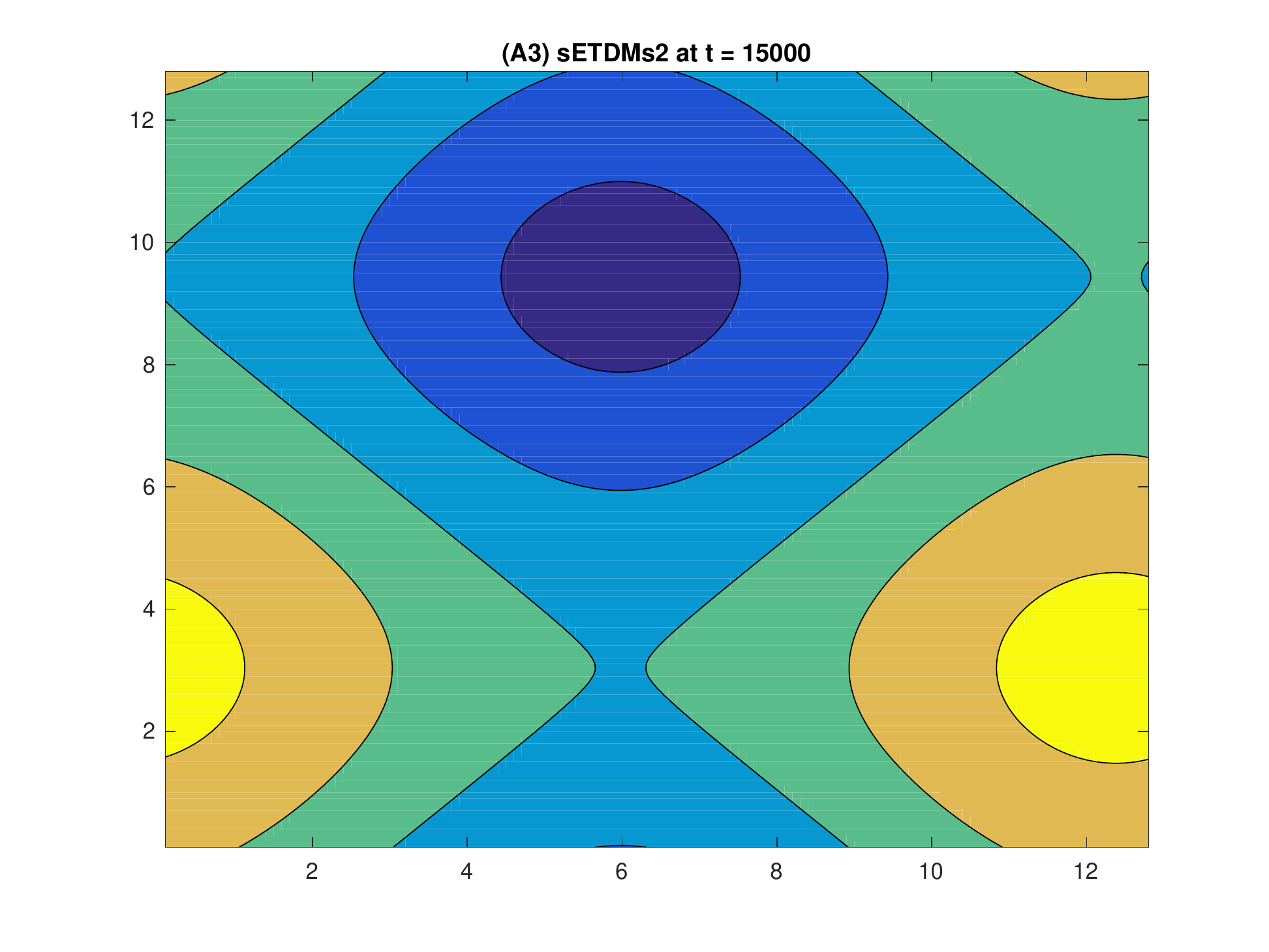}
		 \end{minipage}
	 }
\caption{Snapshots of the numerical solutions. The first row is for ETDMs2 and the next 3 rows are for sETDMs2 with $A = \frac{1}{8}, ~10^{-2}, ~\frac{2+\sqrt{3}}{6}$, respectively.}\label{fig: 2}
\end{figure}

Recall the discrete energy functional in \eqref{MBE energy: discrete}, for convenience we repeat it here:
\begin{equation}
E_{\mathcal{N}}(u)=\left(-\frac{1}{2}\ln(1+|\nabla_{\mathcal{N}}u|^2),1\right)_{\mathcal{N}} + \frac{\varepsilon^2}{2}\|\Delta_{\mathcal{N}} u\|_{\mathcal{N}}^2,~~~\forall u \in {\mathcal{M}}^{\mathcal{N}}.
\end{equation}
Also, consider the average surface roughness $h_{\mathcal{N}}(u)$ and the average slope $m_{\mathcal{N}}(u)$:
\begin{align}
h_{\mathcal{N}}(u,t) &= \sqrt{\frac{h^2}{|\Omega|}\sum_{\mathcal{M}^{\mathcal{N}}} |u(\textbf{x}_{i,j},t)-\bar{u}(t)|^2}, \quad \mbox{with} \quad \bar{u}(t):=\frac{h^2}{|\Omega|}\sum_{\mathcal{M}^{\mathcal{N}}} u(\textbf{x}_{i,j},t).\\
m_{\mathcal{N}}(u,t) &= \sqrt{\frac{h^2}{|\Omega|}\sum_{\mathcal{M}^{\mathcal{N}}} |\nabla u(\textbf{x}_{i,j},t)|^2}.
\end{align}
For the no-slope-selection growth model \eqref{MBE}, it has been proved that  $E_{\mathcal{N}} \sim O(-\ln (t))$, $h_{\mathcal{N}} \sim O(t^{\frac{1}{2}})$ and  $m_{\mathcal{N}} \sim O(t^{\frac{1}{4}})$ as $t\rightarrow\infty$. (See \cite{golubovic1997interfacial, li2003thin, li2004epitaxial} and references therein.) The evolution of $E_{\mathcal{N}}$ is demonstrated in  Figure~\ref{fig: 3}, while $h_{\mathcal{N}}$ and $m_{\mathcal{N}}$ in Figure~\ref{fig: 4}. The linear fitting results are also presented in Figure~\ref{fig: 3} and Figure~\ref{fig: 4}  for the solution of sETDMs2 in time interval $[1, 1000]$, of which the fitting coefficients are displayed in Table~\ref{tab: 1}. As can be seen in Table~\ref{tab: 1}, different choices of $A$ have little impact on the long time characteristics of the numerical solutions.
\begin{table}[ht]
\centering
\caption{Fitting coefficients of long time characteristics for sETDMs2.}\label{tab: 1}
\begin{tabular}{|c|c|c|c|c|c|c|}
\hline
 &  \multicolumn{2}{|c|}{$E_{\mathcal{N}}(t) \approx a \ln (t) + b$}	& \multicolumn{2}{|c|}{$h_{\mathcal{N}}(t) \approx a t^b$}	& \multicolumn{2}{|c|}{$m_{\mathcal{N}}(t) \approx  a t^b$} \\
\cline{2-7}
 														& $a$		& $b$		& $a$		& $b$		& $a$	& $b$	\\
\hline
$A = 1/8$										& -39.36	& -54.36	& 0.4372	& 0.4867	& 2.133	& 0.2614	\\
\hline
$A = 1/100$									& -39.38	& -54.27	& 0.4377	& 0.4865	& 2.132	& 0.2614	\\
\hline
 $A = \frac{2+\sqrt{3}}{6}$ 	& -39.26	& -54.77	& 0.4346	& 0.4879	& 2.137	& 0.2609	\\
\hline
\end{tabular}
\end{table}

\begin{figure}[ht]
\begin{center}
\includegraphics[width =0.8\textwidth]{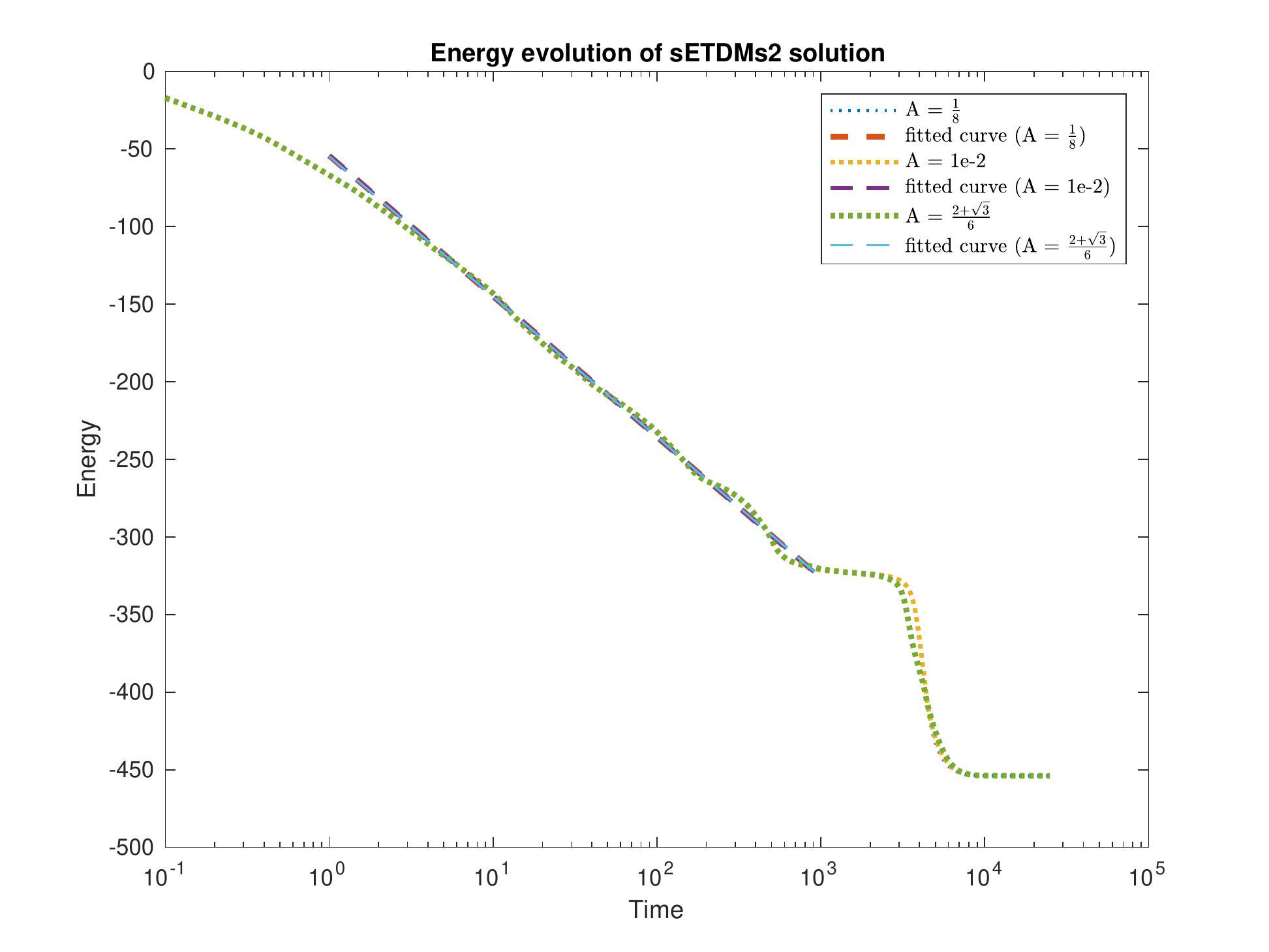}
\caption{Semi-log plot of the energy $E_{\mathcal{N}}$ with $\varepsilon^2 = 0.005$ of sETDMs2 with $A = 	\frac{1}{8}, ~ 10^{-2}, ~\frac{2+\sqrt{3}}{6}$. Fitted line has the form $a \ln (t) + b$, with coefficients shown in Table~\ref{tab: 1}.} \label{fig: 3}
\end{center}
\end{figure}

\begin{figure}[ht]
\begin{center}
\includegraphics[width=0.4\textwidth]{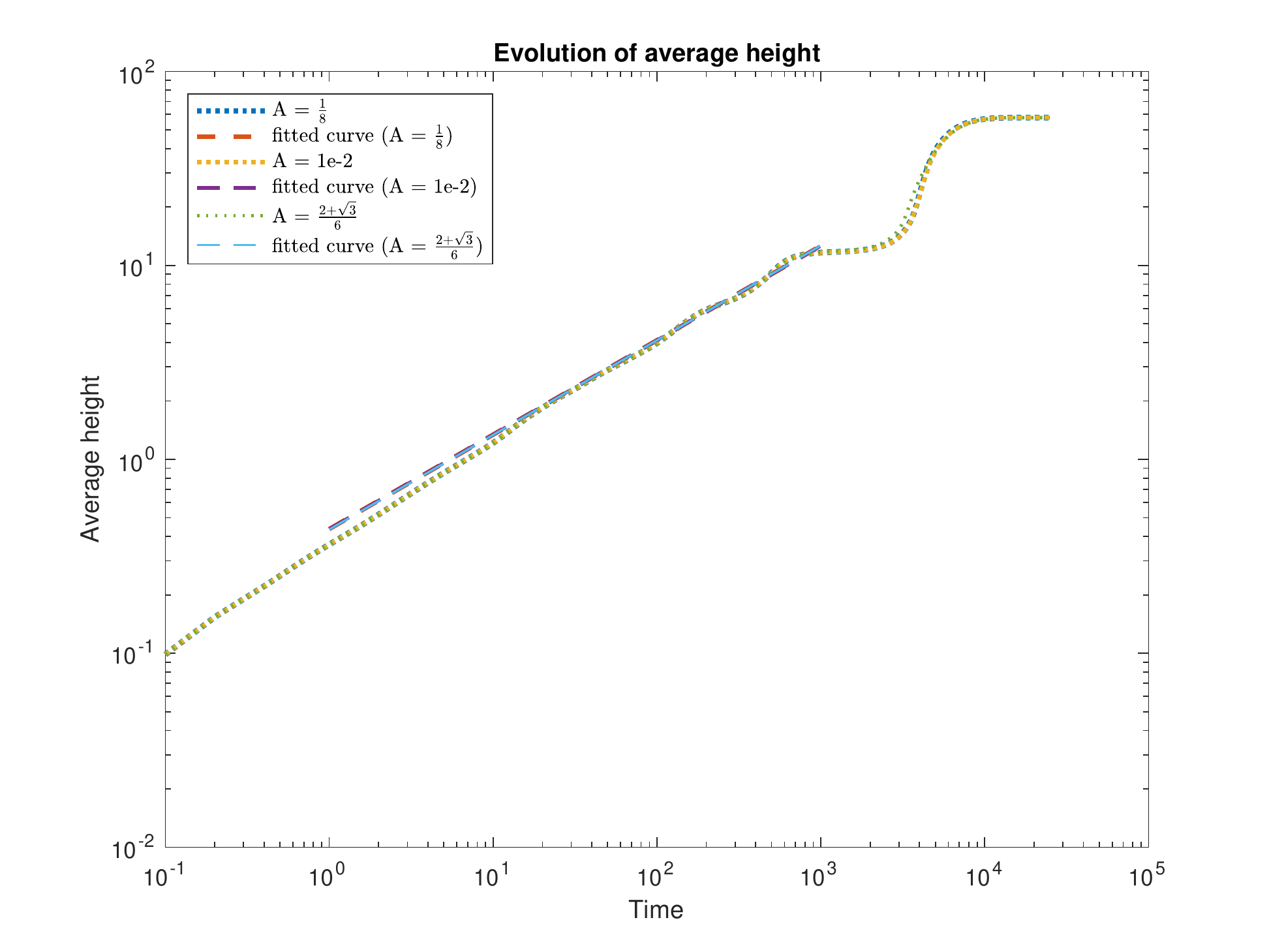}\;
\includegraphics[width=0.4\textwidth]{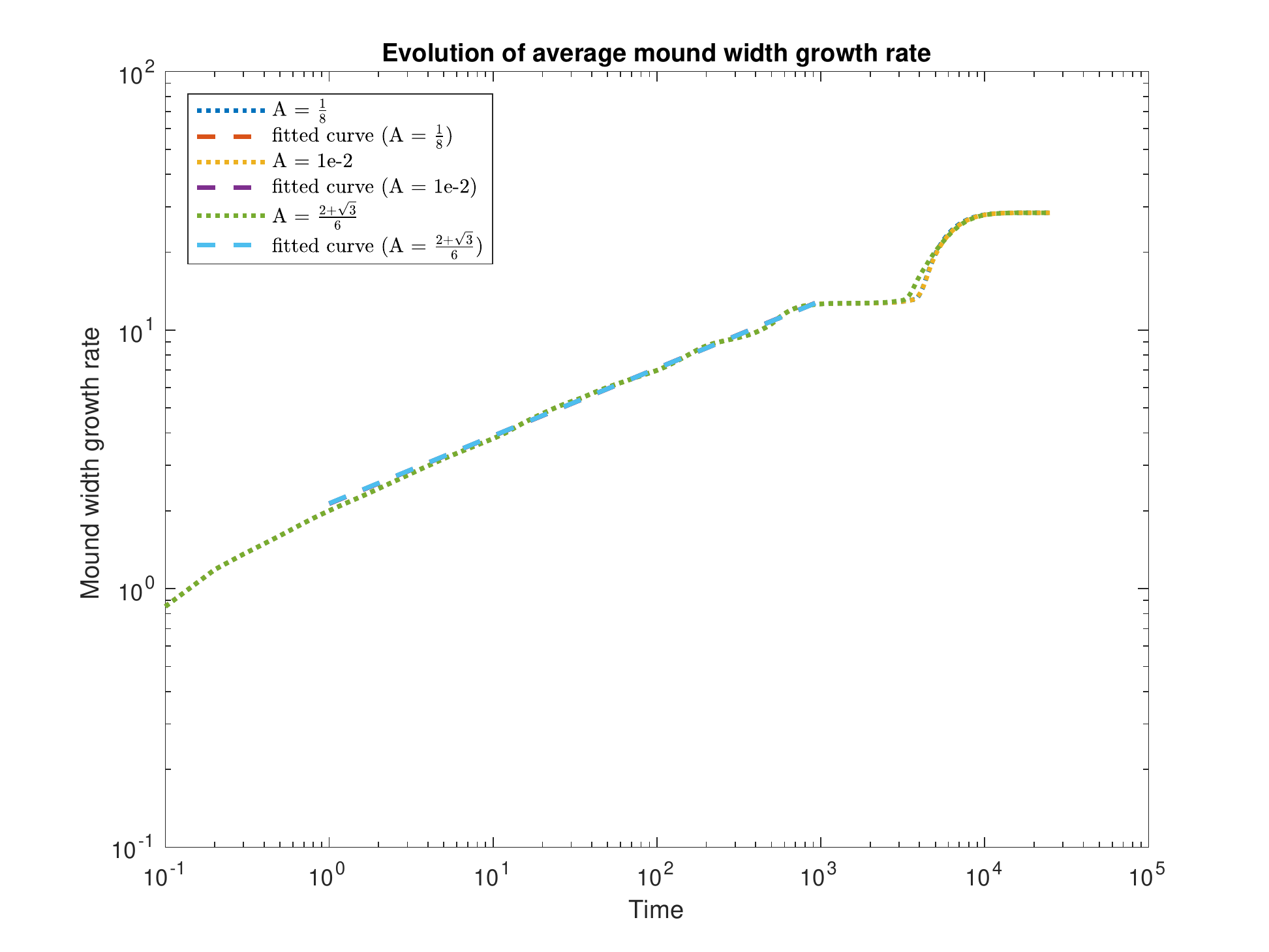}
\caption{The log-log plot of (1) the average surface roughness $h_{\mathcal{N}}$ and (2) the average slope $m_{\mathcal{N}}$ using sETDMs2 with $\varepsilon^2 = 0.005$, $A = \frac{1}{8}, ~ 10^{-2}, ~\frac{2+\sqrt{3}}{6}$. Fitted lines have the form $a t^b$, with coefficients shown in Table~\ref{tab: 1}.} \label{fig: 4}
\end{center}
\end{figure}

The numerical results displayed in Table~\ref{tab: 1} have demonstrated the robustness of the energy stable numerical solvers in the long time scale simulations. With the time step size of order $10^{-3}$ to $10^{-2}$, and such a small interface width parameter $\varepsilon$, the numerical solver is able to capture the long time asymptotic growth rate of the surface roughness and average slope, with relative accuracy within $4\%$. Therefore, a second order accurate, energy stable numerical algorithm is a worthwhile approach for a gradient flow with a long time scale coarsening process.

\section{Concluding remarks} \label{sec:conclusion}

In this paper we have presented a linear, second order in time accurate, energy stable ETD-based scheme for a thin film model without slope selection. An unconditional long time energy stability is justified at a theoretical level. In addition, we provide an $O(\tau^2)$-order convergence analysis in the $\ell^{\infty}(0,T; \ell^2)$ norm, when $\tau \le \frac18$. Moreover, various numerical experiments have demonstrated that the proposed second-order scheme is able to produce accurate long time numerical results with a reasonable computational cost.

	\section*{Acknowledgment}
This work is supported in part by the grants NSFC 11671098, 91630309, a 111 project B08018 (W.~Chen), NSF DMS-1418689 (C.~Wang), NSF DMS-1715504 and Fudan University start-up (X.~Wang). C.~Wang also thanks the Key Laboratory of Mathematics for Nonlinear Sciences, 
Fudan University, for support during his visit.

\bibliographystyle{amsplain}

\end{document}